\def\id{\operatorname{id}}
\def\g{\gamma}
\def\ep{\epsilon}
\def\e{\epsilon}
\def\DGGamma{\operatorname{DG}\Gamma}
\def\DG{\operatorname{DG}}
\def\comult{\operatorname{comult}}
\def\proj{\operatorname{proj}}
\def\NNY{\lambda}
\def \ms{\medskip}
\def \bs{\bigskip}
\def\bw{\bigwedge}
\def\mfp{\mathfrak p}
\def\la{\langle}
\def\ra{\rangle}
\def\rank{\operatorname{rank}}
\def\p{\oplus}
\def\im{\operatorname{im}}
\def\m{\mathfrak m}
\def\a{\alpha}
\def\Hom{\operatorname{Hom}}
\def\kk {\pmb k}
\def\codim{\operatorname{codim}}
\def\Tor{\operatorname{Tor}}
\def\t{\otimes}
\def\w{\wedge}
\def\HH{\operatorname{H}}
\def\ts{\textstyle}
\def\Ass{\operatorname{Ass}}
\def\mfq{\mathfrak q}
\newtheorem{theorem}{Theorem}[section]
\newtheorem{lemma}[theorem]{Lemma}
\newtheorem{corollary}[theorem]{Corollary}
\newtheorem{proposition}[theorem]{Proposition}
\newtheorem{proposition-no-advance}[equation]{Proposition}
\newtheorem{claim-no-advance}[equation]{Claim}
\newtheorem{observation}[theorem]{Observation}
\newtheorem{quick consequences}[theorem]{Quick Consequences}
\theoremstyle{definition}
\newtheorem{definition-no-advance}[equation]{Definition}
\newtheorem{facts and definitions}[theorem]{Facts and Definitions}
\newtheorem{definition}[theorem]{Definition}
\newtheorem{remark}[theorem]{Remark}
\newtheorem{remark-no-advance}[equation]{Remark}
\newtheorem{remarks-no-advance}[equation]{Remarks}
\newtheorem{data}[theorem]{Data}
\newtheorem{conventions}[theorem]{Conventions}
\newtheorem{careful calculation}[theorem]{Careful Calculation}
\newtheorem{present summary}[theorem]{Present Summary}
\newtheorem{example}[theorem]{Example}
\newtheorem{further reductions}[theorem]{Further Reductions}
\newtheorem{chunk}[theorem]{}
\newtheorem{chunk-no-advance}[equation]{}
\newtheorem{marching orders}[theorem]{Marching Orders}
\newtheorem{circle the wagons}[theorem]{Circle the wagons}
\newtheorem*{Remark}{Remark}
\numberwithin{equation}{theorem}
\numberwithin{table}{theorem}
\begin{document}

\baselineskip=16pt

\title[Resolutions of length four which are Differential Graded Algebras]
{Resolutions of length four which are Differential Graded Algebras}

\date{\today}

\author[A.~R.~Kustin]{Andrew R.~Kustin}
\address{Andrew R.~Kustin\\ Department of Mathematics\\ University of South Carolina\\\newline 
Columbia\\ SC 29208\\ U.S.A.} \email{kustin@math.sc.edu}

\subjclass[2010]{13D02, 16E45}

\keywords{Differential Graded Algebras, Poincar\'e duality}

\thanks{}

\begin{abstract}   Let $P$ be a commutative Noetherian ring and $F$ be a self-dual acyclic complex of finitely generated free $P$-modules. Assume that $F$ has length four and $F_0$ has rank one. We prove  that $F$ can be given the structure of a Differential Graded Algebra with Divided Powers; furthermore, the multiplication on $F$ exhibits Poincar\'e duality. This result is already known if $P$ is a local Gorenstein ring and $F$ is a minimal resolution. The purpose of the present paper is to remove the unnecessary hypotheses that $P$ is local, $P$ is Gorenstein, and $F$ is minimal.
\end{abstract}

\maketitle

\section{Introduction.}

Let $P$ be a commutative Noetherian ring and $F$ be a self-dual acyclic complex of finitely generated free $P$-modules. Assume that $F$ has length four and $F_0$ has rank one. In Theorem~\ref{main} we prove  that $F$ can be given the structure of a Differential Graded Algebra with Divided Powers; furthermore, the multiplication on $F$ exhibits Poincar\'e duality. This result is already known \cite{KM80,K87} if $P$ is a local Gorenstein ring and $F$ is a minimal resolution. The purpose of the present paper is to remove the unnecessary hypotheses that $P$ is local, $P$ is Gorenstein, and $F$ is minimal.

Tate \cite{T57} introduced  Differential Graded ($\DG$) Algebras into commutative algebra. He 
exhibited a $\DG$-resolution  
of the residue class field $\kk=P/\m$ 
 when $P$ is a local Noetherian ring  
with maximal ideal $\m$. Gulliksen \cite{G68} later proved that Tate's resolution is a minimal resolution. Buchsbaum and Eisenbud \cite{BE77} proved that every finite free resolution $F$ of length at most three and $\rank F_0=1$ is a $\DG$-algebra and they used this fact in their classification of grade three Gorenstein ideals.  

The study of $\DG$-algebra resolutions  was significantly motivated by  
the work of Avramov.   
Let $A$ be the ring $P/I$, where $I$ is an ideal in a regular local ring $(P,\m,\kk)$, and let $F$ be the minimal resolution of $A$ by free $P$-modules. 
If $F$ is a DG-algebra, then  Avramov \cite[Cor.~3.3]{Av78}  proved that the Eilenberg-Moore spectral sequence $E^2_{p,q}=\Tor^{\HH(K^A)}_{p,q}(\kk,\kk)\implies \Tor^{K^A}_{p+q}(\kk,\kk)$ degenerates, where $K^A$ is the Koszul complex associated to a minimal generating set of the maximal ideal of $A$. When this happens many questions about the ring $A$ may be translated into questions about the Koszul homology algebra $T=\HH(K^A)=\Tor^P(A,\kk)$. In particular, the Poincar\'e series of $A$ may be expressed in terms of the Poincar\'e series of $T$. The algebra $T$, although graded-commutative instead of commutative, is in many ways simpler than the original algebra $A$. For example, $T$ is always a finite-dimensional vector space. This philosophy has led to some striking theorems in the case when $A$ has small codimension or small linking number: the Poincar\'e series of finitely generated $A$-modules have been calculated  \cite{AKM}, the asymptotics of the Betti numbers   of 
 finitely generated $A$-modules have been determined \cite{Av89}, and the Bass series of finitely generated $A$-modules have been found \cite{Av12}. Consequences of this technique continue to be found.  The paper \cite{AISS15} uses $\DG$-algebra techniques to guarantee that 
if $A$ has small codimension or small linking number
and 
$M$ and $N$ are finitely generated $A$-modules with $\Tor^A_i(M,N)=0$ for all large $i$, then $M$ or $N$ has finite projective dimension.  The techniques of $\DG$-algebras 
 are used in \cite{AISSII} to 
prove that local rings of small codimension or small linking number which are not Gorenstein and are not embedded deformations are $G$-regular in the sense that every totally reflexive module over such a ring is free.

The previously mentioned applications of $\DG$-techniques apply especially to local rings and minimal resolutions. On the other hand, $\DG$-techniques continue to be interesting when the ring is not local or the resolution is not minimal. Consider two ideals $J\subseteq I$ in the ring $P$, where $J$ is generated by a regular sequence and a convenient resolution $F$ of $P/I$ is a $\DG$-algebra. In order to resolve the linked ideal $J:I$, one studies the mapping cone of the map of complexes from the Koszul complex which  resolves  
$P/J$ to $F$. One has great control of this map of complexes when one makes it a map of $\DG$-algebras. This observation is used in \cite{Ku19} to create matrix factorizations.

The Taylor resolution of a  
ring
 defined by a monomial ideal is a $\DG$-algebra (see, for example, \cite[2.1]{Av81}), but is usually not minimal. 
 Up-to-date information about $\DG$-structures on resolutions of rings defined by monomial ideals can be found in \cite{Ka19}. 
In particular, Katth\"an has proven that
there exists a monomial ideal whose hull resolution \cite{BS98} does not admit a $\DG$-structure;
there exists a monomial ideal whose Lyubeznik resolution \cite{L88} does not admit a $\DG$-structure;
and there exists a  
 monomial ideal whose minimal free resolution is supported on a simplicial complex, 
 in the sense of \cite[Construction 2.1]{BPS98}, nonetheless the minimal free resolution 
does not admit a $\DG$-structure.
On the other hand, Katth\"an has proven \cite[Thm.~2.1]{Ka19} that
in many situations  
if $F$ is a
 resolution 
of the cyclic module $P/J$,
 then it is enough to modify the first map of $F$ in order to ensure that the modified resolution, $F'$, admits the structure of a $\DG$-algebra. (In particular, $F'$ is a resolution of $P/(f)J$ for some regular element $f$ of $P$.)

   Examples of minimal resolutions which do not support a $\DG$-structure have been found by V. A. Khinich \cite[Appendix]{Av74},
 Avramov \cite[Ex.~2.2]{Av81},
 and Srinivasan \cite{Sr92} and \cite{Sr96}.

Roughly speaking, there are three ways to put a $\DG$-structure on a $P$-free resolution $F$ of a cyclic $P$-module $A=P/I$. The first approach is to observe that $F$ always has a multiplication which satisfies all of the $\DG$ axioms, except it is associative only up to homotopy. If sufficient additional hypotheses are imposed, then every choice of homotopy-associative multiplication is, in fact, associative. This approach works in \cite{BE77} when $\codim (A)\le3$,
and in \cite{Sr94} when $F$ is a graded resolution whose grading satisfies the inequality $$s_{a_j}+s_{b_k}+s_{c_\ell}<s_{(a+b+c+1)_i},$$ for all $a$, $b$, $c$, $i$, $j$, $k$, and $\ell$, where $F_a=\bigoplus_jP(-s_{a_j})$.
 
The second approach is to prove that if $F$ is sufficiently short, then a homotopy-associative multiplication can be modified in order to become associative ``on the nose''. This is the approach in \cite{KM80,K87}
 for Gorenstein local rings A of codimension four; 
and in \cite{Pa90p,Ku94-aci} 
 for local codimension four almost complete intersection rings $A$. 

 The third approach is to record an explicit multiplication table for $F$ and show that it satisfies all of the relevant axioms. This approach works if $A$ is a complete intersection (in this case, the resolution $F$ is an exterior algebra); if $A$ is defined by a monomial ideal (in this case $F$ is the Taylor resolution); if $A$ is one link from a complete intersection \cite{AKM}; if $A$ is a  
 Gorenstein ring two links from a complete intersection
\cite{KM83}; if $A$ is a   ring defined by the maximal minors of a matrix in equicharacteristic zero \cite{Sr89} (see also \cite{Ma}); if $A$ is 
a Gorenstein ring defined by a Huneke-Ulrich deviation two ideal \cite{Sr91} (see also \cite[Thm.~2.4]{Ku94}); or if $A$ is a Huneke-Ulrich almost complete intersection 
\cite{Ku95}.
 
\ms 
Our proof of the main result uses the second approach.
An outline of the proof follows. In this discussion, $F$ is the resolution
$$0\to F_4\to F_3\to F_2\to F_1\to F_0=P$$and $x_i$ and $y_i$ are elements of $F_i$.

In 
Lemma~\ref{Mar-12}
 we identify maps $\psi_3: F_1\t F_3\to F_4$ and $\psi_4: D_2F_2 \to F_4$ such that 
\begin{align}
&\text{$\psi_3$ and $\psi_4$ 
satisfy
 the product rule for $0=x_1\cdot x_4$ and $0=x_2\cdot x_3$}\label{1.a}\\ \intertext{and}
&\text{the induced maps $F_i\to\Hom_P(F_{4-i},F_4)$ are isomorphisms,}\label{1.b}
\end{align}
for $1\le i\le 3$.
It is not difficult to obtain (\ref{1.a}). If $P$ is a local ring, 
 $F$ is a minimal resolution, and (\ref{1.a}) holds,  then (\ref{1.b}) holds automatically; however, in the general situation, maps which satisfy (\ref{1.a}) must be modified in order to ensure that they also satisfy (\ref{1.b}).

Once (\ref{1.a}) and (\ref{1.b}) are obtained, we make no further modification of the maps $\psi_3$ and $\psi_4$. We place all of the responsibility for creating a multiplication on our ability to make an appropriate choice for
$\Psi_1: F_1\t F_1\to F_2$.  That is, we define $\Psi_2:F_1\t F_2\to F_3$ in terms of $\Psi_1$, $\psi_3$, and $\psi_4$ by using the requirement that the multiplication must associate: $y_1(x_1x_2)=(y_1x_1)x_2$. Hence, we define $\Psi_2$ to be the homomorphism which satisfies 
$$\psi_3(y_1\t \Psi_2(x_1\t x_2))=\psi_4(\Psi_1(y_1\t x_1)\cdot x_2),$$
for $x_i$ and $y_i$ in $F_i$. 
This definition makes sense because of (\ref{1.b}). 
 
The map $\Psi_1$ must satisfy 3 hypotheses:
\begin{align}
 &\text{$\Psi_1$ must satisfy one differential condition 
pertaining to 
$F_1\t F_1 \to F_2$,}\label{2.a}\\
&\text{$\Psi_1$ must satisfy one differential condition pertaining to $F_1\t F_2 \to F_3$,}\label{2.b} \intertext{and}
&\text{$\Psi_1$ must factor through $\ts\bw^2F_1$.}\label{2.c}
\end{align}
The precise statement of (\ref{2.a}) -- (\ref{2.c}) is given
 as Lemma~\ref{mult-table}; the proof that once these conditions are satisfied, then $\Psi_1$, $\Psi_2$, $\psi_3$, and $\psi_4$ give $F$ the structure of a $\DGGamma$-algebra with Poincar\'e duality 
is given in Section~\ref{prove-mult-table}.

The rest of the paper is devoted to obtaining a map $\Psi_1$ which satisfies (\ref{2.a}), (\ref{2.b}), and (\ref{2.c}). It is not difficult to find a map $\psi_1$ which satisfies (\ref{2.a}) and (\ref{2.b}). We make successive modifications of the original $\psi_1$ until we obtain a map $\Psi_1$ which satisfies all three conditions. The modifications in \cite{KM80} involved division by $2$; and the modifications in \cite{K87} involved division by $3$. In a local ring either two or three is a unit; and therefore, at least one of these two approaches is appropriate in any local ring. However, the ring $P$ in the present paper is not necessarily local. Instead of dividing by $2$ or $3$, we multiply by $2$ or $3$. In Section~\ref{prove-3} we obtain a map $\Psi_{1,3}$ which satisfies (\ref{2.c}) and which would satisfy (\ref{2.a}) and (\ref{2.b}) except the answer is $3$ times the desired answer. In Section~\ref{Inductive step.}, we obtain a map $\Psi_{1,2}$ which satisfies (\ref{2.c}) and which would satisfy (\ref{2.a}) and (\ref{2.b}) except the answer is $2^n$ times the desired answer, for some non-negative  integer $n$. To complete the proof of Theorem~\ref{main} we take $\Psi_1$ to be  an 
 appropriate integral linear combination of $\Psi_{1,2}$ and $\Psi_{1,3}$. 

The modifications which produce $\Psi_{1,3}$ and $\Psi_{1,2}$ are fundamentally different.  Lemma~\ref{Mar-12} produces maps $\psi_3$ and $\psi_4$ which satisfy (\ref{1.a}) and (\ref{1.b}). Lemma~\ref{Mar-12} also produces maps $\psi_1^\dagger$ and $\psi_2^\dagger$ so that $\psi_1^\dagger$, $\psi_2^\dagger$, $\psi_3$, and $\psi_4$ give $F$ the structure of an algebra which would be a $\DG$-algebra except the multiplication is homotopy-associative instead of being associative. The map $\Psi_{1,3}$ is obtained  by making one modification to the map $\psi_1^\dagger$ from Lemma~\ref{Mar-12}; this modification makes significant use of the map  $\psi_2^\dagger$ from Lemma~\ref{Mar-12}.

On the other hand, the construction of $\Psi_{1,2}$ completely ignores the maps $\psi_1^\dagger$ and $\psi_2^\dagger$ of Lemma~\ref{Mar-12}. Instead, a sequence of maps $\psi_1^{\la i\ra}:F_1\t F_1\to F_2$ is built. Each $\psi_1^{\la i\ra}$ 
would satisfy (\ref{2.a}) and (\ref{2.b}) except the answer is $2^{n_i}$ times the desired answer, for some non-negative  integer $n_i$; furthermore, for each $i$, there is a free summand $G_i$ of $F_1$ of rank $i$ such that the restriction of $\psi_1^{\la i\ra}$ to $G_i\t G_i$ factors through $\bw^2 G_i$. The initial map $\psi_1^{\la 0\ra}$ is obtained by way of a diagram chase in the double complex $\Hom_P(F_1\t F\t F,F_4)$, see Observation~\ref{Y22.5}. The modification of $\psi_1^{\la i-1\ra}$ into $\psi^{\la i\ra}$ takes place in Proposition~\ref{prop29.5}. The calculations which are used to prove Proposition~\ref{prop29.5} are contained in Section~\ref{Properties of the data.}. 

\tableofcontents

\section{Notation, conventions, and preliminary results.}\label{Prelims}

\begin{conventions}
\label{2.1} Let $P$ be a commutative Noetherian ring,
  $X$ be a free $P$-module and $Y$ be a $P$-module. 
\begin{enumerate}[\rm(a)]
\item \label{2.1.a}The
rules for a divided power algebra $D_\bullet X$ are recorded in \cite[Def.~1.7.1]{GL}  or  \cite[Appendix 2]{Ei95}.
(In practice these rules say that $x^{
(a)}$ behaves like $x^
a/(a!)$ would behave if $a!$ were
a unit in $P$.)

If $x$ and $x'$ are elements of $X$, then $x\cdot x'=x'\cdot x$ in $D_2(X)$. The co-multiplication homomorphism $$\comult:D_2X\to X\t_PX$$ sends $x^{(2)}$ to $x\t x$ and sends $x\cdot x'$ to $x\t x'+x'\t x$, for $x,x'$ in $X$. Often we will describe a homomorphism $\phi: D_2X\to Y$ by giving the value of $\phi(x^{(2)})$ for each $x\in X$. One then automatically knows the value of $\phi(x\cdot x')$, for $x,x'\in X$ because $$(x+x')^{(2)}=x^{(2)}+x\cdot x'+{x'}^{(2)}.$$ 
\item Let $T_nX$ represent $\underbrace{X\t\cdots \t X}_{\text{$n$ times}}$.
\item\label{2.1.c} A homomorphism $\phi:T_2X\to Y$ is called {\it alternating} if the composition
$$D_2X\xrightarrow{\comult}T_2X\xrightarrow{\phi}Y$$ is identically zero. In particular, $\phi$ is an alternating map if and only if $\phi$ factors through the natural quotient map $T_2X\to \bw^2X$.
\item If $I$ is a proper ideal of $P$, then the {\em grade} of $I$ is the length of the longest regular sequence in $I$ on $P$.
\item A complex $\cdots \to F_2\to F_1\to F_0\to 0$ is called {\it acyclic} if the only non-zero homology occurs in position zero.
\end{enumerate}
\end{conventions}

\begin{definition}
If $P$ is a ring and $A$, $B$, and $C$ are $P$-modules, then the $P$-module homomorphism $\phi: A\otimes_P B\to C$ is a {\it perfect pairing} if the induced $P$-module homomorphisms $A\to \mathrm{Hom}_P(B,C)$ and $B\to \mathrm{Hom}_P(A,C)$,  given by $a\mapsto \phi(a\otimes \underline{\phantom{X}})$ and $b\mapsto \phi(\underline{\phantom{X}}\otimes b)$, respectively, are isomorphisms.
\end{definition}

\begin{definition}
A {\it Differential Graded  
algebra} 
$F$ (written \underline{$\DG$-algebra}) over the commutative Noetherian ring $P$ is a complex of finitely generated free $P$-modules $(F, d)$:
$$\cdots 
\xrightarrow{d_2}F_1\xrightarrow{d_1}F_0=P,$$
together with a unitary, associative  multiplication $F\t_PF\to F$, which satisfies
\begin{enumerate}[\rm(a)]
\item $F_iF_j\subseteq F_{i+j}$,
\item $d_{i+j}(x_ix_j)=d_i(x_i)x_j+(-1)^i x_id_j(x_j)$,
\item $x_ix_j=(-1)^{ij}x_jx_i$, and
\item $x_i^2=0$, when  $i$ is odd,
\end{enumerate}
for  $x_{\ell}\in F_{\ell}$.
The $\DG$-algebra $F$ is called a \underline{$\DGGamma$-algebra} {\rm(}or a $\DG$-algebra with divided powers{\rm)} if, for each positive even index $i$ and each element $x_i$ of $F_i$, there is a family of elements $\{x_i^{(k)}\}$ which satisfy the divided power axioms of \ref{2.1}.(\ref{2.1.a}), and which also satisfy
$$d_{ik}(x_i^{(k)})=d_i(x_i) x_i^{(k-1)}.$$
The $\DG$-algebra $F$ 
 exhibits {\it Poincar\'e duality} if there there is an integer $m$ such that $F_i=0$ for $m< i$, $F_m$ is isomorphic to $P$, and for each integer $i$, the multiplication map $$F_i\t_{P} F_{m-i}\to F_m$$ is a perfect pairing of $P$-modules.
\end{definition}

\begin{example}The Koszul complex is the prototype of a $\DGGamma$-algebra which exhibits Poincar\'e duality. \end{example}

\section{Poincar\'e duality.}\label{PD}

\begin{data}\label{data30} Let $P$  
a commutative Noetherian ring and  
 $$F: \quad 0\to F_4\xrightarrow{d_4}
F_3\xrightarrow{d_3}F_2\xrightarrow{d_2}F_1\xrightarrow{d_1}F_0=P$$
be a length four resolution of a cyclic $P$-module 
by finitely generated free $P$ modules. Assume that $F_4$ has rank one.  Let $(-)^\vee$ denote the functor $\Hom_P(-,F_4)$. Assume also that the complexes $F$ and $F^\vee$ are isomorphic. \end{data}

\bigskip

The main result in the paper,  
Theorem~\ref{main}, 
 states that the resolution $F$ of Data~\ref{data30} is a $\DGGamma$-algebra 
 which exhibits Poincar\'e duality. In the present section, we focus on the Poincar\'e duality. We identify perfect pairings $$F_1\t_P F_3\to F_4\quad\text{and}\quad F_2\t_P F_2\to F_4$$which interact well with the differential of $F$.  The goal 
in the present section 
is to prove Lemma~\ref{Mar-12}. 

The most important feature of Lemma~\ref{Mar-12} is the homomorphisms
$\psi_3$ and $\psi_4$ which satisfy the differential properties (\ref{A.2}) and (\ref{A.3}) and which induce the perfect pairings of (\ref{perfectp}). 

The multiplication table which makes $F$ become a $\DGGamma$-algebra is given in Lemma~\ref{mult-table}. The maps $\psi_3$ and $\psi_4$ which appear in Lemma~\ref{mult-table} are directly imported from Lemma~\ref{Mar-12}, with no change. The multiplication $\Psi_1$ and $\Psi_2$ of Lemma~\ref{mult-table} are {\bf not the same} as the maps $\psi_1^\dagger$ and $\psi_2^\dagger$ of Lemma~\ref{Mar-12}. Indeed, in Sections~\ref{base case},
 \ref{Properties of the data.}, and
\ref{Inductive step.}, we completely ignore the maps $\psi_1^\dagger$ and $\psi_2^\dagger$ of Lemma~\ref{Mar-12}  while we construct a suitable map $\Psi_{1,2}$, as described in Lemma~\ref{2^n}. In Section~\ref{prove-3}, we modify the maps $\psi_1^\dagger$ and $\psi_2^\dagger$ from Lemma~\ref{Mar-12} in order to produce a suitable map $\Psi_{1,3}$, as describe in Lemma~\ref{3}. The ultimate map $\Psi_1$ of Lemma~\ref{mult-table} is a linear combination of $\Psi_{1,2}$ and $\Psi_{1,3}$; see (\ref{prove main}). The ultimate map $\Psi_2$ of  Lemma~\ref{mult-table} 
is defined in (\ref{3.3.1}) in terms of $\Psi_1$ and the perfect pairings of (\ref{perfectp}). We use the tag $^\dagger$ to indicate when the maps $\psi_1^\dagger$ and $\psi_2^\dagger$ of Lemma~\ref{Mar-12} are being used.

\begin{lemma}
\label{Mar-12} Adopt Data~{\rm\ref{data30}}.  
 Then
there exist homomorphisms 
\begin{align}\psi_1^\dagger&:\ts\bw^2 F_1\to F_2,&&& \psi_2^\dagger&:F_1\t F_2\to F_3,\label{3.2.1}\\\notag  \psi_3&:F_1\t F_3\to F_4, \text{ and} &&& \psi_4&:D_2F_2\to F_4\end{align}
which satisfy the following two conditions.
\begin{enumerate}[\rm(a)]
 \item\label{Mar-12.a}
The homomorphisms of {\rm(\ref{3.2.1})} satisfy the equations 
\begin{align}
\psi_3(x_1\t d_4(x_4))&{}=d_1(x_1)\cdot x_4, \label{A.2}\\
\psi_4(x_2\cdot d_3(x_3))&{}=-\psi_3(d_2(x_2)\t x_3),\label{A.3}\\
(d_2\circ \psi^\dagger_1)(x_1\w y_1)&{}=d_1(x_1)\cdot y_1-d_1(y_1)\cdot x_1,\label{new b}
\\
(d_3\circ \psi^\dagger_2)(x_1\t x_2)&{}=d_1(x_1)\cdot x_2-\psi^\dagger_1(x_1\w d_2(x_2)),
\label{b}\\
(d_4\circ \psi_3)(x_1\t x_3)&{}=d_1(x_1)\cdot x_3-\psi^\dagger_2(x_1\t d_3(x_3)),\text{ and}\label{c}
\\
(d_4\circ \psi_4)(x_2^{(2)})&{}=\psi^\dagger_2(d_2(x_2)\t x_2),\label{d}
\end{align} for all $x_i$ and $y_i$ in $F_i$.
\item \label{Mar-12.b} If 
the $P$-module homomorphisms 
\begin{equation}
\Phi_1:F_1\to F_3^\vee,\quad \Phi_2:F_2\to F_2^{\vee},\quad\text{and}\quad\Phi_3:F_3\to F_1^{\vee}, \label{perfectp}\end{equation}
are defined by$$\Phi_1(x_1)= \psi_3(x_1\t -),\quad \Phi_2(x_2)= \psi_4(x_2\cdot -),\quad \text{and}\quad\Phi_3(x_3)=\psi_3(-\t x_3),$$
respectively, with $x_i\in F_i$, then the homomorphisms of {\rm(\ref{perfectp})} are isomorphisms.\end{enumerate} 
\end{lemma}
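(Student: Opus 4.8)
The plan is to build the four maps in two stages: first the "easy" part, which produces homomorphisms satisfying \eqref{A.2}–\eqref{d} together with \emph{some} pairings $\Phi_1,\Phi_2,\Phi_3$ that need not be isomorphisms, and then a correction step that adjusts $\psi_3$ and $\psi_4$ (and, if necessary, $\psi_1^\dagger$ and $\psi_2^\dagger$) so that \ref{Mar-12.b} holds without destroying \ref{Mar-12.a}. The starting point for the first stage is the self-duality hypothesis: since $F\cong F^\vee$ as complexes, fix an isomorphism $\theta\colon F\to F^\vee$, i.e.\ isomorphisms $\theta_i\colon F_i\to F_{4-i}^\vee$ compatible with the differentials. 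Transposing and using the natural double-duality identification $F_i\cong F_i^{\vee\vee}$ (valid since the $F_i$ are finitely generated free and $F_4$ has rank one, so $(-)^\vee$ is a duality on free modules), one gets a map $F_{4-i}\to F_i^\vee$, and composing appropriately yields candidate pairings $F_1\otimes F_3\to F_4$ and $F_2\otimes F_2\to F_4$. The content of the first stage is then to (i) \emph{symmetrize} the $F_2\otimes F_2\to F_4$ pairing so that it factors through $D_2F_2$ (replace $\theta$ by $\theta+\varepsilon\theta^{\mathsf T}$-type averaging, or invoke that a self-dual complex of odd-rank-top length four can be chosen with symmetric middle form — this is where one must be slightly careful, and no division by $2$ is allowed), and (ii) homotopy-correct the resulting maps so that the differential identities \eqref{A.2}, \eqref{A.3} hold exactly. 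Identity \eqref{A.2} says $\psi_3(x_1\otimes d_4(x_4)) = d_1(x_1)x_4$; unwinding, this is forced once one matches up $\psi_3$ with the chosen duality isomorphism $\theta_4\colon F_4\xrightarrow{\sim}F_0^\vee=P^\vee$ and uses that $d$ is a chain map to $d^\vee$. Similarly \eqref{A.3} is the compatibility of the $F_2$-form with one step of the differential, and follows from $\theta$ being a chain map.

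For the maps $\psi_1^\dagger$ and $\psi_2^\dagger$, the idea is the standard one: $F$ always carries a homotopy-associative multiplication (choose any contracting homotopy for $F\otimes F$ against $F$), so choose $\psi_1^\dagger, \psi_2^\dagger, \psi_3, \psi_4$ to be the degree-2, $(1,2)$, $(1,3)$, and $D_2F_2$ components of such a multiplication. Condition \eqref{new b} is the Leibniz rule $d_2(x_1y_1)=d_1(x_1)y_1 - d_1(y_1)x_1$ (the sign because both factors are odd); \eqref{b} and \eqref{c} are the Leibniz rules in bidegrees $(1,2)$ and $(1,3)$ (using $x_1^2=0$ forces $\psi_1^\dagger$ to be alternating, hence defined on $\bigwedge^2 F_1$); and \eqref{d} is the divided-power Leibniz rule $d_4(x_2^{(2)}) = d_2(x_2)\, x_2$. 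All of these are automatic for \emph{any} choice of contracting homotopy, by a diagram chase exactly as in \cite{BE77}; the only subtlety is ensuring the $(1,3)$-component of the product agrees with the pairing $\psi_3$ already fixed in the first stage and that $\psi_4$ is the chosen symmetric $D_2F_2$-form — but one is free to build the homotopy \emph{extending} the already-chosen $\psi_3,\psi_4$, since those satisfy the two needed Leibniz identities \eqref{A.2}, \eqref{A.3} by construction.

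The genuinely hard step is part \ref{Mar-12.b}: getting $\Phi_1,\Phi_2,\Phi_3$ to be isomorphisms, not merely the maps coming from \emph{some} duality. Here is where the self-duality of $F$ gets used in full strength rather than just to define the pairings. The strategy: by construction from a genuine isomorphism $\theta$, one \emph{can} arrange $\Phi_1,\Phi_2,\Phi_3$ to be isomorphisms — the point is to choose $\psi_3,\psi_4$ \emph{from the start} as the components of $\theta$ (suitably symmetrized), not as the components of an arbitrary homotopy-associative product. So the real work is showing that after symmetrizing the middle form to factor through $D_2F_2$ one has not lost bijectivity. One way: show that the symmetrization of a nondegenerate alternating-to-$F_4$... no — show directly that a self-dual complex $F$ of length $4$ with $\rank F_4=1$ admits a \emph{symmetric} duality, i.e.\ a chain isomorphism $\theta$ with $\theta^{\mathsf T}=\theta$ under the double-dual identification. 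Granting that, $\Phi_2$ is $\theta_2$ (or its symmetrization, still an isomorphism because symmetrizing an isomorphism that is already "close to symmetric" stays an isomorphism — this requires an argument, e.g.\ that the relevant correction is by a map factoring through a boundary and can be absorbed by changing $\theta$ by a chain homotopy), and $\Phi_1=\theta_1$, $\Phi_3=\theta_3$ transposed. The technical heart, then, is the lemma: \emph{any} self-dual length-four complex with top rank one has a symmetric self-duality; I expect this to be proved by an obstruction-theoretic argument — start with any $\theta$, measure the failure of symmetry $\theta - \theta^{\mathsf T}$, observe it is a chain map $F\to F^\vee$ that is null-homotopic (since $\theta,\theta^{\mathsf T}$ induce the same map on the unique nonzero homology, or one forces this), and correct by a homotopy; the length being four keeps the bookkeeping finite. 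This is the one place I would budget the most care, and it is exactly the place where the hypotheses "$P$ local, Gorenstein, $F$ minimal" were doing work in \cite{KM80,K87} and must now be circumvented.
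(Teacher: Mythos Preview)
Your proposal has the right ingredients but assembles them in an order that creates an unnecessary obstacle, and the step you flag as needing ``the most care'' is precisely where it fails.

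You propose to build $\psi_3,\psi_4$ first from a self-duality $\theta$, then extend to $\psi_1^\dagger,\psi_2^\dagger$. This forces you to confront the question of whether $F$ admits a \emph{symmetric} self-duality (so that the middle pairing factors through $D_2F_2$ while remaining nondegenerate). Your suggested fix---measure $\theta-\theta^{\mathsf T}$, note it is null-homotopic, and correct---does not work without dividing by $2$: a homotopy $h$ with $dh+hd=\theta-\theta^{\mathsf T}$ gives no obvious way to produce a $\theta'$ that is simultaneously symmetric and still an isomorphism. You acknowledge this is delicate but offer no mechanism, and over a general commutative ring I do not see one.

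The paper avoids this entirely by reversing the order. First, all four maps $\psi_1^\dagger,\psi_2^\dagger,\psi_3,\psi_4$ are built together by a single comparison-theorem argument (Lemma~\ref{one}): one writes down an explicit auxiliary complex $G$ whose degree-$4$ piece contains $D_2F_2$ as a summand, so that $\psi_4$ factors through $D_2F_2$ \emph{by construction}, with no symmetrization needed. At this stage the differential identities \eqref{A.2}--\eqref{d} all hold, but none of $\Phi_1,\Phi_2,\Phi_3$ is yet known to be an isomorphism. Second (Lemma~\ref{three}), one uses the given self-duality $\phi:F\to F^\vee$ only to correct $\psi_3$: the difference between $\psi_3$ and the perfect pairing coming from $\phi_3$ is a cycle in $(F_1\otimes F)^\vee$, hence a boundary, so a homotopy $\sigma:F_1\otimes F_2\to F_4$ adjusts $\psi_3$ to a perfect pairing while Remark~\ref{two} propagates the change to $\psi_2^\dagger,\psi_4$ preserving \eqref{A.2}--\eqref{d}. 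Now $\Phi_1,\Phi_3$ are isomorphisms. The key point you are missing is the final step: the maps $\Phi_0,\Phi_1,-\Phi_2,\Phi_3,\Phi_4$ assemble into a chain map $F\to F^\vee$, and since $\Phi_i$ is an isomorphism for $i\neq 2$, the five lemma forces $\Phi_2$ to be an isomorphism as well. Thus $\Phi_2$ is never attacked directly, and the symmetric-self-duality question never arises.
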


We prove Lemma~\ref{Mar-12} in four steps. In Lemma~\ref{one} we obtain maps $\psi^\dagger_1$,  $\psi^\dagger_2$, $\psi_3$, and $\psi_4$ which satisfy all of the differential properties (\ref{A.2}) -- (\ref{d}). In Remark~\ref{two} we record the ramifications of modifying $\psi_3$ by a small homotopy. In Lemma~\ref{three} we modify $\psi_3$ in order to make the new version of $\psi_3:F_1\t F_3\to F_4$ be a perfect pairing. Finally, we complete the proof in 
\ref{four} by showing 
 that the map $\Phi_2$ of (\ref{perfectp}) is also an isomorphism.
 
We learned the technique that is used  in the proof of Lemma~\ref{one} from \cite[Prop. 1.1]{BE77}. This technique is similar to the Tate method of killing cycles \cite[Sect.~2]{T57}; one kills cycles of even degree with exterior variables and one kills cycles of odd degree with divided power variables.

\begin{lemma}
\label{one} Adopt Data~{\rm\ref{data30}}.  
Then there are $P$-module homomorphisms $\psi^\dagger_1$,  $\psi^\dagger_2$, $\psi_3$, and $\psi_4$, as described in 
{\rm(\ref{3.2.1})},
which satisfy {\rm(\ref{A.2})}, 
{\rm(\ref{A.3})}, {\rm(\ref{new b})}, {\rm(\ref{b})}, {\rm(\ref{c})}, and {\rm(\ref{d})}.  
\end{lemma}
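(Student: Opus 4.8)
\emph{Overall strategy.} The plan is to construct the four maps one after another, in the order $\psi_1^\dagger$, $\psi_2^\dagger$, $\psi_3$, $\psi_4$, each obtained by lifting an explicitly prescribed homomorphism through a differential of $F$; this is precisely the cycle‑killing technique of \cite[Prop.~1.1]{BE77} and \cite[Sect.~2]{T57} mentioned above. Throughout I will use acyclicity of $F$ in the form: $d_4$ is injective, and $\ker d_i=\im d_{i+1}$ for $i=1,2,3$. Since a free $P$-module is projective, any $P$-linear map from a free module into $\im d_{i+1}$ lifts through the surjection $d_{i+1}\colon F_{i+1}\twoheadrightarrow\im d_{i+1}$; this is the only existence principle used.

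\emph{The maps $\psi_1^\dagger$, $\psi_2^\dagger$, $\psi_3$.} First I would observe that $(x_1,y_1)\mapsto d_1(x_1)\cdot y_1-d_1(y_1)\cdot x_1$ is $P$-bilinear and alternating, hence factors through a map $\bw^2F_1\to F_1$; since $d_1(x_1)$ and $d_1(y_1)$ lie in $P$ and therefore commute, this map has image in $\ker d_1=\im d_2$, so it lifts through $d_2$ to a $\psi_1^\dagger$ satisfying (\ref{new b}). Next, using (\ref{new b}) together with $d_1d_2=0$, one checks that $x_1\t x_2\mapsto d_1(x_1)\cdot x_2-\psi_1^\dagger(x_1\w d_2(x_2))$ has image in $\ker d_2=\im d_3$, so it lifts through $d_3$ to a $\psi_2^\dagger$ satisfying (\ref{b}). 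In the same way, using (\ref{b}) and $d_2d_3=0$, the map $x_1\t x_3\mapsto d_1(x_1)\cdot x_3-\psi_2^\dagger(x_1\t d_3(x_3))$ has image in $\ker d_3=\im d_4$, so it lifts through $d_4$ to a $\psi_3$ satisfying (\ref{c}). Then (\ref{A.2}) comes for free: by (\ref{c}) and $d_3d_4=0$, $d_4\bigl(\psi_3(x_1\t d_4(x_4))\bigr)=d_1(x_1)\cdot d_4(x_4)=d_4\bigl(d_1(x_1)\cdot x_4\bigr)$, and $d_4$ is injective.

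\emph{The map $\psi_4$.} Let $\mu\colon F_2\t F_2\to F_3$ be $a\t b\mapsto\psi_2^\dagger(d_2(a)\t b)$ and set $\rho=\mu\circ\comult\colon D_2F_2\to F_3$, so that $\rho(x_2^{(2)})=\psi_2^\dagger(d_2(x_2)\t x_2)$. By (\ref{b}) and $d_1d_2=0$ one gets $d_3\mu(a\t b)=-\psi_1^\dagger(d_2(a)\w d_2(b))$; as a function of $(a,b)$ this factors through $\bw^2F_2$, i.e.\ $d_3\mu$ is alternating, so by \ref{2.1}.(\ref{2.1.c}) the composite $d_3\mu\circ\comult$ vanishes, that is, $\im\rho\subseteq\ker d_3=\im d_4$. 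Since $D_2F_2$ is a free $P$-module, $\rho$ lifts through $d_4$ to a map $\psi_4\colon D_2F_2\to F_4$ with $d_4\circ\psi_4=\rho$; evaluating at $x_2^{(2)}$ yields (\ref{d}). Finally (\ref{A.3}) is forced: $\comult(x_2\cdot d_3(x_3))=x_2\t d_3(x_3)+d_3(x_3)\t x_2$, so $d_4\psi_4(x_2\cdot d_3(x_3))=\psi_2^\dagger(d_2(x_2)\t d_3(x_3))$ (the other term dies since $d_2d_3=0$), while (\ref{c}) gives $d_4\bigl(-\psi_3(d_2(x_2)\t x_3)\bigr)=\psi_2^\dagger(d_2(x_2)\t d_3(x_3))$ as well; injectivity of $d_4$ then gives (\ref{A.3}).

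\emph{Expected main obstacle.} Steps for $\psi_1^\dagger$, $\psi_2^\dagger$, $\psi_3$ are routine ``the prescribed map is a cycle because the previously built map satisfies its differential identity'' bookkeeping. The one delicate point is $\psi_4$: one must resist the temptation to obtain it by symmetrizing $\psi_2^\dagger(d_2(-)\t -)$, since that introduces a spurious factor of $2$ and fails when $2$ is not a unit in $P$. The correct move is to pass through the divided‑power comultiplication and to exploit the precise interaction between $\comult$, which annihilates the alternating part, and the alternating map $\psi_1^\dagger$; one should also make explicit that $D_2F_2$ is free when $F_2$ is, so that the last lift is legitimate.
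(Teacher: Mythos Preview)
Your proof is correct and follows essentially the same cycle-killing strategy as the paper. The only difference is packaging: the paper assembles all four lifting problems into a single auxiliary complex $G$ and obtains $\psi_1^\dagger,\psi_2^\dagger,\psi_3,\psi_4$ at once as components of one chain map $G\to F$ produced by the comparison theorem, whereas you perform the four lifts sequentially and verify each cycle condition by hand; your handling of $\psi_4$ via $\comult$ and the alternating property of $d_3\mu$ is exactly what the divided-power summand $D_2F_2$ in the paper's $G_4$ encodes.
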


\begin{proof} Consider the complex 
$$G: G_5\xrightarrow{g_5}  G_4\xrightarrow{g_4}G_3\xrightarrow{g_3}G_2\xrightarrow{g_2}G_2\xrightarrow{g_1}G_1\xrightarrow{g_1}G_0,$$
where $$G_i=\begin{cases} 
F_0&\text{if $i=0$}\\
F_1&\text{if $i=1$}\\
\bw^2F_1\p F_2&\text{if $i=2$}\\
(F_1\t F_2)\p F_3&\text{if $i=3$}\\
D_2F_2\p (F_1\t F_3)\p F_4&\text{if $i=4$}\\(F_2\t F_3)\p (F_1\t F_4)&\text{if $i=5$},\end{cases}$$
and 
$$g_5=
\bmatrix
1\t d_3&0\\
d_2\t 1&-1\t d_4\\
0&d_1\t 1
\endbmatrix,\ g_4=\bmatrix
d_2&-1\t d_3&0\\
0&d_1\t 1&d_4
 \endbmatrix,\  g_3=\bmatrix -1\t d_2&0\\
d_1\t 1&d_3\endbmatrix,$$ $g_2=\bmatrix d_1\w 1-1\w d_1& d_2\endbmatrix$, and  $g_1=d_1$.
The comparison theorem guarantees that there is a map of complexes $c:G\to F$ which extends the identity map in the first two components:
$$\xymatrix{
G_5\ar[d]\ar[r]^{
g_5}
&G_4\ar[d]^{c_4}\ar[r]^{g_4}&
G_3\ar[d]^{c_3}\ar[r]^{g_3}&
G_2\ar[d]^{c_2}\ar[r]^{g_2}&
G_1\ar[r]^{g_1}\ar[d]^{=}& G_0\ar[d]^{=}\\
0\ar[r]&F_4
\ar[r]^{d_4}&F_3
\ar[r]^{d_3}&F_2
\ar[r]^{d_2}&F_1
\ar[r]^{d_1}&F_0.
}$$
We name the interesting components of the $c_i$:
$$c_4=\bmatrix \psi_4& \psi_3&u_4\endbmatrix, \quad c_3=\bmatrix \psi^\dagger_2&u_3\endbmatrix, \quad\text{and}\quad 
c_2=\bmatrix \psi^\dagger_1&u_2\endbmatrix.$$
It is easy to check that the map $u_i$ can be taken to be $\id_{F_i}$ for $2\le i\le 4$. It is also   easy to read the properties {\rm(\ref{A.2})}, 
{\rm(\ref{A.3})}, {\rm(\ref{new b})}, {\rm(\ref{b})}, {\rm(\ref{c})}, and {\rm(\ref{d})} from the fact that $c$ is a map of complexes.
\end{proof}

\begin{remark}
\label{two} Adopt Data~{\rm\ref{data30}}. Let $\psi^\dagger_1$,  $\psi^\dagger_2$, $\psi_3$, and $\psi_4$, as described 
in 
{\rm(\ref{3.2.1})},
be 
$P$-module homomorphisms which 
satisfy {\rm(\ref{A.2})}, 
{\rm(\ref{A.3})}, {\rm(\ref{new b})}, {\rm(\ref{b})}, {\rm(\ref{c})}, and {\rm(\ref{d})}.
 A straightforward calculation shows that if
 $$\sigma: F_1\t F_2 \to F_4$$ is a $P$-module homomorphism, and 
\begin{align*}
\psi_1'&{}=\psi^\dagger_1,&&&
\psi_2'&{}=\psi^\dagger_2+d_4\circ \sigma,\\
\psi_3'&{}=\psi_3-\sigma\circ (1\t d_3),\text{ and}&&&
\psi_4'(x_2^{(2)})&{}=\psi_4(x_2^{(2)})+\sigma(d_2(x_2)\t x_2),\end{align*}
then the equations 
{\rm(\ref{A.2})}, 
{\rm(\ref{A.3})}, {\rm(\ref{new b})}, {\rm(\ref{b})}, {\rm(\ref{c})}, and {\rm(\ref{d})} are also 
satisfied when each $\psi_i$ or $\psi^\dagger_i$ is replaced with $\psi_i'$.
\end{remark}

Lemma~\ref{three} is a critical step in the present paper. If $P$ is local and $F$ is a minimal resolution, then $\psi_3$ is automatically a perfect pairing. (See, for example, \cite[Thm.~1.5]{BE77}.) In Lemma~\ref{three}, we prove that in the general situation (when $P$ is not necessarily local and $F$ is not necessarily a minimal resolution), it is possible to modify $\psi_3$ to make it become a perfect pairing. 
The perfect pairing $\psi_3$, and the corresponding statement for $\psi_4$,  are used throughout the paper. We define many maps into $F_3$ or $F_2$, by showing the value of the map after it is combined with $\psi_3$ or $\psi_4$. This style of definition is legitimate only because of assertion (\ref{Mar-12.b}) in Lemma~\ref{Mar-12}.

\begin{lemma}
\label{three}  Adopt Data~{\rm\ref{data30}} and  let $\psi^\dagger_1$,  $\psi^\dagger_2$, $\psi_3$, and $\psi_4$, as described 
in 
{\rm(\ref{3.2.1})}, 
be 
$P$-module homomorphisms which 
satisfy {\rm(\ref{A.2})}, 
{\rm(\ref{A.3})}, {\rm(\ref{new b})}, {\rm(\ref{b})}, {\rm(\ref{c})}, and {\rm(\ref{d})}.
Then there is a homomorphism $\sigma: F_1\t F_2\to F_4$ such that
 $$\psi_3 - \sigma \circ (1\t d_3):F_1\t F_3\to F_4$$
is a perfect pairing. \end{lemma}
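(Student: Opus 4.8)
The plan is to exploit self-duality of $F$ together with the differential equation (\ref{c}) to control the failure of $\psi_3$ to be a perfect pairing. Since $F\cong F^\vee$ as complexes, and $F_4\cong P$, the dual complex $F^\vee$ is again a length four resolution of a cyclic module; in particular it is acyclic, and so is the Hom-double-complex built from $F$ and $F^\vee$. The map $\Phi_1\colon F_1\to F_3^\vee$, $x_1\mapsto\psi_3(x_1\t-)$, and the map $\Phi_3\colon F_3\to F_1^\vee$, $x_3\mapsto\psi_3(-\t x_3)$, are (up to the identification $F_1^{\vee\vee}=F_1$) dual to one another, so $\Phi_1$ is an isomorphism if and only if $\Phi_3$ is. Thus it suffices to arrange that the single map $\Phi_3\colon F_3\to F_1^\vee$ be an isomorphism after the modification $\psi_3\rightsquigarrow\psi_3-\sigma\circ(1\t d_3)$.

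First I would identify the cokernel of $\Phi_3$. Dualizing the resolution $F$, $F_1^\vee$ sits in the acyclic complex $F^\vee$ in homological degree $3$ (reading $0\to F_0^\vee\to F_1^\vee\to F_2^\vee\to F_3^\vee\to F_4^\vee$), with $F_0^\vee\cong F_4$ in degree $0$. Equation (\ref{c}), which reads $(d_4\circ\psi_3)(x_1\t x_3)=d_1(x_1)\cdot x_3-\psi_2^\dagger(x_1\t d_3(x_3))$, says precisely that $\Phi_3$ is a map of complexes from $F$ (shifted appropriately) to $F^\vee$ lifting a given map in low degrees — concretely, that $\Phi_3$ together with $\Phi_2,\Phi_1$ and the identity forms a chain map realizing the self-duality isomorphism in degrees $\le 1$. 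Because both $F$ and $F^\vee$ are acyclic complexes of the \emph{same} length resolving cyclic modules with the bottom terms already matched up by the identity, the comparison theorem and the uniqueness-up-to-homotopy of such lifts force $\Phi_1$ (hence $\Phi_3$) to be an isomorphism in the \emph{local minimal} case; in general it is at worst a quasi-isomorphism-inducing map whose mapping cone is exact, so $\Phi_3$ is automatically injective with projective (hence free, after we check rank) cokernel, and the obstruction to its being an isomorphism is captured by a single splitting datum. The role of $\sigma$ is exactly to adjust the degree-$3$ component of this chain lift by a chain homotopy: by Remark~\ref{two}, replacing $\psi_3$ by $\psi_3-\sigma\circ(1\t d_3)$ is the only modification of $\psi_3$ that preserves (\ref{A.2})--(\ref{d}), and it changes $\Phi_3$ by $\sigma\circ(1\t d_3)$-type terms, i.e.\ by a homotopy term landing in the image of the relevant boundary map.

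The key steps, in order: (i) set up the self-duality isomorphism $\mu\colon F\xrightarrow{\ \sim\ }F^\vee$ normalized so that $\mu_0=\id_{F_0}$ (possible since $F_0=F_4^\vee$... wait, $F_0=P$ and $F_4\cong P$; normalize using a chosen generator of $F_4$); (ii) show, using (\ref{A.2}) and (\ref{c}), that $(\id,\Phi_1,\Phi_2,\Phi_3,\id)$ is a chain map $F\to F^\vee$ (more precisely a lift of the canonical degree-$0$ comparison map); (iii) compare it with $\mu$: the difference is null-homotopic, and the degree-$3$ piece of a null-homotopy is a map $h\colon F_3\to F_2^\vee$, but what I actually need is a correction by a map factoring through $d_3$; dualize to convert $h$ into the desired $\sigma\colon F_1\t F_2\to F_4$; (iv) verify that with this $\sigma$, the corrected $\Phi_3$ equals $\mu_3$ up to a term killed by the homotopy, hence is an isomorphism, hence $\psi_3-\sigma\circ(1\t d_3)$ is a perfect pairing.

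The main obstacle I anticipate is step (iii): matching the abstract chain homotopy — which a priori is a map $F_3\to F_2^\vee$, i.e.\ an element of $\Hom_P(F_1\t F_3,F_4)$ after adjunction — against the very specific allowed modification $\psi_3\mapsto\psi_3-\sigma\circ(1\t d_3)$ with $\sigma\colon F_1\t F_2\to F_4$. Not every element of $\Hom_P(F_1\t F_3,F_4)$ is of the form $\sigma\circ(1\t d_3)$; one needs the homotopy to factor through $1\t d_3$, which is where acyclicity of $F$ (so that $\ker d_3=\im d_4$ and cycles lift) and the precise shape of equations (\ref{A.2})--(\ref{d}) must be used to show the discrepancy chain actually lands in $\im(1\t d_3)^\vee$. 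Concretely I expect to diagram-chase in the complex $G$ of Lemma~\ref{one}, or its dual, to produce $\sigma$ as a genuine null-homotopy component rather than an arbitrary map, and to check that the resulting $\Phi_3$ is surjective by a rank count (both $F_3$ and $F_1^\vee$ have the same rank, $F$ being self-dual) plus the already-established injectivity.
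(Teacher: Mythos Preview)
Your core strategy --- use the given self-duality isomorphism $\mu\colon F\xrightarrow{\sim}F^\vee$ to produce a reference perfect pairing $\rho$, then show $\psi_3-\rho$ factors through $1\otimes d_3$ --- is exactly the paper's idea. But your execution is more elaborate than necessary and contains an indexing error that obscures the resolution of your own ``main obstacle.''

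In step (iii) you write that the degree-$3$ piece of a null-homotopy is a map $h\colon F_3\to F_2^\vee$. With $(F^\vee)_i=F_{4-i}^\vee$, a homotopy has components $h_i\colon F_i\to(F^\vee)_{i+1}=F_{3-i}^\vee$, so $h_3\colon F_3\to F_0^\vee$ and $h_2\colon F_2\to F_1^\vee$; the homotopy equation at level $3$ reads $\Phi_3-\mu_3=d_1^\vee\circ h_3+h_2\circ d_3$. The map you want is $h_2$, which under adjunction \emph{is} $\sigma\colon F_1\otimes F_2\to F_4$; no ``dualization'' is needed. The obstruction you worry about is the extra term $d_1^\vee\circ h_3$. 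But this vanishes for free: once you normalize so that $\Phi_4=\mu_4$ (your step (i)), both chain-map squares give $\Phi_3\circ d_4=d_1^\vee\circ\Phi_4=d_1^\vee\circ\mu_4=\mu_3\circ d_4$, hence $(\Phi_3-\mu_3)\circ d_4=0$, and you may simply take $h_3=0$.

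Stripped of the chain-homotopy language, this is precisely the paper's proof: define $\rho(x_1\otimes x_3)=u\,[\phi_3(x_3)](x_1)$ from the normalized isomorphism, use only (\ref{A.2}) (not (\ref{c})) together with the chain-map property of $\phi$ to check $(\psi_3-\rho)(x_1\otimes d_4(x_4))=0$, and then invoke acyclicity of $(F_1\otimes F)^\vee$ to write $\psi_3-\rho=\sigma\circ(1\otimes d_3)$. Your detour through the full chain map $(\Phi_0,\Phi_1,-\Phi_2,\Phi_3,\Phi_4)$ and a global homotopy is unnecessary; the entire argument lives in a single degree and needs only the one equation (\ref{A.2}). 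The claims that $\Phi_3$ is ``automatically injective with projective cokernel'' and that the obstruction is ``a single splitting datum'' are neither needed nor justified.
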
 

\begin{proof} Data~\ref{data30} guarantees that the complexes $F$ and $F^\vee$ are isomorphic. Let 
\begin{equation}\label{31.7.1}\xymatrix {F\ar[d]^{\phi}&
0\ar[r]&F_4\ar[r]^{d_4}\ar[d]^{\phi_4}&F_3\ar[r]^{d_3}\ar[d]^{\phi_3}&F_2\ar[r]^{d_2}\ar[d]^{\phi_2}&F_1\ar[r]^{d_1}\ar[d]^{\phi_1}&F_0\ar[d]^{\phi_0}\\F^\vee&
0\ar[r]&F_0^\vee\ar[r]^{d_1^\vee}&F_1^\vee\ar[r]^{d_2^\vee}&F_2^\vee\ar[r]^{d_3^\vee}&F_3^\vee\ar[r]^{d_4^\vee}&F_4^\vee,}\end{equation}
be one such isomorphism. The most natural isomorphism $F_4\to F_0^\vee$ is the map $\Phi_4$ which sends the element $x_4$ of $F_4$ to the homomorphism $P\to F_4$ which sends $x_0$ in $P$ to $x_0 x_4$ in $F_4$. In other words,
\begin{equation}\label{3.5.1.5}[\Phi_4(x_4)](x_0)=x_0 x_4,\end{equation}
for $x_0\in P$ and $x_4\in F_4$.
 Observe that there is a unit $u$ in $P$ with $u\phi_4=\Phi_4$. 
(Indeed, the source and target of $\phi_4$ are both isomorphic to $P$ and every $P$-module automorphism of $P$ is given by multiplication by a unit of $P$.) At any rate,
\begin{equation}\label{31.8.1}[(u\phi_4)(x_4)](x_0)=x_0 x_4,\end{equation} for $x_0\in P$ and $x_4\in F_4$.

Let $\rho: F_1\t F_3\to F_4$ be the homomorphism $$\rho(x_1\t x_3)=u[\phi_3(x_3)](x_1),$$ where $\phi_3:F_3\to F_1^\vee$ is the isomorphism of (\ref{31.7.1}) and $u$ is the unit of (\ref{31.8.1}). Observe that $\rho$ is a perfect pairing. We complete the proof by showing 
 that
there is a homomorphism $\sigma: F_1\t F_2\to F_4$ such that
 $$\psi_3 -\rho =\sigma \circ (1\t d_3).$$

The complex $(F_1\t_P F)^\vee$ is exact. It suffices  to show that $\psi_3-\rho$ is in $$\ker (1\t d_4)^\vee.$$ In other words, we prove that
\begin{equation}\label{31.9.1}\psi_3(x_1\t d_4(x_4))=\rho((x_1\t d_4(x_4)),\end{equation}
for $x_1\in F_1$ and $x_4\in F_4$. Observe that
\begin{align*} 
\rho(x_1\t d_4(x_4))={}&u[\phi_3(d_4(x_4))](x_1)\\
{}={}&u[(d_1^\vee\circ \phi_4)(x_4)](x_1),&&\text{by (\ref{31.7.1}),}\\
{}={}&u[\phi_4(x_4)](d_1(x_1))\\
{}={}&d_1(x_1)\cdot x_4,&&\text{by (\ref{31.8.1})},\\
{}={}&\psi_3(x_1\t d_4(x_4)),&&\text{by (\ref{A.2})}.
\end{align*}
Equation (\ref{31.9.1}) has been established and the proof is complete.
\end{proof}

\begin{chunk}
\label{four} {\it Proof of Lemma~{\rm\ref{Mar-12}}.} Apply Lemma~\ref{one} to find maps $\psi^\dagger_1$, $\psi^\dagger_2$, $\psi_3$, and $\psi_4$ which satisfy assertion (\ref{Mar-12.a}) of Lemma~{\rm\ref{Mar-12}}. Apply Lemma~\ref{three} to find a map $\sigma$ with $$\psi_3-\sigma\circ (1\t d_3):F_1\t F_3\to F_4$$ a perfect pairing. Use this $\sigma$ to modify the $\psi_i$ and $\psi^\dagger_i$, as described in Remark~\ref{two}. Reuse the old names and call the modified maps $\psi_i$ and $\psi^\dagger_i$. At this point assertion (\ref{Mar-12.a}) of Lemma~{\rm\ref{Mar-12}} holds and the maps $\Phi_1$ and $\Phi_3$ of (\ref{perfectp}) are isomorphisms. It remains to show that
the homomorphism $\Phi_2:F_2\to F_2^\vee$ of {\rm(\ref{perfectp})} is an also an isomorphism.
Define 
$\Phi_0:F_0\to F_4^\vee$ to be the map which sends $x_0\in F_0=P$ to ``multiplication  by $x_0$'' in $F_4^\vee=\Hom_P(F_4,F_4)$ and define $\Phi_4:F_4\to F_0^\vee$ 
to be the map which sends $x_4\in F_4$ to ``multiplication  by $x_4$'' in $$F_0^\vee=\Hom_P(F_0,F_4)=\Hom_P(P,F_4).$$A straightforward calculation shows  that
\begin{equation}
\notag\xymatrix{
0\ar[r] &F_4\ar[r]^{d_4}\ar[d]^{\Phi_4}&F_3 \ar[r]^{d_3}\ar[d]^{\Phi_3}&F_2\ar[r]^{d_2}\ar[d]^{-\Phi_2}&F_1\ar[r]^{d_1}\ar[d]^{\Phi_1}&F_0\ar[d]^{\Phi_0}\\
0\ar[r]&F_0^\vee\ar[r]^{d_1^\vee}&
F_1^\vee\ar[r]^{d_2^\vee}&
F_2^\vee\ar[r]^{d_3^\vee}&
F_3^\vee\ar[r]^{d_4^\vee}&F_4^\vee}\end{equation}
is a map of complexes. (A version of this calculation appears as the  proof of \cite[Thm.~1.5]{BE77}.) The map $\Phi_i$ is an isomorphism for $i$ equal to $0$, $1$, $3$, and $4$. It follows that $\Phi_2$ is also an isomorphism. \hfill \qed
\end{chunk}

\section{The main result.}
The main result in the paper is Theorem~\ref{main}. The proof  is based on three Lemmas. The proof of Lemma~\ref{mult-table} is given in Section~\ref{prove-mult-table}; the proof of Lemma~\ref{3} is given in
Section~\ref{prove-3}; and the proof of Lemma~\ref{2^n} is given in Section~\ref{Inductive step.}. Preliminary calculations that are used in Section~\ref{Inductive step.} are made in Sections \ref{base case} and \ref{Properties of the data.}.

\begin{data}\label{A.1} Let $P$ be a commutative Noetherian ring and
$$F:\quad 0\to F_4\xrightarrow{d_4}
F_3\xrightarrow{d_3}F_2\xrightarrow{d_2}F_1\xrightarrow{d_1}F_0=P$$
be an acyclic complex of free $P$-modules with $\rank F_4=1$.
Assume that $P$-module homomorphisms 
$$\psi_3:F_1\t F_3\to F_4 \quad  \text{and} \quad \psi_4:D_2F_2\to F_4$$
have been identified with 
\begin{align*}
&(\ref{A.2})&&&\psi_3(x_1\t d_4(x_4))&{}=d_1(x_1)\cdot x_4,\\
&(\ref{A.3})&&&\psi_4(x_2\cdot d_3(x_3))&{}=-\psi_3(d_2(x_2)\t x_3),
\end{align*}
for all $x_i$ 
in $F_i$. Let $(-)^\vee$ be the functor $\Hom_P(-,F_4)$.  Assume  that the $P$-module homomorphisms 
\begin{align*}
& &&F_1\to F_3^\vee,&&F_2\to F_2^{\vee},&\text{and}\quad&F_3\to F_1^{\vee},\intertext{which are given by}
(\ref{perfectp})& &&x_1\mapsto \psi_3(x_1\t -),&&x_2\mapsto \psi_4(x_2\cdot -),&\text{and}\quad&x_3\mapsto\psi_3(-\t x_3),\end{align*}
respectively, are isomorphisms.
\end{data}

\begin{Remark} 
Recall that Lemma~\ref{Mar-12} guarantees that the data of \ref{data30} gives rise to the data of \ref{A.1}. The information about $\psi_1^\dagger$ and $\psi_2^\dagger$ from Lemma~\ref{Mar-12} is not used in the present section. (This information is used in Section~\ref{prove-3}).
\end{Remark}

\begin{definition}\label{B.1} Adopt Data~{\rm\ref{A.1}}. Let 
$N$ be an integer. The $P$-module homomorphism 
$$\psi_1:F_1\t F_1\to F_2$$
is called 
{\it $N$-compatible with the data of {\rm\ref{A.1}}} if  
\begin{equation}\label{B.4}(d_2\circ \psi_1)(x_1\t y_1)= N\Big(d_1(x_1)\cdot y_1-d_1(y_1)\cdot x_1\Big)\quad\text{and}\end{equation}
\begin{equation}\label{B.5}\psi_4(\psi_1(x_1\t d_2(x_2))\cdot x_2)=N d_1(x_1)\cdot \psi_4( x_2^{(2)}), \end{equation}
for all $x_i$ and $y_i$ in $F_i$. 
\end{definition}
Recall that our usage of the phrase ``alternating map'' is explained in \ref{2.1}.(\ref{2.1.c}). 
\begin{lemma}\label{mult-table}
 Adopt the  data of {\rm \ref{A.1}}. 
Suppose that  $\Psi_1:T_2F_1\to F_2$ is an alternating map which is $1$-compatible with the data of {\rm \ref{A.1}} in the sense of Definition~{\rm\ref{B.1}}.
Define $\Psi_2:F_1\t F_2\to F_3$  by
\begin{equation}\label{3.3.1}\psi_3\Big(y_1\t \Psi_2(x_1\t x_2)\Big)=\psi_4\Big(\Psi_1(y_1\t x_1)\cdot x_2)\Big),\end{equation}
for $x_i$ and $y_i$ in $F_i$.
 Then $F$ has the structure of a $\DGGamma$-algebra with multiplication $$F\times F\to F$$ given by
\begingroup\allowdisplaybreaks\begin{align} x_1\times y_1&{}=\Psi_1(x_1\w y_1),\notag\\
x_1\times x_2=\phantom{-}x_2\times x_1&{}=\Psi_2(x_1\t x_2),\notag\\
x_1\times x_3=-x_3\times x_1&{}= \psi_3(x_1\t x_3),\notag\\
x_2\times y_2&{}= \psi_4(x_2\cdot y_2),\quad\text{and}\notag\\
x_2^{(2)}&{}=\psi_4(x_2^{(2)}),\label{3.3.2}
\end{align}\endgroup
for  $x_i$ and $y_i$ in $F_i$. In {\rm(\ref{3.3.2})},  the divided power on the left is computed in $F$ and the divided power on the right is computed in $D_{\bullet}F_2$.
\end{lemma}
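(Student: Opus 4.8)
The plan is to verify the $\DGGamma$-algebra axioms one at a time for the multiplication table (\ref{3.3.2}), treating $\psi_3$ and $\psi_4$ as given (with their properties (\ref{A.2}), (\ref{A.3}), and the perfect-pairing conditions (\ref{perfectp})) and exploiting that $\Psi_1$ is alternating and $1$-compatible in the sense of Definition~\ref{B.1}. The graded-commutativity axiom (c) is essentially built in: $x_1\times y_1 = \Psi_1(x_1\w y_1)$ is skew because $\Psi_1$ factors through $\bw^2F_1$, the mixed products $x_1\times x_2$ and $x_2\times x_1$ are declared equal, $x_1\times x_3 = -x_3\times x_1$ by fiat, and $x_2\times y_2 = \psi_4(x_2\cdot y_2)$ is symmetric since $D_2F_2$ is. Axiom (d), $x_i^2=0$ for odd $i$, holds for $F_1$ because $\Psi_1$ is alternating; for $F_3$ one checks $\psi_3$-style products land in $F_4\cdot F_4 = 0$ by degree, so there is nothing to prove. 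The divided-power structure on $F_2$ is imported verbatim from $D_\bullet F_2$ via $x_2^{(k)}\mapsto \psi_4(\cdot)$ in the relevant low degrees, and the only nonzero iterated divided power that matters, $x_2^{(2)}$, is handled by (\ref{d})-type bookkeeping together with (\ref{B.5}).

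The real content is the Leibniz rule (b), $d(x_ix_j) = d(x_i)x_j + (-1)^i x_i d(x_j)$, which must be checked on each bidegree $(i,j)$. The cases $(1,1)$ and $(1,2)$ are exactly the hypotheses (\ref{B.4}) and the definition (\ref{3.3.1}) combined with the perfect pairing (\ref{perfectp}): once one knows $\psi_3(y_1\t \Psi_2(x_1\t x_2)) = \psi_4(\Psi_1(y_1\t x_1)\cdot x_2)$, applying $\psi_3(y_1\t d_3(-))$ and using (\ref{A.3}) reduces the $(1,2)$ Leibniz identity to an identity that (\ref{B.4}) and (\ref{B.5}) supply after peeling off the (injective) map $y_1\mapsto \psi_3(y_1\t-)$. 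The cases $(1,3)$, $(2,2)$, and the divided-power compatibility for $x_2^{(2)}$ follow the same pattern: write down the desired Leibniz identity, compose with the appropriate perfect pairing ($\psi_3(y_1\t -)$ or $\psi_4(y_2\cdot -)$), and reduce to (\ref{A.2}), (\ref{A.3}), (\ref{B.4}), (\ref{B.5}), or the acyclicity of $F$.

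The main obstacle is \emph{associativity}. One must show $(x\times y)\times z = x\times(y\times z)$ in each total degree $\le 4$; the only genuinely nontrivial triple bidegrees are $(1,1,1)$, $(1,1,2)$, and $(1,1,3)$, since everything of total degree $>4$ vanishes. For $(1,1,2)$, associativity is precisely how $\Psi_2$ was defined: $\Psi_2(x_1\t x_2)$ is characterized by $\psi_3(y_1\t \Psi_2(x_1\t x_2)) = \psi_4(\Psi_1(y_1\t x_1)\cdot x_2)$, which says $(y_1 x_1)x_2 = y_1(x_1 x_2)$ after pairing with $y_1\mapsto\psi_3(y_1\t-)$; one still owes the symmetric rebracketings, but these reduce to the same identity using commutativity. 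For $(1,1,3)$, one pairs the putative identity $(x_1 y_1)z_3 - x_1(y_1 z_3)$ against nothing — it already lives in $F_4$ — and must show it is zero directly; the tool is again the perfect pairing together with the differential identities, using that a cycle in $F_1\t F$ pairing trivially under $\psi_3$ must vanish by (\ref{perfectp}) and acyclicity. The hardest single case is $(1,1,1)$: here $(x_1 y_1)z_1 = \Psi_2(z_1 \t \Psi_1(x_1\w y_1))$ and one must prove the full cocycle (associator) identity over $\bw^3 F_1$, i.e.\ that the alternating sum over the three bracketings vanishes. My expectation is that this is where the hypothesis that $\Psi_1$ \emph{factors through $\bw^2 F_1$} (axiom (\ref{2.c})) is essential: the associator is a priori a map $T_3 F_1 \to F_3$, and one shows it is simultaneously alternating and a cycle in the relevant complex, hence, by $\psi_3$-duality and acyclicity of $F_1\t F$, forced to be zero. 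I would set up the associator as an explicit element of $\Hom_P(F_1\t F_1\t F_1, F_3)$, apply $d_3$ to it using the Leibniz rules already established, show the result is zero, conclude via acyclicity that the associator has the form $\psi_3(-\t -)$ of a lower map, and then use the alternating property to kill the remaining ambiguity.
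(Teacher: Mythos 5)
Your outline correctly identifies the skeleton of the argument: graded-commutativity and the odd-degree-square condition are essentially built into the multiplication table (using that $\Psi_1$ is alternating and that $F_3^2 \subseteq F_6 = 0$); the Leibniz rules $\operatorname{D}_{i,j}$ and the divided-power compatibility reduce, after pairing with $\psi_3$ or $\psi_4$, to (\ref{A.2}), (\ref{A.3}), (\ref{B.4}), (\ref{B.5}) and (\ref{3.3.1}); associativity in bidegree $(1,1,2)$ is precisely the definition (\ref{3.3.1}) once one chases the permuted bracketings through commutativity; and associativity in bidegree $(1,1,1)$ is where the real work sits. That is exactly the shape of the paper's proof.

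Two concrete issues remain. First, the case $(1,1,3)$ you list as ``genuinely nontrivial'' is in fact empty: $(x_1 y_1)\times z_3$ and $x_1\times(y_1 z_3)$ both lie in $F_5 = 0$, so there is nothing to check (you misstate the target as $F_4$). Second, and more seriously, your treatment of $\operatorname{A}_{1,1,1}$ stops short. After pairing the associator with $\psi_3(w_1\t -)$, you do get a map $A: F_1 \t T_3F_1 \to F_4$, and the computation that $A(d_2(w_2)\t -)=0$ does follow from (\ref{B.4}) and (\ref{B.5}); Remark~\ref{R1} then gives a factorization $A(w_1\t x_1\t y_1\t z_1) = d_1(w_1)\cdot\widetilde A(x_1\t y_1\t z_1)$. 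But ``use the alternating property to kill the remaining ambiguity'' is not an argument at this point. The paper's proof requires a specific chain of substitutions: first, using $\Psi_1(x_1\w x_1)=0$, one obtains $d_1(x_1)\widetilde A(x_1\t y_1\t z_1) = d_1(z_1)\widetilde A(x_1\t x_1\t y_1)$ and $d_1(x_1)\widetilde A(x_1\t x_1\t y_1)=0$; then one invokes the positive grade of $\im d_1$ to choose $x_1$ with $d_1(x_1)$ regular, deduces $\widetilde A(x_1\t y_1\t z_1)=0$ for all $y_1,z_1$ for that particular $x_1$; and finally, via a second identity exchanging the roles of the first slot with the others, pushes this to $\widetilde A\equiv 0$. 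The existence of a regular element in $\im d_1$ is an indispensable input, and the order in which the slots of $\widetilde A$ are specialized matters; neither appears in your sketch, so this step is a genuine gap rather than a routine reduction.
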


\begin{lemma}\label{3} 
Adopt the  data of {\rm \ref{data30}}. 
Then there exists an alternating map $$\Psi_{1,3}:T_2F_1\to F_2$$ which is $3$-compatible with the data of {\rm \ref{A.1}}. 
\end{lemma}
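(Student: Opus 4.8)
The plan is to obtain $\Psi_{1,3}$ as $\psi_1^\dagger$ plus a single correction term built from $\psi_2^\dagger$ by means of the perfect pairings, and then to verify the two compatibility identities with those same pairings. By Lemma~\ref{Mar-12}, the data of \ref{data30} furnish homomorphisms $\psi_1^\dagger$, $\psi_2^\dagger$, $\psi_3$, $\psi_4$ as in (\ref{3.2.1}) satisfying (\ref{A.2})--(\ref{d}), and with the maps $\Phi_1,\Phi_2,\Phi_3$ of (\ref{perfectp}) isomorphisms. Two uses of assertion (\ref{Mar-12.b}) of Lemma~\ref{Mar-12} drive the argument: since $\Phi_2$ is an isomorphism, an element of $F_2$ (hence a map into $F_2$) is both determined by, and may be prescribed via, its $\psi_4$-pairings against $F_2$; and since $\Phi_1$ is injective, an element of $F_1$ is determined by its $\psi_3$-pairings against $F_3$, so --- using (\ref{A.3}) in the form $\psi_3(d_2(z_2)\t x_3)=-\psi_4(z_2\cdot d_3(x_3))$ --- the differential identity (\ref{B.4}) can be checked after pairing into $F_3$.

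For fixed $x_1,y_1\in F_1$ the assignment $x_2\mapsto\psi_3\big(x_1\t\psi_2^\dagger(y_1\t x_2)\big)-\psi_3\big(y_1\t\psi_2^\dagger(x_1\t x_2)\big)$ is $P$-linear in $x_2$; let $\mu(x_1\t y_1)\in F_2$ be the unique element whose $\psi_4$-pairing against $F_2$ is this functional. The bilinear map $\mu:T_2F_1\to F_2$ so obtained is alternating in the sense of \ref{2.1}.(\ref{2.1.c}), the defining expression changing sign under $x_1\leftrightarrow y_1$. I would then set
\[
\Psi_{1,3}:=\psi_1^\dagger+\mu,
\]
with $\psi_1^\dagger$ regarded as a map $T_2F_1\to F_2$ through the quotient $T_2F_1\to\bw^2F_1$; both summands factor through $\bw^2F_1$, so $\Psi_{1,3}:T_2F_1\to F_2$ is alternating, and it is characterized by
\[
\psi_4\big(\Psi_{1,3}(x_1\t y_1)\cdot x_2\big)=\psi_3\big(x_1\t\psi_2^\dagger(y_1\t x_2)\big)-\psi_3\big(y_1\t\psi_2^\dagger(x_1\t x_2)\big)+\psi_4\big(\psi_1^\dagger(x_1\w y_1)\cdot x_2\big).
\]

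It remains to verify that $\Psi_{1,3}$ is $3$-compatible with the data of \ref{A.1}. For (\ref{B.4}), I would evaluate the right-hand side of the characterization at $x_2=d_3(x_3)$: by (\ref{c}) and (\ref{A.2}) the first two terms contribute $2\big(d_1(y_1)\psi_3(x_1\t x_3)-d_1(x_1)\psi_3(y_1\t x_3)\big)$ in total, while by (\ref{A.3}) and (\ref{new b}) the third term contributes one further copy of $d_1(y_1)\psi_3(x_1\t x_3)-d_1(x_1)\psi_3(y_1\t x_3)$; thus the right-hand side equals $-\psi_3\big(3(d_1(x_1)y_1-d_1(y_1)x_1)\t x_3\big)$. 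Since the left-hand side equals $-\psi_3\big(d_2\Psi_{1,3}(x_1\t y_1)\t x_3\big)$ by (\ref{A.3}), injectivity of $\Phi_1$ gives (\ref{B.4}) with $N=3$. For (\ref{B.5}), I would set $y_1=d_2(x_2)$ in the characterization: by (\ref{d}) and (\ref{A.2}) the first term becomes $d_1(x_1)\psi_4(x_2^{(2)})$; rewriting $\psi_1^\dagger(x_1\w d_2(x_2))$ by (\ref{b}), and using $x_2\cdot x_2=2x_2^{(2)}$ in $D_2F_2$ together with (\ref{A.3}), the third term becomes $2d_1(x_1)\psi_4(x_2^{(2)})+\psi_3\big(d_2(x_2)\t\psi_2^\dagger(x_1\t x_2)\big)$, whose last summand cancels the second term of the characterization; the total is $3d_1(x_1)\psi_4(x_2^{(2)})$, which is (\ref{B.5}) with $N=3$.

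The only genuinely inventive step is the choice of the correction $\mu$, and that is where I expect the real work to be; everything after it is disciplined bookkeeping with the identities of assertion (\ref{Mar-12.a}) of Lemma~\ref{Mar-12} and the perfect pairings. The mechanism is as follows: the antisymmetrized expression $\psi_3(x_1\t\psi_2^\dagger(y_1\t x_2))-\psi_3(y_1\t\psi_2^\dagger(x_1\t x_2))$ is the natural alternating trilinear form obtainable from $\psi_2^\dagger$; it contributes the coefficient $2$ on the differential side; and the leftover term $\psi_3(d_2(x_2)\t\psi_2^\dagger(x_1\t x_2))$ it introduces into (\ref{B.5}) is cancelled precisely by adding back a single copy of $\psi_1^\dagger$, which at the same time supplies the remaining coefficient $1$. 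This is why the common factor is exactly $3$ in both (\ref{B.4}) and (\ref{B.5}), consistent with the description in the introduction of $\Psi_{1,3}$ as one modification of $\psi_1^\dagger$ making significant use of $\psi_2^\dagger$.
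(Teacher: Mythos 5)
Your proof is correct and, after unwinding the perfect pairings, constructs exactly the map the paper constructs: the paper's auxiliary map $\alpha$ of Data~\ref{data6} satisfies $\alpha = 2\psi_1^\dagger - \mu$ in your notation, so the paper's $\Psi_{1,3}=3\psi_1^\dagger-d_3\circ\gamma$ (with $d_3\circ\gamma=\alpha$ via the pairing) coincides with your $\psi_1^\dagger+\mu$. The only departure is presentational: you define the correction $\mu$ directly via $\Phi_2$ and verify (\ref{B.4}) by testing against $F_3$ through $\Phi_1$, whereas the paper folds $2\psi_1^\dagger$ into $\alpha$, checks $d_2\circ\alpha=0$, lifts $\alpha$ through $d_3$ to get $\gamma$ (Lemma~\ref{Mar-13}), and then reads (\ref{B.4}) off from $d_2\circ d_3=0$.
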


\begin{lemma}\label{2^n}
Adopt the  data of {\rm \ref{A.1}}.
Then there exists an alternating map $$\Psi_{1,2}:T_2F_1\to F_2$$ which is $N$-compatible with the data of {\rm \ref{A.1}}, where $N=2^n$ is some positive power of two. 
\end{lemma}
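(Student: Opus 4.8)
The plan is to construct $\Psi_{1,2}$ by an inductive process that fixes up a crude starting map one free generator of $F_1$ at a time, doubling the ``compatibility constant'' at each step and hence producing a map that is $2^n$-compatible, where $n = \rank F_1$. I would begin by producing a $1$-compatible map $\psi_1^{\la 0\ra}: F_1\t F_1\to F_2$ that satisfies the two differential conditions (\ref{B.4}) and (\ref{B.5}) with $N=1$ but is \emph{not} required to be alternating; this is the ``base case'' and I expect it to come from a diagram chase in the double complex $\Hom_P(F_1\t F\t F, F_4)$ exactly as flagged in the introduction (Observation~\ref{Y22.5}), using the exactness of $F$ and $F^\vee$ together with the perfect pairings $\psi_3$, $\psi_4$ of Data~\ref{A.1} to identify the relevant $F_2$- and $F_3$-valued maps by their pairings. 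The point of not demanding the alternating property at the outset is that (\ref{B.4}) forces $(d_2\circ\psi_1)$ to be alternating in $x_1,y_1$, so $\psi_1^{\la 0\ra}(x_1\t y_1) + \psi_1^{\la 0\ra}(y_1\t x_1)$ is a cycle — it lies in $\ker d_2 = \im d_3$ — and therefore can be named via $\psi_3$ and corrected, but only after one has enough room.

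Next I would run the inductive step (this is Proposition~\ref{prop29.5} in the paper's scheme): suppose $\psi_1^{\la i-1\ra}$ is $2^{n_{i-1}}$-compatible and its restriction to $G_{i-1}\t G_{i-1}$ factors through $\bw^2 G_{i-1}$, where $G_{i-1}$ is a rank-$(i-1)$ free summand of $F_1$; I want to enlarge $G_{i-1}$ to a rank-$i$ summand $G_i = G_{i-1}\p Pe_i$ and produce $\psi_1^{\la i\ra}$ that is $2^{n_i}$-compatible and alternating on $G_i\t G_i$. The obstruction to alternating-ness on the new generator is the symmetric part $\psi_1^{\la i-1\ra}(e_i\t g) + \psi_1^{\la i-1\ra}(g\t e_i)$ for $g\in G_{i-1}$ and the self-term $\psi_1^{\la i-1\ra}(e_i\t e_i)$; as noted above these are $d_2$-cycles (after using (\ref{B.4})), so each is $d_3$ of something, and I would \emph{define} the correction by giving its $\psi_3$- or $\psi_4$-pairings against $F_3$, $F_2$ respectively — this is the ``define a map into $F_2$ or $F_3$ by prescribing its pairing'' technique that Data~\ref{A.1} legitimizes. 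The doubling enters because to make the corrected map simultaneously satisfy (\ref{B.5}) and stay alternating one symmetrizes, i.e.\ replaces the relevant piece by itself plus its transpose, which contributes a factor of $2$ to the constant on that summand; carrying this out generator-by-generator over a basis-size set of steps yields the exponent $n \le \rank F_1$.

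Finally I would assemble: after $n = \rank F_1$ steps, $G_n = F_1$, so $\psi_1^{\la n\ra}: F_1\t F_1\to F_2$ factors through $\bw^2 F_1$ (it is alternating in the sense of \ref{2.1}.(\ref{2.1.c})) and is $2^n$-compatible with the data of \ref{A.1}; set $\Psi_{1,2} = \psi_1^{\la n\ra}$ and $N = 2^n$. I would verify that the two compatibility equations (\ref{B.4}) and (\ref{B.5}) are preserved under each modification — this amounts to checking that the correction terms I added, being in $\im d_3$, contribute the predicted multiples after applying $d_2$ and after pairing through $\psi_4$ — and that the $G_i$-alternating property is an increasing chain of conditions, so the final map is globally alternating. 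The main obstacle I anticipate is the inductive step: one must choose the correction on the $i$-th generator so that it kills the symmetric obstruction \emph{without} disturbing the already-achieved alternating behavior on $G_{i-1}\t G_{i-1}$ and without breaking (\ref{B.5}); keeping these three requirements compatible is what forces the symmetrization trick and the consequent doubling, and the bookkeeping that the factor is exactly a power of two (rather than, say, a product of various integers) is the delicate point. All the heavy computation supporting this step is exactly what Sections~\ref{base case} and \ref{Properties of the data.} are set up to supply, so here I would only cite those calculations.
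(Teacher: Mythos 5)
Your high-level plan matches the paper's: a base map from a diagram chase, then an inductive pass that enlarges a free summand of $F_1$ one rank at a time, building the corrections by prescribing their pairings against $\psi_3$ and $\psi_4$ and preserving compatibility at the cost of inflating $N$. The structure of Observation~\ref{Y22.5}, Proposition~\ref{prop29.5}, Claim~\ref{29.1.14}, and the material in Section~\ref{Properties of the data.} is essentially what you sketch. But the quantitative backbone of your proposal is wrong in two places, and these are not cosmetic.

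First, the base case is not $1$-compatible. The diagram chase over $\Hom(T_2F_1,F)$ supplies a map $\psi_1'$ satisfying~(\ref{B.4}) with $N=1$, but there is no reason for that map to satisfy~(\ref{B.5}) at all. The content of Observation~\ref{Y22.5} is to manufacture, via $\rho_1,\rho_2,\rho_3$ and an application of Lemma~\ref{write me}, a correction $d_3\circ\rho_3$ so that $\psi_1^{\la 0\ra}=2\psi_1'+d_3\circ\rho_3$ satisfies both~(\ref{B.4}) and~(\ref{B.5}), and the price of making~(\ref{B.5}) hold is precisely the factor $2$: the output is $2$-compatible, not $1$-compatible. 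Second, the inductive step does not simply double $N$. Proposition~\ref{prop29.5} passes from $N$ to $N'=2N^2$, because the corrected map is $\psi_1'=2N\psi_1-d_3\circ\NNY$, and the factor $2N$ (not $2$) is forced on you by the identities in Corollaries~\ref{X.20} and~\ref{X.27}, which are where the divided-power identity $x_2\cdot x_2=2x_2^{(2)}$ and the constant $N$ in~(\ref{B.5}) conspire to put $2N$ in front of the terms you need to cancel. Your ``symmetrization contributes a factor of $2$'' explanation therefore does not account for the mechanism; if you try to run the induction multiplying by $2$ alone, the $\lambda$-correction does not cancel against~(\ref{B.5}). Consequently the bound $n\le\rank F_1$ you claim is false: starting from $N_0=2$ and iterating $N\mapsto 2N^2$ over $\rank F_1$ steps produces an exponent that is doubly exponential in $\rank F_1$. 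This does not threaten the lemma, since $2N^2$ is a power of two whenever $N$ is, but it does mean the recursion you describe would not close up, and it signals that the real engine --- the interplay among $\chi$, $r$, and the specific formula~(\ref{29.5.4}) for $\NNY$ --- is not yet captured in your argument.
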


\begin{theorem}
\label{main}
 Adopt the  data of {\rm \ref{data30}}. Then $F$ has the structure of a $\DGGamma$-algebra which exhibits Poincar\'e duality. 
\end{theorem}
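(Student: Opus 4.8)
The plan is to obtain Theorem~\ref{main} by assembling Lemmas~\ref{Mar-12}, \ref{mult-table}, \ref{3}, and~\ref{2^n}, the only genuinely new ingredient being an elementary B\'ezout step. First I would apply Lemma~\ref{Mar-12} to the resolution $F$ of Data~\ref{data30} and fix, once and for all, the resulting homomorphisms $\psi_1^\dagger$, $\psi_2^\dagger$, $\psi_3$, $\psi_4$. By part~(\ref{Mar-12.a}) the maps $\psi_3$, $\psi_4$ satisfy~(\ref{A.2}) and~(\ref{A.3}); by part~(\ref{Mar-12.b}) they induce the isomorphisms~(\ref{perfectp}). Thus the hypotheses of Data~\ref{A.1} hold for these specific $\psi_3$, $\psi_4$, and we may invoke Lemmas~\ref{mult-table}, \ref{3}, and~\ref{2^n} with respect to this data.

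Next I would manufacture a $1$-compatible multiplication. Lemma~\ref{2^n} provides an alternating map $\Psi_{1,2}\colon T_2F_1\to F_2$ that is $2^n$-compatible with the data of~\ref{A.1} for some integer $n\ge 0$, and Lemma~\ref{3} provides an alternating $\Psi_{1,3}\colon T_2F_1\to F_2$ that is $3$-compatible. Since $\gcd(2^n,3)=1$, choose integers $a$, $b$ with $a\cdot 2^n+b\cdot 3=1$ and set $\Psi_1:=a\,\Psi_{1,2}+b\,\Psi_{1,3}$. The alternating homomorphisms form a $P$-submodule of $\Hom_P(T_2F_1,F_2)$, namely the kernel of composition with $\comult$, so $\Psi_1$ is again alternating. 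Moreover, each of the two defining equations~(\ref{B.4}) and~(\ref{B.5}) for ``$N$-compatible'' has the shape $L(\psi_1)=N\cdot c$ for an operator $L$ additive in $\psi_1$ and a map $c$ independent of both $\psi_1$ and $N$: for~(\ref{B.4}) one has $L(\psi_1)=d_2\circ\psi_1$, and for~(\ref{B.5}) the operator $L(\psi_1)$ is the map sending $x_1\t x_2$ to $\psi_4\big(\psi_1(x_1\t d_2(x_2))\cdot x_2\big)$, which is additive in $\psi_1$ because the product $F_2\times F_2\to D_2F_2$ is bilinear and $\psi_4$ is linear. Hence, for each of these two operators, $L(\Psi_1)=(a\cdot 2^n+b\cdot 3)\,c=c$, so $\Psi_1$ is $1$-compatible with the data of~\ref{A.1}.

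Now I would conclude. Define $\Psi_2\colon F_1\t F_2\to F_3$ by~(\ref{3.3.1})---legitimate because $F_3\to F_1^\vee$ in~(\ref{perfectp}) is an isomorphism---and apply Lemma~\ref{mult-table}: the quadruple $\Psi_1$, $\Psi_2$, $\psi_3$, $\psi_4$ endows $F$ with the $\DGGamma$-algebra structure displayed in~(\ref{3.3.2}). For Poincar\'e duality take $m=4$: since $F$ has length four, $F_i=0$ for $i>4$, and $F_4$ is free of rank one, so $F_4\cong P$. The multiplication $F_i\t F_{4-i}\to F_4$ is scalar multiplication by $P$ when $i\in\{0,4\}$, a perfect pairing because $F_4\cong P$; it is $\pm\psi_3$ when $i\in\{1,3\}$, perfect by the first paragraph; and for $i=2$ it is the symmetric pairing $x_2\t y_2\mapsto\psi_4(x_2\cdot y_2)$, whose induced map $F_2\to F_2^\vee$ is precisely the isomorphism $\Phi_2$ of~(\ref{perfectp}). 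So all these pairings are perfect and $F$ exhibits Poincar\'e duality.

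The only point in this assembly that needs care is that the $\mathbb Z$-linear combination $\Psi_1$ remains alternating and $1$-compatible, which is the linearity observation above. The substantive difficulties lie in the deferred lemmas; among them I expect Lemma~\ref{2^n} to be the main obstacle, since producing the $2^n$-compatible $\Psi_{1,2}$ requires an inductive modification carried out along a rank-increasing filtration of $F_1$, built up from the double-complex diagram chase that supplies the initial map.
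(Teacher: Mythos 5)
Your proposal is correct and follows the paper's own proof of Theorem~\ref{main} step for step: apply Lemma~\ref{Mar-12} to pass from Data~\ref{data30} to Data~\ref{A.1}, invoke Lemmas~\ref{3} and~\ref{2^n} to obtain $3$-compatible and $2^n$-compatible alternating maps, take a B\'ezout combination to produce a $1$-compatible alternating $\Psi_1$, and conclude via Lemma~\ref{mult-table}. You make explicit two points the paper leaves implicit --- that compatibility and alternation are $\mathbb Z$-linear conditions in $\psi_1$, and that Poincar\'e duality follows because the top-degree pairings in~(\ref{3.3.2}) are exactly the isomorphisms~(\ref{perfectp}) --- which is a sound reading of the intended argument rather than a departure from it.
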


Once Lemmas~\ref{mult-table}, \ref{3}, and \ref{2^n} are established in Sections~\ref{prove-mult-table},
 \ref{prove-3}, 
\ref{base case}, 
\ref{Properties of the data.}, and
\ref{Inductive step.}, then the proof of Theorem~\ref{main} follows readily.

\ms\noindent
{\it Proof of Theorem~{\rm\ref{main}}.} Fix homomorphisms $\psi_1^\dagger$, $\psi_2^\dagger$, $\psi_3$, and $\psi_4$ with all of the properties which are listed in Lemma~\ref{Mar-12}.
Lemma~\ref{Mar-12} guarantees that the data of \ref{data30} gives rise to the data of \ref{A.1}. Apply Lemmas \ref{3} and \ref{2^n} to obtain alternating maps $\Psi_{1,3}$ and $\Psi_{1,2}$ from $T_2F_1$ to $F_2$ which are $3$-compatible and $2^n$-compatible 
with the data of \ref{A.1}, respectively. There exist integers $a$ and $b$ with $3a+2^nb=1$. Let \begin{equation}\label{prove main}\Psi_1=a\Psi_{1,3}+b\Psi_{1,2}.\end{equation} It follows that $\Psi_1:T_2F_1\to F_2$ is an alternating map which is  $1$-compatible with the data of \ref{A.1}. The conclusion follows from Lemma~\ref{mult-table}. \hfill\qed

\begin{remark} Theorem~\ref{main} is already known if $P$ is Gorenstein and local and $F$ is a minimal resolution; see \cite{KM80,K87}. The purpose of the present paper is to remove these three unnecessary hypotheses. The proofs in \cite{KM80,K87} can probably be reconfigured in order to avoid the hypothesis that $P$ is Gorenstein. (In particular, the present paper does not depend on the results of the older papers and the argument in the present paper avoids this hypothesis without making any great effort.) The hypothesis that the resolution $F$ is minimal is clearly unnecessary. Indeed, if $P$ is local and $F$ is an arbitrary resolution of length four which is self-dual and has $F_0=P$, then $F$ is isomorphic to the direct sum of a minimal resolution plus a trivial resolution of the form
$$ 0\to E'\xrightarrow{\bmatrix 0\\1\endbmatrix} E\p E' \xrightarrow{\bmatrix 1&0\endbmatrix} E\to 0,$$ for some free $P$-modules $E$ and $E'$ of the same rank. (The right-most $E$ is in homological position one.)  It does no harm if we write $E^\vee=\Hom_P(E,F_4)$ in place of $E'$. It is not difficult to extend the $\DGGamma$-structure from the minimal resolution to $F$; see Example~\ref{Apr22}.

 The serious work in the proof of Theorem~\ref{main} is involved in removing the hypothesis ``local''. Of course, once Theorem~\ref{main} is completely established, then the result holds for all $\mathbb Z$-algebras. We wonder how generally the statement
$$ \text {$F_\mfp$ is a $\DGGamma$-algebra for all prime ideals $\mfp$}\implies \text{$F$ is a $\DGGamma$-algebra} $$holds.
\end{remark}

\begin{example}\label{Apr22} Let $P$ be a commutative Noetherian ring,
$$F:\quad 0\to F_4\xrightarrow{d_4}F_3\xrightarrow{d_3}F_2\xrightarrow{d_2} F_1\xrightarrow{d_1}F_0=P$$ be  a $\DGGamma$-algebra resolution by free $P$-modules  which exhibits Poincar\'e duality, and $E$ be a free $P$-module of finite rank.
Let $G$ be the complex $$G:\quad 0\to F_4\xrightarrow{g_4=\bmatrix d_4\\0\endbmatrix} \begin{matrix}F_3\\\p\\ E^\vee\end{matrix} \xrightarrow {g_3= \bmatrix d_3&0\\0&0\\0&1\endbmatrix }  \begin{matrix}F_2 \\\p\\ E\\\p\\E^\vee\end{matrix} \xrightarrow
{g_2=\bmatrix d_2&0&0\\0&1&0\endbmatrix}  \begin{matrix}F_1 \\\p\\ E\end{matrix} \xrightarrow{g_1=\bmatrix d_1&0\endbmatrix} F_0=P$$Then it is not difficult to check that the multiplication\begingroup\allowdisplaybreaks\begin{align*}
&G_1\t G_1\to G_2: &\bmatrix x_1\\e\endbmatrix \times  \bmatrix x_1'\\e'\endbmatrix &=\bmatrix x_1\cdot x_1'\\d_1(x_1)\cdot e'-d_1(x_1')\cdot e\\0\endbmatrix,
\\&\begin{matrix}G_2\t G_1\to G_3:\\
G_1\t G_2\to G_3:\end{matrix} &  \bmatrix x_2\\e'\\\e\endbmatrix\times \bmatrix x_1\\e\endbmatrix
=\bmatrix x_1\\e\endbmatrix \times  \bmatrix x_2\\e'\\\e\endbmatrix&=
\bmatrix x_1\cdot x_2-d_4(\e(e))\\d_1(x_1)\cdot \e\endbmatrix,
\\
&\begin{matrix}G_3\t G_1\to G_4:\\
G_1\t G_3\to G_4:\end{matrix} & 
  -\bmatrix x_3\\ \e\endbmatrix \times \bmatrix x_1\\e\endbmatrix
=\bmatrix x_1\\e\endbmatrix \times  \bmatrix x_3\\ \e\endbmatrix&= x_1\cdot x_3+\e(e), 
\\
&G_2\t G_2\to G_4:&\bmatrix x_2\\e\\\e\endbmatrix \times  \bmatrix x_2'\\e'\\\e'\endbmatrix&=x_2\cdot x_2'-\e'(e)-\e(e'),\text{ and}
\\
&D_2G_2\to G_4:&\bmatrix x_2\\e\\\e\endbmatrix^{(2)}&=x_2^{(2)}-\e(e)
\end{align*}\endgroup
gives $G$ the structure of a $\DGGamma$-algebra which exhibits Poincar\'e duality, for $x_i$ and $x_i'$ in $F_i$, $e$ and $e'$ in $E$, and $\ep$ and $\ep'$ in $E^\vee=\Hom_P(E,F_4)$.  
\end{example}

\section{The proof of Lemma \ref{mult-table}.}\label{prove-mult-table}
A version of this proof may also be found in  \cite{KM80}.

\begin{proof}The map $\Psi_1:T_2F_1\to F_2$ is an alternating map; consequently, we  write $\Psi_1(x_1\w y_1)$ instead of $\Psi_1(x_1\t y_1)$. It is clear that the proposed multiplication is graded-commutative. We demonstrate the differential, divided power, and associative properties
\begin{align*}
\operatorname{D}_{i,j}:& \quad d_{i+j}(x_i\times x_j)=d_i(x_i)\times x_j+(-1)^i x_i\times d_j(x_j),\\
\operatorname{DP}:&\quad d_4(x_2^{(2)})=d_2(x_2)\times x_2,\quad\text{and}\\
\operatorname{A}_{i,j,k}:&\quad (x_i\times x_j)\times x_k=x_i\times (x_j\times x_k),\end{align*}
with $x_\ell\in F_\ell$, for all relevant $i,j,k$.

\ms \noindent Property $\operatorname{D}_{1,1}$ is a consequence of (\ref{B.4}) because $N=1$.

\ms \noindent Property $\operatorname{D}_{1,2}$ is equivalent to
$$d_1(x_1)\cdot \psi_4(x_2\cdot y_2)- \psi_4\Big(\Psi_1(x_1\w d_2(x_2))\cdot y_2\Big)=\psi_4\Big(d_3(\Psi_2(x_1\t x_2))\cdot y_2\Big).$$
Observe that
\begingroup\allowdisplaybreaks\begin{align*}
&\phantom{{}+{}}\psi_4\Big((d_3\circ\Psi_2)(x_1\t x_2)\cdot y_2\Big)\\={}&-\psi_3\Big(d_2(y_2)\t \Psi_2(x_1\t x_2)\Big),&&\text{by (\ref{A.3}),}\\
{}={}&-\psi_4\Big(\Psi_1\big(d_2(y_2)\w x_1\big)\cdot x_2\Big),&&\text{by (\ref{3.3.1}),} \\
{}={}&d_1(x_1)\cdot \psi_4(x_2\cdot y_2)-\psi_4\Big(\Psi_1\big(x_1\w d_2(x_2)\big)\cdot y_2\Big),&&\text{by (\ref{B.5})}.
\end{align*}\endgroup

\ms \noindent Property $\operatorname{D}_{1,3}$ is equivalent to
$$d_1(x_1)\cdot \psi_3(y_1\t x_3) -\psi_3\Big(y_1\t \Psi_2(x_1\t d_3(x_3))\Big)=\psi_3\Big(y_1\t d_4(\psi_3(x_1\t x_3))\Big).$$

Observe that 
\begin{align*}
&\psi_3\Big(y_1\t \Psi_2\big(x_1\t d_3(x_3)\big)\Big)\\
={}&\psi_4\Big(\Psi_1(y_1\w x_1)\cdot d_3(x_3)\Big),&&\text{by (\ref{3.3.1}),}\\
={}&-\psi_3\Big((d_2\circ \Psi_1)(y_1\w x_1)\t x_3\Big),&&\text{by (\ref{A.3}),}\\
={}&-d_1(y_1)\cdot \psi_3(x_1\t x_3)
+d_1(x_1)\cdot \psi_3(y_1\t x_3),
&&\text{by (\ref{B.4}).}\end{align*}
On the other hand, 
$$\psi_3\Big(y_1\t d_4(\psi_3(x_1\t x_3))\Big)=d_1(y_1)\cdot \psi_3(x_1\t x_3)$$ by (\ref{A.2}).

\ms \noindent Property $\operatorname{D}_{1,4}$ is a consequence of (\ref{A.2}).

\ms \noindent Property $\operatorname{D}_{2,2}$ is equivalent to
$$\psi_3\Big(x_1\t d_4(\psi_4(x_2\cdot y_2))\Big)=\begin{cases}
\phantom{+}\psi_3\Big(x_1\t\Psi_2(d_2(x_2)\t y_2)\Big)\\
+\psi_3\Big(x_1\t\Psi_2(d_2(y_2)\t x_2)\Big).\end{cases}$$
Observe that 
\begin{align*}&\psi_3\Big(x_1\t\Psi_2(d_2(x_2)\t y_2)\Big)
+\psi_3\Big(x_1\t\Psi_2(d_2(y_2)\t x_2)\Big)\\
{}={}&\psi_4\Big(\Psi_1(x_1\w d_2(x_2))\cdot y_2\Big)
+
\psi_4\Big(\Psi_1(x_1\w d_2(y_2))\cdot x_2\Big),&&\text{by (\ref{3.3.1})},\\
{}={}&d_1(x_1)\cdot \psi_4(x_2\cdot y_2),&&\text{by (\ref{B.5})},\\
{}={}&\psi_3\Big(x_1\t d_4(\psi_4(x_2\cdot y_2))\Big),&&\text{by (\ref{A.2}).}
\end{align*}
 
\ms \noindent Property $\operatorname{DP}$ is equivalent to
$$\psi_3\Big(x_1\t d_4(\psi_4(x_2^{(2)}))\Big)=\psi_3\Big(x_1\t\Psi_2(d_2(x_2)\t x_2)\Big).$$
Observe that 
\begingroup\allowdisplaybreaks\begin{align*}\psi_3\Big(x_1\t\Psi_2(d_2(x_2)\t x_2)\Big)
{}={}&\psi_4\Big(\Psi_1(x_1\w d_2(x_2))\cdot x_2\Big)
,&&\text{by (\ref{3.3.1})},\\
{}={}&d_1(x_1)\cdot \psi_4(x_2^{(2)}),&&\text{by (\ref{B.5})},\\
{}={}&\psi_3\Big(x_1\t d_4(\psi_4(x_2^{(2)}))\Big),&&\text{by (\ref{A.2}).}
\end{align*}\endgroup

\ms \noindent Property $\operatorname{D}_{2,3}$ is a consequence of (\ref{A.3}).

\ms \noindent Property $\operatorname{A}_{1,1,2}$ is a consequence of (\ref{3.3.1}).

\ms \noindent Property $\operatorname{A}_{1,1,1}$ is equivalent to
$$\psi_3\Big(w_1\t \Psi_2(x_1\t \Psi_1(y_1\w z_1))\Big)=\psi_3\Big(w_1\t \Psi_2(z_1\t \Psi_1(x_1\w y_1))\Big),$$ for all $x_1$, $y_1$, $z_1$, and $w_1$ in $F_1$.

Apply (\ref{3.3.1}) twice. It suffices to show that
$$\psi_4(\Psi_1(w_1\w x_1)\cdot \Psi_1(y_1\w z_1))=\psi_4(\Psi_1(w_1\w z_1)\cdot \Psi_1(x_1\w y_1)).$$

Let $A:F_1\t T_3(F_1)\to F_4$ be the homomorphism
\begin{align*}&A(w_1\t x_1\t y_1\t z_1)\\{}={}&\psi_4(\Psi_1(w_1\w x_1)\cdot \Psi_1(y_1\w z_1))-\psi_4(\Psi_1(w_1\w z_1)\cdot \Psi_1(x_1\w y_1)).\end{align*}
We will prove that $A$ is identically zero.
Observe that \begingroup\allowdisplaybreaks\begin{align*}
&A(d_2(w_2)\t x_1\t y_1\t z_1)\\
={}& \psi_4(\Psi_1(d_2(w_2)\w x_1)\cdot \Psi_1(y_1\w z_1))-\psi_4(\Psi_1(d_2(w_2)\w z_1)\cdot \Psi_1(x_1\w y_1))\\
{}={}& -\psi_4(\Psi_1(x_1\w d_2(w_2))\cdot \Psi_1(y_1\w z_1))+\psi_4(\Psi_1(z_1\w d_2(w_2))\cdot \Psi_1(x_1\w y_1))\\
{}={}&\begin{cases}\phantom{+}
\psi_4\Big(\Psi_1\big(x_1\w (d_2\circ \Psi_1)(y_1\w z_1)\big)\cdot w_2\Big)-d_1(x_1)\cdot \psi_4\Big(w_2\cdot \Psi_1(y_1\w z_1)\Big)\\
-\psi_4\Big(\Psi_1\big(z_1\w (d_2\circ \Psi_1)(x_1\w y_1)\big)\cdot w_2\Big)+d_1(z_1)\cdot \psi_4\Big(w_2\cdot \Psi_1(x_1\w y_1)\Big),
\end{cases}&&\intertext{by (\ref{B.5}),}
{}={}&\begin{cases}\phantom{+}
 d_1(y_1)\cdot \psi_4\Big(\Psi_1(x_1\w z_1)\cdot w_2\Big)
-d_1(z_1)\cdot \psi_4\Big(\Psi_1(x_1\w y_1)\cdot w_2\Big)\\
-d_1(x_1)\cdot \psi_4\Big(w_2\cdot \Psi_1(y_1\w z_1)\Big)\\
-d_1(x_1)\cdot \psi_4\Big(\Psi_1(z_1\w y_1)\cdot w_2\Big)
+d_1(y_1)\cdot \psi_4\Big(\Psi_1(z_1\w x_1)\cdot w_2\Big)\\
+d_1(z_1)\cdot \psi_4\Big(w_2\cdot \Psi_1(x_1\w y_1)\Big)
\end{cases}\\
{}={}&0.\end{align*}\endgroup The next-to-last equality is due to (\ref{B.4}).
Apply Remark~\ref{R1} to see that there exists a homomorphism $\widetilde{A}:T_3(F_1)\to F_4$, with
$$d_1(w_1)\cdot \widetilde{A}(x_1\t y_1\t z_1)=A(w_1\t  x_1\t y_1\t z_1),$$for $x_1,y_1,z_1,w_1\in F_1$. 
Notice, in particular, that 
\begin{align}\label{21.20.4}d_1(x_1)\cdot \widetilde{A}(x_1\t y_1\t z_1)={}&-\psi_4\Big(\Psi_1(x_1\w z_1)\cdot \Psi_1(x_1\w y_1)\Big)\\{}={}&d_1(z_1)\cdot\widetilde{A}(x_1\t x_1\t y_1)\notag &&\text{and}\\
\label{21.20.5} d_1(x_1)\cdot \widetilde{A}(x_1\t x_1\t y_1)={}&0.\end{align} 
The ideal $\im d_1$ has positive grade; hence there is an element $u_1$ in $F_1$ with $d_1(u_1)$ a regular element of $P$. Assume that $x_1\in F_1$ has the property that
 $d_1(x_1)$ is a regular element of $P$. The $P$-modules $F_4$ and $P$ are isomorphic; so, $d_1(x_1)$ is also regular on $F_4$. It follows from (\ref{21.20.4}) and (\ref{21.20.5}) that $\widetilde{A}(x_1\t y_1\t z_1)$ is zero for all $y_1,z_1\in F_1$. Furthermore, 
 \begin{align*}&d_1(x_1)\cdot \widetilde{A}(w_1\t z_1\t y_1)\\{}={}&\psi_4(\Psi_1(x_1\w w_1)\cdot \Psi_1(z_1\w y_1))-\psi_4(\Psi(x_1\w y_1)\cdot \Psi_1(w_1\w z_1))\\{}={}&d_1(w_1)\cdot \widetilde{A}(x_1\t y_1\t z_1)=0.\end{align*}
Thus, $\widetilde{A}(w_1\t z_1\t y_1)=0$ for all $w_1,z_1,y_1$ in $F_1$ and $A$ is identically zero.  \end{proof}

Remark~\ref{R1} is obvious. The only complication is that one must think for a moment before  arranging the data in the right order. We use this argument often and for that reason we record an explicit statement and proof.
\begin{remark}
\label{R1} 
Adopt the Data of {\rm \ref{A.1}}. If $Y$ is a free $P$-module and $\phi:Y\t F_1\to F_4$ is a homomorphism with $\phi(y\t d_2(x_2))=0$ for all $y\in Y$ and $x_2\in F_2$, then there exists a homomorphism $\widetilde{\phi}:Y\to F_4$ with \begin{equation}\label{R1.1}d_1(x_1)\cdot \widetilde{\phi}(y)=\phi(y\t x_1)\end{equation} for all $y\in Y$ and $x_1\in F_1$. \end{remark}

\begin{proof} The complex $\Hom(Y,F^\vee)$ is acyclic. The hypothesis guarantees that
\begin{align*}y\mapsto \phi(y\t -)\in{}& \ker\Big(d_2^\vee:\Hom(Y,F_1^\vee)\to \Hom(Y,F_2^\vee)\Big)\\
{}={}&\im\Big(d_1^\vee:\Hom(Y,F_0^\vee)\to \Hom(Y,F_1^\vee)\Big).\end{align*}Thus, there is a homomorphism $\phi':Y\to F_0^\vee$ with 
$$\phi(y\t x_1)=[\phi'(y)]\big(d_1(x_1)\big),$$ for $y\in Y$ and $x_1\in F_1$. Of course, $$[\phi'(y)]\big(d_1(x_1)\big)= d_1(x_1)\cdot[\phi'(y)](1).$$ Define 
$\widetilde{\phi}:Y\to F_4$ to be the homomorphism  $\widetilde{\phi}(y)=[\phi'(y)](1)$.
We have established that equation (\ref{R1.1}) holds.\end{proof}

\section{The proof of Lemma~\ref{3}.}\label{prove-3}
The proof of Lemma~\ref{3} closely follows the idea of the proof in \cite{K87}.

\begin{data}
\label{data6}Adopt the notation of Data~\ref{data30} and fix homomorphisms $\psi^\dagger_1$, $\psi^\dagger_2$, $\psi_3$, and $\psi_4$ with all of the properties which are listed in Lemma~\ref{Mar-12}. 
Define $\a:\bw^2F_1\to F_2$ by
\begin{equation}\psi_4\Big(x_2\cdot \a(x_1\w x_1')\Big) =\begin{cases}
\phantom{+}2\psi_4([\psi^\dagger_1(x_1\w x_1')]\cdot x_2)\\
+\psi_3(x_1'\t \psi^\dagger_2(x_1\t x_2))\\
-\psi_3(x_1\t \psi^\dagger_2(x_1'\t x_2)).\end{cases}\label{alpha}\end{equation}\end{data}

\begin{lemma}
\label{Mar-13} Adopt the data of {\rm\ref{data6}}.
\label{22.7.1} Then there exists a map $\ts\g: \bw^2F_1\to F_3$ such that 
$$\psi_3\Big(d_2x_2\t \g(x_1\w x_1')\Big)+ \psi_4\Big(x_2\cdot \a(x_1\w x_1')\Big)=0
,$$ for $x_1,x_1'\in F_1$ and $x_2\in F_2$.
\end{lemma}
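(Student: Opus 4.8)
The plan is to exhibit $\g$ via the dictionary provided by the perfect pairing $\psi_3$ of Lemma~\ref{Mar-12}(\ref{Mar-12.b}). Since $\Phi_3:F_3\to F_1^\vee$ is an isomorphism, giving a homomorphism $\g:\bw^2F_1\to F_3$ is the same as giving, for each pair $x_1\w x_1'$, the functional $\psi_3(-\t\g(x_1\w x_1'))$ on $F_1$. We are told what this functional must be when evaluated on elements of the form $d_2(x_2)$, namely $-\psi_4(x_2\cdot\a(x_1\w x_1'))$; so what I would do is define, for fixed $x_1,x_1'$, a homomorphism $\phi_{x_1,x_1'}:F_1\t F_2\to F_4$ that is forced to factor through $d_2$ in the second slot, apply Remark~\ref{R1} (with the roles of the tensor factors set up so that $d_2$ lands in the argument being killed), and read off $\g$.

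The key steps, in order. First, define $\phi:F_1\t(\bw^2F_1)\t F_2\to F_4$ by $\phi(w_1\t(x_1\w x_1')\t x_2)= -\psi_4(x_2\cdot\a(x_1\w x_1'))$ — wait, this does not involve $w_1$, so that is the wrong setup. Instead I would work one $x_1\w x_1'$ at a time: fix $x_1,x_1'\in F_1$ and consider the map sending $x_2\in F_2$ to the functional on $F_1$ given by $w_1\mapsto$ (something built from $\a$, $\psi_3$, $\psi_4$). The cleaner route is: define $B:(\bw^2F_1)\t F_2\to F_4$ by $B((x_1\w x_1')\t x_2) = \psi_4(x_2\cdot\a(x_1\w x_1'))$, and look for $\g(x_1\w x_1')\in F_3$ with $\psi_3(d_2(x_2)\t\g(x_1\w x_1')) = -B((x_1\w x_1')\t x_2)$; since $\psi_3(d_2(x_2)\t x_3) = -\psi_4(x_2\cdot d_3(x_3))$ by (\ref{A.3}), this is the same as asking $d_3(\g(x_1\w x_1')) = $ (the preimage of $B$ under $\psi_4(-\cdot-)$), provided that preimage exists and is a boundary. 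So the genuine plan has two sub-steps: (i) show that for each $x_1\w x_1'$ there is a unique $\beta(x_1\w x_1')\in F_2$ with $\psi_4(x_2\cdot\beta(x_1\w x_1')) = B((x_1\w x_1')\t x_2)$ for all $x_2$ — this is immediate, $\beta = \a$, by the very definition (\ref{alpha}); (ii) show that $d_2(\a(x_1\w x_1')) = 0$, i.e. $\a(x_1\w x_1')$ is a cycle in $F_2$, hence (by acyclicity, as $\a(x_1\w x_1')$ lies in $\ker d_2 = \im d_3$) a boundary $d_3(\g(x_1\w x_1'))$; then $\g$ can be chosen $P$-linearly in $x_1\w x_1'$ by a standard lifting-against-a-surjection argument ($F_3\twoheadrightarrow \ker d_2$ splits since $\bw^2F_1$ is free), and (\ref{A.3}) converts $d_3(\g) = \a$ into the asserted identity.

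So the heart of the matter — and the step I expect to be the main obstacle — is verifying $d_2\circ\a = 0$. This must be a computation: expand $d_2(\a(x_1\w x_1'))$ by pairing against $\psi_4(x_2\cdot-)$ for arbitrary $x_2$ (legitimate since $\Phi_2$ is an isomorphism, so $\psi_4(-\cdot-)$ detects elements of $F_2$), which reduces it to checking $\psi_4(d_2(x_2)\cdot\a(x_1\w x_1'))$ — no; rather, one checks that $\psi_4(x_2'\cdot d_2(\text{something}))$... The correct manoeuvre is: to show $\a(x_1\w x_1')\in\ker d_2$ it suffices (by Remark~\ref{R1}-type reasoning, or directly since $F_2\cong F_2^\vee$ via $\Phi_2$) to show $\psi_4(\a(x_1\w x_1')\cdot d_3(x_3)) = -\psi_3(d_2(\a(x_1\w x_1'))\t x_3) $ computes correctly, i.e. one feeds $x_2 = $ elements whose image under $d_2$ is being tested. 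Concretely: apply $d_2$, pair with $\psi_3(-\t x_3)$, use (\ref{A.3}) to rewrite as $\psi_4$ of a divided-power-type expression, then substitute the defining formula (\ref{alpha}) and collapse everything using the six differential identities (\ref{A.2})--(\ref{d}) from Lemma~\ref{Mar-12}(\ref{Mar-12.a}) — in particular (\ref{b}), (\ref{c}), (\ref{d}) relating $\psi^\dagger_1$, $\psi^\dagger_2$, $\psi_3$, $\psi_4$ — together with the symmetry of the divided power product in $D_2F_2$. This is precisely the kind of bookkeeping that the paper flags as routine-but-delicate; I would organize it by computing $\psi_4\bigl(d_3(x_3)\cdot\a(x_1\w x_1')\bigr)$ in two ways and matching terms, and I expect the coefficient $2$ in (\ref{alpha}) to be exactly what makes the cross-terms cancel.
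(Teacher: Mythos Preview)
Your proposal is correct and, once you finish self-correcting, it is essentially the paper's proof: show $d_2\circ\a=0$ by pairing $d_2(\a(x_1\wedge x_1'))$ against $\psi_3(-\t x_3)$, substitute (\ref{alpha}) via (\ref{A.3}), and collapse; then lift $\a$ through $d_3$ using acyclicity of $\Hom(\bw^2F_1,F)$ to obtain $\g$ with $d_3\circ\g=\a$, and finally apply (\ref{A.3}) once more to convert $d_3\circ\g=\a$ into the asserted identity. Two small remarks: the initial detour through $\Phi_3$ and Remark~\ref{R1} is unnecessary, and in the collapse the identities actually used are (\ref{A.3}), (\ref{c}), (\ref{new b}), and (\ref{A.2}) (not (\ref{b}) or (\ref{d})), with the coefficient $2$ in (\ref{alpha}) indeed being exactly what forces the cancellation.
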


\begin{proof}Observe first that
$d_2\circ \a$ is identically zero.
Indeed, if $x_1,x_1'\in F_1$ and $x_3\in F_3$, then 
\begingroup \allowdisplaybreaks
\begin{align*}&
\phantom{{}-{}}\psi_3\Big((d_2\circ \a)(x_1\w x_1')\t x_3\Big)
\\{}={}& 
-\psi_4\Big(d_3(x_3)\cdot \a(x_1\w x_1')\Big),&&\text{by (\ref{A.3})},\\
{}={}& 
-\begin{cases}
\phantom{+}2\psi_4\Big([\psi^\dagger_1(x_1\w x_1')]\cdot d_3(x_3)\Big)\\
+\psi_3\Big(x_1'\t\psi^\dagger_2\big(x_1\t d_3(x_3)\big)\Big)\\
-\psi_3\Big(x_1\t \psi^\dagger_2\big(x_1'\t d_3(x_3)\big)\Big),\end{cases}&&\text{by (\ref{alpha}),}
\\
{}={}&-\begin{cases}
-2\psi_3\Big((d_2\circ\psi^\dagger_1)(x_1\w x_1')\t x_3\Big)\\
+\psi_3\Big(x_1'\t [d_1(x_1)\cdot x_3-(d_4\circ \psi_3)(x_1\t x_3))]\Big)\\
-\psi_3\Big(x_1\t [d_1(x_1')\cdot x_3-(d_4\circ \psi_3)(x_1'\t x_3))]\Big),
\end{cases}&&\text{by (\ref{A.3}) and (\ref{c}),}\\
{}={}&-\begin{cases}
-2d_1(x_1)\cdot\psi_3( x_1'\t x_3)\\
+2d_1(x_1')\cdot\psi_3( x_1\t x_3)\\
+
d_1(x_1)\cdot \psi_3(x_1'\t x_3)-d_1(x_1')\cdot\psi_3(x_1\t x_3)\\
-d_1(x_1')\cdot \psi_3(x_1\t x_3)+d_1(x_1)\cdot\psi_3(x_1'\t x_3).
\end{cases}\end{align*}\endgroup
The last equality is due to
(\ref{new b}) and (\ref{A.2}).
It is now clear that 
$$\psi_3((d_2\circ \a)(x_1\w x_1')\t x_3)=0$$ and
$d_2\circ \a$ is identically zero.

The complex $\Hom(\bw^2F_1, F)$ is acyclic; so there exists $\gamma:\bw^2F_1\to F_3$ such that
\begin{equation}d_3\circ \g=\a.\label{Mar-13.2}\end{equation} 
Observe that
\begin{align*}\psi_3\Big(d_2(x_2)\t \g(x_1\w x_1')\Big)={}&-\psi_4\Big(x_2\cdot (d_3\circ\g)(x_1\w x_1')\Big),&&\text{by (\ref{A.3}),}\\{}={}&{}-\psi_4\Big(x_2\cdot \a(x_1\w x_1')\Big),&&\text{by (\ref{Mar-13.2}).}\end{align*}
\vskip-28pt\end{proof}
 
\begin{lemma}
\label{5.2} Adopt the notation of {\rm\ref{data6}} and Lemma~{\rm\ref{Mar-13}}. Then
\begin{equation}\label{22.7.2}
3d_1(x_1)\cdot \psi_4(x_2^{(2)})=\psi_4((3\psi^\dagger_1-d_3\circ \g)(x_1\w d_2 x_2)\cdot x_2).\end{equation}
\end{lemma}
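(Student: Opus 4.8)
The target equation (\ref{22.7.2}) is an identity in $F_4$, and since $F_4\cong P$ and $\im d_1$ contains a regular element, it is enough to verify (\ref{22.7.2}) after multiplying both sides by $d_1(w_1)$ for an arbitrary $w_1\in F_1$; equivalently, since $\psi_3$ induces the perfect pairing $F_1\to F_3^\vee$ of (\ref{perfectp}), it suffices to prove the corresponding identity after applying $\psi_3(w_1\t -)$ to a lift, or more directly to compute $d_1(w_1)\cdot(\text{RHS}-\text{LHS})$ and see it vanishes. So the plan is: fix $w_1\in F_1$, $x_1\in F_1$, $x_2\in F_2$, and unravel $d_1(w_1)$ times the right-hand side of (\ref{22.7.2}) using the defining relation (\ref{alpha}) for $\a$ together with the product rules (\ref{A.2}), (\ref{A.3}), (\ref{new b}), (\ref{b}), (\ref{c}), (\ref{d}) from Lemma~\ref{Mar-12} and Lemma~\ref{Mar-13}, and check it equals $3d_1(w_1)d_1(x_1)\cdot\psi_4(x_2^{(2)})$.

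\textbf{Key steps.} First I would rewrite the right-hand side of (\ref{22.7.2}) as $3\psi_4(\psi^\dagger_1(x_1\w d_2x_2)\cdot x_2) - \psi_4((d_3\circ\g)(x_1\w d_2x_2)\cdot x_2)$. For the second term, use $d_3\circ\g=\a$ (equation (\ref{Mar-13.2})) and then the defining formula (\ref{alpha}) with $x_1'$ replaced by $d_2x_2$: this expresses $\psi_4(x_2\cdot\a(x_1\w d_2x_2))$ as $2\psi_4(\psi^\dagger_1(x_1\w d_2x_2)\cdot x_2) + \psi_3(d_2x_2\t\psi^\dagger_2(x_1\t x_2)) - \psi_3(x_1\t\psi^\dagger_2(d_2x_2\t x_2))$. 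Substituting, the two ``$\psi^\dagger_1$'' contributions combine to give $(3-2)\psi_4(\psi^\dagger_1(x_1\w d_2x_2)\cdot x_2) = \psi_4(\psi^\dagger_1(x_1\w d_2x_2)\cdot x_2)$, so the right-hand side of (\ref{22.7.2}) becomes
$$\psi_4(\psi^\dagger_1(x_1\w d_2x_2)\cdot x_2) - \psi_3(d_2x_2\t\psi^\dagger_2(x_1\t x_2)) + \psi_3(x_1\t\psi^\dagger_2(d_2x_2\t x_2)).$$
Now handle the three remaining terms. For $\psi_3(x_1\t\psi^\dagger_2(d_2x_2\t x_2))$, apply (\ref{c}) in the form $\psi_3(x_1\t\psi^\dagger_2(d_2x_2\t -))$... actually use (\ref{d}): $(d_4\circ\psi_4)(x_2^{(2)}) = \psi^\dagger_2(d_2x_2\t x_2)$, so $\psi_3(x_1\t\psi^\dagger_2(d_2x_2\t x_2)) = \psi_3(x_1\t (d_4\circ\psi_4)(x_2^{(2)})) = d_1(x_1)\cdot\psi_4(x_2^{(2)})$ by (\ref{A.2}). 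For $\psi_3(d_2x_2\t\psi^\dagger_2(x_1\t x_2))$, apply (\ref{A.3}) (with $x_3 = \psi^\dagger_2(x_1\t x_2)$): $\psi_3(d_2x_2\t\psi^\dagger_2(x_1\t x_2)) = -\psi_4(x_2\cdot d_3(\psi^\dagger_2(x_1\t x_2)))$, then expand $d_3\circ\psi^\dagger_2$ via (\ref{b}) as $d_1(x_1)\cdot x_2 - \psi^\dagger_1(x_1\w d_2x_2)$, obtaining $-\psi_4(x_2\cdot(d_1(x_1)\cdot x_2 - \psi^\dagger_1(x_1\w d_2x_2))) = -d_1(x_1)\cdot\psi_4(x_2^{(2)}) + \psi_4(x_2\cdot\psi^\dagger_1(x_1\w d_2x_2))$. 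Plugging these two evaluations back in, the right-hand side of (\ref{22.7.2}) collapses to $\psi_4(\psi^\dagger_1(x_1\w d_2x_2)\cdot x_2) - (-d_1(x_1)\psi_4(x_2^{(2)}) + \psi_4(x_2\cdot\psi^\dagger_1(x_1\w d_2x_2))) + d_1(x_1)\psi_4(x_2^{(2)}) = 2d_1(x_1)\cdot\psi_4(x_2^{(2)})$, which is off by a factor of... I would recheck signs; most likely a sign in (\ref{alpha}) or an orientation in $x_1\w d_2x_2$ versus $d_2x_2\w x_1$ supplies the missing $d_1(x_1)\cdot\psi_4(x_2^{(2)})$ to reach $3d_1(x_1)\cdot\psi_4(x_2^{(2)})$. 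Since $F_4\cong P$ and there is a regular element in $\im d_1$, multiplying by $d_1(w_1)$ was harmless, but in fact the computation above already lives in $F_4$ directly, so no such reduction is needed.

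\textbf{Main obstacle.} The computation is purely formal — a bookkeeping exercise substituting the six product rules — so the real difficulty is entirely in getting signs and the skew-symmetrization of $\psi^\dagger_1$ right: the map $\psi^\dagger_1$ is defined on $\bw^2F_1$, so terms like $\psi^\dagger_1(x_1\w d_2x_2)$ and $\psi^\dagger_1(d_2x_2\w x_1)$ differ by a sign, and the formula (\ref{alpha}) must be applied with the arguments in the correct slots. I expect the coefficient $3$ to emerge precisely as $2+1$ (the $2$ from the explicit $2\psi^\dagger_1$ in (\ref{alpha}), the $1$ from the $\psi^\dagger_1$ term created when expanding $d_3\circ\psi^\dagger_2$ via (\ref{b})), with the three $\psi_3$-terms pairing off against $d_1(x_1)\cdot\psi_4(x_2^{(2)})$ via (\ref{A.2}), (\ref{A.3}), (\ref{b}), (\ref{d}).
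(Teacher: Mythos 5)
Your approach is essentially the paper's: substitute the defining identity for $\a$ (after using $d_3\circ\gamma=\a$) into $\psi_4((3\psi_1^\dagger - d_3\circ\gamma)(x_1\w d_2x_2)\cdot x_2)$ and unwind the three resulting $\psi_3$-terms via (\ref{A.2}), (\ref{A.3}), (\ref{b}), and (\ref{d}). The only cosmetic difference is that the paper routes the $(d_3\circ\gamma)$-term through the statement of Lemma~\ref{Mar-13} (which is itself just (\ref{A.3}) applied to (\ref{Mar-13.2})), whereas you use (\ref{Mar-13.2}) directly; those are the same step.

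The place where your computation fails to close is not a sign, and it is not an ordering of $x_1\w d_2x_2$. When you expand
\[
\psi_3\bigl(d_2x_2\t\psi^\dagger_2(x_1\t x_2)\bigr)=-\psi_4\bigl(x_2\cdot[d_1(x_1)\cdot x_2-\psi^\dagger_1(x_1\w d_2x_2)]\bigr),
\]
the first term produces $-d_1(x_1)\cdot\psi_4(x_2\cdot x_2)$, and in $D_2F_2$ one has $x_2\cdot x_2=2x_2^{(2)}$, so this contributes $-2d_1(x_1)\cdot\psi_4(x_2^{(2)})$ rather than the $-d_1(x_1)\cdot\psi_4(x_2^{(2)})$ you wrote. (The paper flags exactly this: ``The final equality makes use of the fact that $x_2\cdot x_2=2x_2^{(2)}$ in $D_2(F_2)$.'') Correcting that, the two $\psi^\dagger_1$-terms cancel by commutativity of $D_2F_2$, and the remaining scalars give $2d_1(x_1)\cdot\psi_4(x_2^{(2)})+d_1(x_1)\cdot\psi_4(x_2^{(2)})=3d_1(x_1)\cdot\psi_4(x_2^{(2)})$, as required. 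With that one repair, your argument is complete; your opening discussion of multiplying by $d_1(w_1)$ was unnecessary, as you yourself noted, since the whole computation already takes place in $F_4$.
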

\begin{proof}
Observe that
\begingroup\allowdisplaybreaks\begin{align*}&-\psi_4\Big((d_3\circ \g)\big(x_1\w d_2(x_2)\big)\cdot x_2\Big)\\
{}={}& 
+\psi_3\Big(d_2(x_2)\t \g\big(x_1\w d_2(x_2)\big)\Big),
&&\text{by (\ref{A.3})},\\
{}={}& -\psi_4\Big(x_2\cdot \a\big(x_1\w d_2(x_2)\big)\Big),&&\text{by Lemma~\ref{22.7.1},}\\
{}={}&\begin{cases} 
-2\psi_4\Big(\big[\psi^\dagger_1\big(x_1\w d_2(x_2)\big)\big]\cdot x_2\Big)\\
-\psi_3\Big(d_2(x_2)\t \psi^\dagger_2(x_1\t x_2)\Big)\\
+\psi_3\Big(x_1\t \psi^\dagger_2\big(d_2(x_2)\t x_2\big)\Big),
\end{cases}&&\text{by (\ref{alpha}),}\\
{}={}&\begin{cases} 
-2\psi_4\Big(\big[\psi^\dagger_1\big(x_1\w d_2(x_2)\big)\big]\cdot x_2\Big)\\
+\psi_4\Big(x_2\cdot (d_3\circ\psi^\dagger_2)(x_1\t x_2)\Big)\\
+\psi_3\Big(x_1\t (d_4\circ\psi_4)(x_2^{(2)})\Big),
\end{cases}&&\text{by (\ref{A.3}) and (\ref{d})},\\
{}={}&\begin{cases} 
-2\psi_4\Big(\big[\psi^\dagger_1\big(x_1\w d_2(x_2)\big)\big]\cdot x_2\Big)\\
+\psi_4\Big(x_2\cdot \big[d_1(x_1)\cdot x_2-\psi^\dagger_1\big(x_1\w d_2(x_2)\big)\big]\Big)\\
+d_1(x_1)\cdot \psi_4(x_2^{(2)}),
\end{cases}&&\text{by (\ref{b}) and (\ref{A.2})},\\
{}={}&\begin{cases}-3\psi_4\Big(\psi^\dagger_1\big(x_1\w d_2 (x_2)\big)\cdot x_2\Big)\\ 
+3d_1(x_1)\cdot\psi_4(x_2^{(2)}).
\end{cases}
\end{align*}\endgroup
The final equality makes use of the fact that 
$x_2\cdot x_2=2x_2^{(2)}$ in $D_2(F_2)$. One concludes that
$$\psi_4\Big((3\psi^\dagger_1-d_3\circ \g)(x_1\w d_2 (x_2))\cdot x_2\Big)=3d_1(x_1)\cdot\psi_4(x_2^{(2)}),$$ which is (\ref{22.7.2}).
\end{proof}

\begin{chunk}
{\it Proof of Lemma~{\rm\ref{3}.}}
Let $\Psi_{1,3}:\bw^2F_1\to F_2$ be the alternating map $3\psi_1^\dagger-d_3\circ \gamma$. Take $N=3$. It is clear that 
(\ref{B.4}) is satisfied and it is shown in Lemma~\ref{5.2} that (\ref{B.5}) is also satisfied.
\hfill \qed
\end{chunk}

\section{The base case for Lemma~\ref{2^n}.}\label{base case} We use a complicated inductive modification process to prove Lemma~\ref{2^n}. Preliminary calculations for this modification process are made in Section~\ref{Properties of the data.}; the process itself is carried out in Section~\ref{Inductive step.}. In the present section we establish the base case. This argument is straightforward.

\begin{observation} 
\label{Y22.5}Adopt Data {\rm\ref{A.1}}. Then there  exists a $P$-module homomorphism  $$\ts \psi_1^{\la 0\ra}:T_2F_1 \to F_2$$ which is $2$-compatible with the data of {\rm\ref{A.1}} in the sense of Definition~{\rm \ref{B.1}}. \end{observation}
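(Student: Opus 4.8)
The plan is to construct $\psi_1^{\la 0\ra}$ by a diagram chase in the double complex $\Hom_P(F_1\t F\t F, F_4)$, mimicking the technique used in Lemma~\ref{one}. First I would produce a ``naive'' candidate directly from the comparison theorem. Consider the complex $F_1\t F\t F$; since $F$ is acyclic with $\HH_0(F)=P/\im d_1$, the tensor-square $F\t F$ resolves $P/\im d_1$ up to the usual Tor-corrections, and tensoring with the free module $F_1$ keeps everything a complex of free modules. The map $F\t F\to F$ coming from \emph{any} chain-level lift of the multiplication $P\t P\to P$ gives, after tensoring with $F_1$ and composing with $\psi_3$ and $\psi_4$ in the appropriate degrees, a tentative pairing $F_1\t F_1\to F_2$. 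The point of working in $\Hom_P(F_1\t F\t F,F_4)$ rather than chasing in $F$ itself is that the target $F_4$ is where both defining identities (\ref{B.4}) and (\ref{B.5}) of $N$-compatibility land after applying the perfect pairings $\psi_3,\psi_4$ of Data~\ref{A.1}.

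The key steps, in order, would be: (1) write down a chain map extending multiplication on a resolution built from $F_1\t F\t F$, naming its relevant components $\psi_1^{\la 0\ra}$, together with auxiliary components landing in $F_3$; (2) read off from the chain-map equations that $d_2\circ\psi_1^{\la 0\ra}$ agrees with $d_1(x_1)\cdot y_1 - d_1(y_1)\cdot x_1$ up to a correction term that is a boundary; (3) absorb that correction using acyclicity of an appropriate $\Hom$ complex, exactly as $\sigma$ was used in Remark~\ref{two} and Lemma~\ref{three}; (4) verify the second compatibility identity (\ref{B.5}), again by pushing the discrepancy through $\psi_4$ and recognizing it as killed by a boundary because the pairings are perfect. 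The factor of $2$ appears because $x_2\cdot x_2 = 2x_2^{(2)}$ in $D_2F_2$ — the same phenomenon exploited at the end of Lemma~\ref{5.2} — so when one symmetrizes the tentative pairing (to arrange it factors through $T_2F_1$ in the way (\ref{B.4}) and (\ref{B.5}) demand) the answer naturally comes out doubled, hence $N=2$.

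The main obstacle I expect is \textbf{step (4)}: showing that the symmetrized pairing satisfies (\ref{B.5}), i.e. $\psi_4(\psi_1^{\la 0\ra}(x_1\t d_2(x_2))\cdot x_2) = 2\,d_1(x_1)\cdot\psi_4(x_2^{(2)})$. Identity (\ref{B.4}) is a degree-$2$ statement and should fall out almost immediately from the chain-map relations. But (\ref{B.5}) mixes $d_2$, the multiplication $\psi_4$, and the divided power $x_2^{(2)}$, so it is genuinely a statement about how the tentative product interacts with the divided-power structure on $D_2F_2$; making it come out with the correct coefficient $2$ (and not, say, $1$ or $4$) requires choosing the lift $F\t F\to F$ compatibly with divided powers in homological degree $4$ — concretely, lifting $x_2\mapsto x_2^{(2)}$ rather than merely lifting $x_2\t x_2$. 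This is precisely the ``kill even cycles with divided-power variables'' half of the Tate-style argument already flagged in the discussion before Lemma~\ref{one}, so the remedy is to build the resolution $F_1\t F\t F$'s analogue using $D_2F_2$ in the relevant slot, run the comparison theorem there, and then the coefficient $2$ is forced. Once the lift is set up with divided powers, (\ref{B.5}) should reduce to a short computation using (\ref{A.2}), (\ref{A.3}), and acyclicity of $\Hom_P(F_1\t F, F_4)$, closing the argument.
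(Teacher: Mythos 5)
Your overall shape is right — the double complex $\Hom_P(F_1\t F\t F,F_4)$ is indeed the engine, and the factor $2$ does trace back to $x_2\cdot x_2=2x_2^{(2)}$ in $D_2F_2$ — but the remedy you propose for the hard step (\ref{B.5}) would not work, and you have misidentified where the difficulty lies.

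The problem with ``lift $x_2\mapsto x_2^{(2)}$ and run the comparison theorem'' is that this is precisely what Lemma~\ref{one} already does: the complex $G$ there has a $D_2F_2$ summand in degree $4$, and the resulting $\psi_1^\dagger$ is even alternating and satisfies the differential condition (\ref{new b}). Yet $\psi_1^\dagger$ is \emph{not} $1$-compatible in general, and the entire machinery of Sections~\ref{prove-3}--\ref{Inductive step.} exists to repair that. The reason a comparison-theorem chain map cannot deliver (\ref{B.5}) is that, together with (\ref{B.4}) and the alternating property, (\ref{B.5}) is equivalent to associativity of the resulting product (see how it drives $\operatorname{D}_{1,2}$, $\operatorname{DP}$, and $\operatorname{A}_{1,1,2}$ in Section~\ref{prove-mult-table}), and the comparison theorem gives associativity only up to homotopy. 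No cleverness about which slot holds $D_2F_2$ changes this; (\ref{B.5}) is simply not a chain-map identity in $\Hom(G,F)$. Also, your steps (2)--(3) are a red herring: once $\psi_1'$ satisfies (\ref{B.4}) with $N=1$ (which follows directly from acyclicity of $\Hom(T_2F_1,F)$, no correction required), any subsequent modification of the form $d_3\circ\rho$ leaves (\ref{B.4}) unchanged because $d_2\circ d_3=0$; and nothing needs to ``factor through $T_2F_1$'' — that is the source, and $\psi_1^{\la 0\ra}$ is not expected to be alternating here.

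The actual mechanism the paper uses is: measure the failure of (\ref{B.5}) by the explicit symmetric map
$$\rho_1(x_1\t x_2\t y_2)=\psi_4\big(\psi_1'(x_1\t d_2x_2)\cdot y_2\big)+\psi_4\big(\psi_1'(x_1\t d_2y_2)\cdot x_2\big)-d_1(x_1)\cdot\psi_4(x_2\cdot y_2);$$
check by a direct computation with (\ref{A.3}) and (\ref{B.4}) that $\rho_1$ vanishes whenever its second (hence, by symmetry, third) argument is a $d_3$-boundary; invoke the exactness of $\Hom(F_1\t F\t F,F_4)$ in the relevant spot (Lemma~\ref{write me}) to produce $\rho_2:T_3F_1\to F_4$ with $\rho_1=\rho_2\circ(1\t d_2\t d_2)$; convert $\rho_2$ to $\rho_3:T_2F_1\to F_3$ via the perfect pairing $\psi_3$; and set $\psi_1^{\la 0\ra}=2\psi_1'+d_3\circ\rho_3$. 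Then $\psi_4\big((d_3\circ\rho_3)(x_1\t d_2x_2)\cdot x_2\big)=-\rho_1(x_1\t x_2\t x_2)$ cancels the defect of $2\psi_1'$ exactly, leaving $d_1(x_1)\cdot\psi_4(x_2\cdot x_2)=2\,d_1(x_1)\cdot\psi_4(x_2^{(2)})$, which is how $N=2$ appears — not from choosing a divided-power lift, but from the $2$ that survives the cancellation. So: right double complex, right numerology, but a different and essential construction ($\rho_1\to\rho_2\to\rho_3$ and the boundary correction $d_3\circ\rho_3$) is needed where you instead proposed to tweak the comparison-theorem input.
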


\begin{proof}
The complex $\Hom(T_2F_1,F)$ is exact and the homomorphism $$(d_1\t 1-1\t d_1):T_2F_1\to F_1$$is in the kernel of $d_{1*}$. Consequently, there is a homomorphism $\psi_1':T_2F_1\to F_2$  which satisfies (\ref{B.4}) with $N=1$.
Define $\rho_1:F_1\t T_2F_2\to F_4$ by
\begin{equation}\label{C.1}\rho_1(x_1\t x_2\t y_2)=\begin{cases}\phantom{+}\psi_4\Big(\psi_1'(x_1\t d_2(x_2))\cdot y_2\Big)\\
+\psi_4\Big(\psi_1'(x_1\t d_2(y_2))\cdot x_2\Big)\\
-d_1(x_1)\cdot \psi_4\Big(x_2\cdot y_2\Big).\end{cases}\end{equation}
Observe that 
\begin{align*}
&\phantom{-}\rho_1(x_1\t d_3(x_3)\t y_2)\\
{}={}&\phantom{-}\psi_4\Big(
\psi_1'(x_1\t d_2(y_2))\cdot d_3(x_3)
-d_1(x_1)\cdot d_3(x_3)\cdot y_2\Big)\\
{}={}&-\psi_3\Big(
(d_2\circ\psi_1')(x_1\t d_2(y_2))\t x_3)
-d_1(x_1)\cdot d_2(y_2)\t x_3\Big),&&\text{by (\ref{A.3}),}\\
{}={}&-\psi_3\Big(
d_1(x_1)\cdot  d_2(y_2)\t x_3)
-d_1(x_1)\cdot d_2(y_2)\t x_3\Big),&&\text{by (\ref{B.4}),}\\{}={}&\phantom{-}0.\end{align*}The homomorphism $\rho_1$ is symmetric in its second and third arguments; so,
$$\rho_1(-\t-\t d_3(-)):F_1\t F_2\t F_3\to F_4$$ is also identically zero. It follows from Lemma~\ref{write me} that there is a homomorphism 
$$\rho_2: T_3(F_1)\to F_4$$ with
\begin{equation}\rho_1(x_1\t x_2\t y_2)=\rho_2(x_1\t d_2(x_2)\t d_2(y_2)).\label{YD12-10-7-25}
\end{equation}Define $$\rho_3:T_2(F_1)\to F_3$$ by
\begin{equation}\label{YD12-10-7-20}\psi_3(x_1''\t \rho_3(x_1\t x_1'))= \rho_2(x_1\t x_1'\t x_1'').\end{equation}

Observe that 
\begin{equation}\label{C.2}\psi_4\Big(x_2\cdot (d_3\circ \rho_3)(x_1\t d_2(x_2))\Big)=-\rho_1(x_1\t x_2\t x_2).\end{equation}
Indeed,
\begin{align*}
& \phantom{{}-{}}\psi_4\Big(x_2\cdot (d_3\circ \rho_3)(x_1\t d_2(x_2))\Big)\\
{}={}&-\psi_3\Big(d_2(x_2)\t  \rho_3(x_1\t d_2(x_2))\Big),&&\text{by (\ref{A.3}),}\\
{}={}&-\rho_2(x_1\t d_2(x_2)\t d_2(x_2)),&&\text{by (\ref{YD12-10-7-20}),}\\
{}={}&-\rho_1(x_1\t x_2\t x_2),&&\text{by (\ref{YD12-10-7-25}).}
\end{align*} 
 Define $\psi_1^{\la 0\ra}:T_2(F_1)\to F_2$ to be the homomorphism \begin{equation}\label{2.3.4} \psi^{\la 0\ra}_1=2\psi_1'+ d_3\circ\rho_3.\end{equation}
Observe that 
  $d_2\circ \psi_1^{\la 0\ra}=2d_2\circ \psi_1'= 2(d_1\t 1-1\t d_1)$,  
and 
\begin{align*}&\psi_4(\psi_1^{\la 0\ra}(x_1\t d_2(x_2))\cdot x_2)\\
{}={}&2\psi_4(\psi_1'(x_1\t d_2(x_2))\cdot x_2)+ 
\psi_4((d_3\circ\rho_3)(x_1\t d_2(x_2))\cdot x_2),&&\text{by (\ref{2.3.4}),}
\\
{}={}&
2\psi_4(\psi_1'(x_1\t d_2(x_2))\cdot x_2)
-\rho_1(x_1\t x_2\t x_2),&&\text{by (\ref{C.2}),}
\\
{}={}&\begin{cases}
\phantom{+}2\psi_4\Big(\psi_1'(x_1\t d_2(x_2))\cdot x_2\Big)\\
-2\psi_4\Big(\psi_1'(x_1\t d_2(x_2))\cdot x_2\Big)
-d_1(x_1)\cdot \psi_4(x_2\cdot x_2),
\end{cases}&&\text{by (\ref{C.1}),}
\\
{}={}&2d_1(x_1)\cdot \psi_4(x_2^{(2)}).
\end{align*} Thus, $\psi_1^{\la 0\ra}$ is $2$-compatible with the data of \ref{A.1}. \end{proof} 

The following fact was used in the proof of Observation~\ref{Y22.5}. It is established by a
 routine diagram chase. More general statements are also true.\begin{lemma}
\label{write me}Adopt the data of {\rm\ref{A.1}}. If $G$ is a free $P$-module, then the complex 
$$(G\t F_1\t F_1)^\vee\xrightarrow{(1\t d_2\t d_2)^\vee} (G\t F_2\t F_2)^\vee\xrightarrow{\bmatrix (1\t d_3\t 1)^\vee\\(1\t 1\t d_3)^\vee\endbmatrix} \begin{matrix} (G\t F_3\t F_2)^\vee\\\p\\
(G\t F_2\t F_3)^\vee\end{matrix} $$ is exact.
\end{lemma}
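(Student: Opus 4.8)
The inclusion of $\im(1\t d_2\t d_2)^\vee$ in the kernel of $\bigl((1\t d_3\t1)^\vee,(1\t1\t d_3)^\vee\bigr)$ is automatic, since composing $1\t d_2\t d_2$ with either $1\t d_3\t1$ or $1\t1\t d_3$ introduces the zero map $d_2d_3$. So the substance is the reverse inclusion, and the plan is to obtain it by a diagram chase in the first--quadrant double complex
$$C^{p,q}=\Hom_P(G\t F_p\t F_q,F_4),\qquad 0\le p,q\le 4,$$
whose horizontal differential $\partial_{\mathrm h}\colon C^{p,q}\to C^{p+1,q}$ is precomposition with $1\t d_{p+1}\t1$ and whose vertical differential $\partial_{\mathrm v}\colon C^{p,q}\to C^{p,q+1}$ is precomposition with $1\t1\t d_{q+1}$. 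Since $\partial_{\mathrm h}$ and $\partial_{\mathrm v}$ act on different tensor factors they commute, and of course $\partial_{\mathrm h}^2=\partial_{\mathrm v}^2=0$. Unwinding definitions, the map in the lemma out of $(G\t F_1\t F_1)^\vee=C^{1,1}$ is $\partial_{\mathrm v}\partial_{\mathrm h}=\partial_{\mathrm h}\partial_{\mathrm v}$, and the map out of $(G\t F_2\t F_2)^\vee=C^{2,2}$ is $(\partial_{\mathrm h},\partial_{\mathrm v})$.

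The engine of the chase is that every row $C^{\bullet,q}=\Hom_P(G\t F_q,F^\vee)$ and every column $C^{p,\bullet}=\Hom_P(G\t F_p,F^\vee)$ is exact except at its $F_4^\vee$--end. Indeed, $G\t F_q$ and $G\t F_p$ are free, so $\Hom_P(G\t F_q,-)$ and $\Hom_P(G\t F_p,-)$ are exact, and it suffices that $F^\vee$ be acyclic, i.e.\ exact at $F_0^\vee,F_1^\vee,F_2^\vee,F_3^\vee$; this is exactly the fact invoked in the proof of Remark~\ref{R1}. It holds because, under Data~\ref{A.1}, the perfect pairings of (\ref{perfectp}) together with the multiplication isomorphisms $F_0\xrightarrow{\ \cong\ }F_4^\vee$ and $F_4\xrightarrow{\ \cong\ }F_0^\vee$ assemble into a chain isomorphism $F\cong F^\vee$ exactly as in~\ref{four}, and $F$ is acyclic; in particular $d_1^\vee$ is injective, which is the exactness at the $F_0^\vee$--spot. (Injectivity of $d_1^\vee$ may alternatively be read off from $(0:_{F_4}\im d_1)=0$, which holds since $\im d_1$ has positive grade.)

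Now take $\phi\in C^{2,2}$ with $\partial_{\mathrm h}\phi=0$ and $\partial_{\mathrm v}\phi=0$; I must produce $\theta\in C^{1,1}$ with $\phi=\partial_{\mathrm v}\partial_{\mathrm h}\theta$, equivalently $\phi=\theta\circ(1\t d_2\t d_2)$. Exactness of $C^{2,\bullet}$ at $q=2$ gives $\eta\in C^{2,1}$ with $\phi=\partial_{\mathrm v}\eta$. Then $\partial_{\mathrm v}(\partial_{\mathrm h}\eta)=\partial_{\mathrm h}\phi=0$, so exactness of $C^{3,\bullet}$ at $q=1$ gives $\zeta\in C^{3,0}$ with $\partial_{\mathrm h}\eta=\partial_{\mathrm v}\zeta$. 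Next $\partial_{\mathrm v}(\partial_{\mathrm h}\zeta)=\partial_{\mathrm h}(\partial_{\mathrm v}\zeta)=\partial_{\mathrm h}\partial_{\mathrm h}\eta=0$, and $\partial_{\mathrm v}$ is injective on $C^{4,0}$, so $\partial_{\mathrm h}\zeta=0$; exactness of $C^{\bullet,0}$ at $p=3$ then gives $\xi\in C^{2,0}$ with $\zeta=\partial_{\mathrm h}\xi$. Set $\eta'=\eta-\partial_{\mathrm v}\xi$; then $\partial_{\mathrm v}\eta'=\phi$ and $\partial_{\mathrm h}\eta'=\partial_{\mathrm h}\eta-\partial_{\mathrm v}\zeta=0$. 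Finally, exactness of $C^{\bullet,1}$ at $p=2$ gives $\theta\in C^{1,1}$ with $\eta'=\partial_{\mathrm h}\theta$, whence $\phi=\partial_{\mathrm v}\eta'=\partial_{\mathrm v}\partial_{\mathrm h}\theta=\theta\circ(1\t d_2\t d_2)$, as needed.

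I do not anticipate a genuine obstacle. The one step deserving care is the input: verifying that the rows and columns of $C^{\bullet,\bullet}$ are exact in positions $0$ through $3$, since this is where Data~\ref{A.1} (the perfect pairings, and the positive grade of $\im d_1$) actually enters; once that is in hand the chase closes up automatically, the first--quadrant support of $C^{\bullet,\bullet}$ guaranteeing that it terminates rather than receding to infinity. The same argument proves exactness of each of the analogous two-step complexes $C^{i,i}\to C^{i+1,i+1}\to C^{i+2,i+1}\p C^{i+1,i+2}$ in the range where the needed rows and columns are exact, which is the more general statement alluded to just after the lemma.
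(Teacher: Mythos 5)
Your proof is correct and takes exactly the paper's approach: the paper's one-line proof simply asserts that each row and column of the double complex $(G\otimes F\otimes F)^\vee$ is acyclic and leaves the chase to the reader, and your argument supplies that chase in full, with the input (exactness of $F^\vee$ away from the $F_4^\vee$ end, hence of $\Hom_P(G\otimes F_q,F^\vee)$ and $\Hom_P(G\otimes F_p,F^\vee)$) justified correctly from Data~\ref{A.1}.
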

\begin{proof} Each row and column of the double complex $(G\t F\t F)^\vee$ is acyclic. 
\end{proof} 

Lemma~\ref{2.2} is the final piece of the base case. This result is a consequence of the prime avoidance lemma.
\begin{lemma}\label{2.2}
Let $P$ be a commutative Noetherian ring, $F_1$ be a free $R$-module, and $d_1:F_1\to P$ be a $P$-module homomorphism. If $\im d_1$ has positive grade, 
then there exists an element $g_1$ in $F_1$ with $Pg_1$ a summand of $F_1$ and $d_1(g_1)$ a regular element of $P$.\end{lemma}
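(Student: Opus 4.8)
The plan is to use prime avoidance applied to the set of minimal primes of $P$, but phrased at the level of the free module $F_1$. Write $F_1 = \bigoplus_{j=1}^n P e_j$ for a basis $e_1,\dots,e_n$, and set $a_j = d_1(e_j) \in P$, so that $\im d_1 = (a_1,\dots,a_n)$. Since $\im d_1$ has positive grade, it contains a nonzerodivisor, hence $\im d_1$ is not contained in any associated prime of $P$; in particular it is contained in no prime of $\Ass P$. The element $g_1$ we seek will be taken of the form $g_1 = \sum_j c_j e_j$ with $c_j \in P$, and the two requirements — that $P g_1$ be a free summand of $F_1$, and that $d_1(g_1)$ be a nonzerodivisor — must be arranged simultaneously.

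First I would handle the ``summand'' condition. A single element $g_1 \in F_1$ spans a free summand precisely when some coordinate of $g_1$ (in a suitable basis, or after applying an automorphism of $F_1$) is a unit; more robustly, $P g_1$ is a summand iff the ideal generated by the coordinates $c_1,\dots,c_n$ of $g_1$ is all of $P$, i.e. locally at every maximal ideal $\m$ one of the $c_j$ is a unit. Rather than chase this condition directly, I would first reduce to the case where one of the original $a_j$, say $a_n$, is already ``large'': replace the basis $e_1,\dots,e_n$ if necessary so that no extra unimodularity issue arises, or simply observe that it suffices to produce $g_1$ with $d_1(g_1)$ regular and with $g_1$ \emph{unimodular} in $F_1$ (a coordinate vector whose entries generate the unit ideal), and that unimodular vectors span free summands. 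The cleanest route: we will take $g_1 = e_1 + t(e_2 - e_1) + \cdots$ type combinations, but the coordinate ``$1$'' appearing in $e_1 + (\text{stuff in } e_2,\dots,e_n)$ automatically makes $g_1$ unimodular, so the summand condition is free provided $g_1$ has the form $e_1 + (\text{an element of } Pe_2 \oplus \cdots \oplus Pe_n)$.

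So the real content is: find scalars $c_2,\dots,c_n \in P$ such that $a_1 + c_2 a_2 + \cdots + c_n a_n$ is a nonzerodivisor on $P$. This is exactly the statement that a certain coset of the ideal $(a_2,\dots,a_n)$ meets the complement of $\bigcup_{\pp \in \Ass P}\pp$. Here I would invoke the prime avoidance lemma in its ``for ideals and cosets'' form: since $\Ass P$ is finite (Noetherian), and since for each $\pp \in \Ass P$ the ideal $(a_1,\dots,a_n) = \im d_1$ is not contained in $\pp$ (as $\im d_1$ contains a nonzerodivisor), one checks that $a_1 + (a_2,\dots,a_n) \not\subseteq \pp$ for each such $\pp$: if it were contained in $\pp$ then $a_1 \in \pp + (a_2,\dots,a_n) \subseteq \pp$ would force, together with $a_2,\dots,a_n \in \pp$, the containment $\im d_1 \subseteq \pp$, a contradiction. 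Prime avoidance for cosets then yields $c_2,\dots,c_n$ with $a_1 + \sum_{j\ge 2} c_j a_j \notin \bigcup_{\pp\in\Ass P}\pp$, i.e. this element is a nonzerodivisor. Setting $g_1 = e_1 + \sum_{j\ge 2} c_j e_j$ finishes the proof: $d_1(g_1) = a_1 + \sum_{j\ge 2} c_j a_j$ is regular, and $g_1$ is unimodular (its $e_1$-coordinate is $1$), so $P g_1$ is a free summand of $F_1$.

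The main obstacle is the bookkeeping in the coset version of prime avoidance: one must be careful that the element one avoids into is genuinely of the form ``fixed element plus ideal'' and that the ideal in question avoids each relevant prime. The key reduction that makes this clean is recognizing that ``$\im d_1$ has positive grade'' is equivalent to ``$\im d_1 \not\subseteq \pp$ for all $\pp \in \Ass P$'', which is where the Noetherian hypothesis is used (finiteness of $\Ass P$), and that unimodularity of $g_1$ — hence the summand property — can be guaranteed purely formally by forcing one coordinate of $g_1$ to equal $1$.
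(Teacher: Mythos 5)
Your proof is correct and takes essentially the same route as the paper: fix one coordinate of $g_1$ equal to $1$ so that unimodularity (hence the summand condition) is automatic, and then arrange that $d_1(g_1)$ lies in the coset $a_1 + (a_2,\dots,a_n)$ and misses every prime in $\Ass(P)$. The only difference is that you invoke a coset form of prime avoidance as a black box, while the paper proves exactly that coset-avoidance statement by hand --- first passing to the maximal (hence pairwise incomparable) elements $S$ of $\Ass(P)$, then partitioning $S$ according to whether $d_1(g)$ lies in the prime, and multiplying a suitable avoider by a product of elements drawn from the primes in the complementary part.
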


\begin{proof} Let
 $$S=\{\mfp\in \Ass (P)\mid \text{ $\mfp$ is not properly contained in $\mfq$ for any $\mfq\in \Ass(P)$}\}.$$
The point is that the set of zero divisors of $P$ is $\cup_{\mfp\in S} \mfp$ and no prime of $S$ contains another prime of $S$. The hypothesis that $\im d_1$ has positive grade ensures that \begin{equation}\label{26.6.1}\im d_1 \not \subseteq \bigcup_{\mfp\in S}\mfp.\end{equation} Let $F_1=Pg\p F_1'$ be a decomposition of $F_1$ into free submodules with $\rank Pg$ equal to $1$. We will identify an element $\theta$ of $F_1'$ such that
\begin{equation}d_1(g+\theta)\notin \bigcup_{\mfp\in S}\mfp.\label{26.6.2}\end{equation} Once $\theta$ has been identified, then we define  $g_1$ to be $g+\theta$ and we observe that $g_1$ generates a summand of $F_1$ and $d_1(g_1)$ is a regular element of $P$.

Decompose $S$ into two subsets:
$$S_0=\{\mfp\in S\mid d_1(g)\in \mfp\}\quad\text{and}\quad  S_1=\{\mfp\in S\mid d_1(g)\notin \mfp\}.$$ Hypothesis~{\rm\ref{26.6.1}} guarantees that $d_1(F_1')\not\subseteq \mfp_0$ for any $\mfp_0$ in $S_0$; so by the prime avoidance lemma there exists an element $\theta'\in F_1'$ with $d_1(\theta')\notin \mfp_0$ for any $\mfp_0$ in $S_0$.
In a similar manner, if $\mfp \in S_1$, then
$\mfp$ is not contained in the union of the ideals of $S_0$ and there exists an element $p_{\mfp}\in \mfp$; but $p_\mfp\notin \mfp_0$ for any $\mfp_0\in S_0$.
 Let
$$\theta=\Big(\prod_{\mfp\in S_1}p_{\mfp}\Big)\theta'.$$ Observe that $d_1(g+\theta)$ is not in any element of $S$. Indeed, if $\mfp\in S_0$, then $d_1(g)\in \mfp$ but $d_1(\theta)\notin \mfp$; but if $\mfp\in S_1$, then $d_1(g)\notin \mfp$ but $d_1(\theta)\in \mfp$. Assertion~(\ref{26.6.2}) has been established; the proof is complete.
\end{proof}

\section{Properties of the data of Lemma~\ref{2^n}.}\label{Properties of the data.}

Lemma~\ref{2^n} 
 is the most complicated part of the proof of the main Theorem. 
The proof of Lemma~\ref{2^n} is carried out in Section~\ref{Inductive step.}.
We 
start with data $(\psi_1,N)$, as described in \ref{B.1}, with $\psi_1$ not an alternating map, and we 
 modify the 
 data in order to 
 produce 
data  $(\psi_1',N')$ with the property that $\psi_1'$ is closer to being an alternating map than $\psi_1$ is.  
The most important step in the 
 modification process is
Claim~\ref{29.1.14}.
 The ultimate goal of the present section  is Corollary~\ref{X.27}, which 
 provides the proof of Claim~\ref{29.1.14}. In order to prove Corollary~\ref{X.27} we
 collect properties of the initial data $(\psi_1,N)$.

  A primitive version of Corollary~\ref{X.27} may be found in \cite[(12)]{KM80}. The present version is an improvement of  the primitive version because the present version does not require that $F_1$ has a generating set $\{e_i\}$ with the property that $d_1(e_i)$ is a regular element of $P$ for each $i$. Also, 
 the present version 
makes use of a homomorphism 
$$r:D_2F_1\to F_4$$ (see Lemma~\ref{X.16}); whereas, the corresponding object in \cite{KM80},  is  a sequence of elements $r_1,\dots, r_k$ of $F_4$. No interesting homomorphism can be constructed from these elements.

\begin{remark}\label{7.0} If the data $(\psi_1,N)$ satisfies (\ref{B.4}), then the composition
$$D_2F_1\xrightarrow{\comult}T_2F_1\xrightarrow{\psi_1}F_2$$ is in $\ker\Big(d_{2*}:\Hom(D_2F_1,F_2)\to \Hom(D_2F_1,F_1)\Big)$.
The complex $\Hom(D_2F_1,F)$ is acyclic; consequently there exists a homomorphism $\chi\in \Hom(D_2F_1,F_3)$ with 
\begin{equation}\label{B.1.5}
(d_3\circ \chi)(x_1^{(2)})=\psi_1(x_1\t x_1),
\end{equation} for $x_1\in F_1$. Furthermore, if $F_1=F_1'\p F_1''$ and $\psi_1(x_1\t x_1)=0$ for all $x_1\in F_1'$, then one can choose $\chi$ to satisfy $\chi|_{D_2F_1'}=0$ because $D_2F_1=D_2F_1'\p (F_1'\t F_1'')\p D_2F_1''$.
\end{remark}

\begin{data}
\label{data4} Adopt the data of \ref{B.1}, together with a homomorphism $\chi:
D_2F_1\to F_3$ which satisfies {\rm(\ref{B.1.5})}.\end{data}
\begin{lemma}
\label{!real-24.6} Adopt the data of {\rm\ref{data4}}. Then there exists a homomorphism 
$$\beta: D_2F_1\t F_1\to F_4$$ with
$$d_1(w_1)\cdot\beta(x_1^{(2)}\t z_1)=\begin{cases} 
\phantom{+}\psi_4\Big(\psi_1(x_1\t z_1)\cdot\psi_1(x_1\t w_1)\Big)\\
+Nd_1(z_1)\cdot \psi_3\Big(w_1\t \chi(x_1^{(2)})\Big),\end{cases}$$
for $x_1,z_1,w_1$ in $F_1$.
\end{lemma}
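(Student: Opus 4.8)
The plan is to construct $\beta$ by the same device used repeatedly in this paper: produce a homomorphism $\phi:D_2F_1\t F_1\to F_4$ which vanishes on $D_2F_1\t d_2(F_2)$, so that Remark~\ref{R1} (applied with $Y=D_2F_1$) yields a $\widetilde\phi=\beta$ with $d_1(w_1)\cdot\beta(x_1^{(2)}\t z_1)=\phi(x_1^{(2)}\t z_1\t w_1)$. So first I would \emph{guess} what $\phi$ should be. The target formula has $z_1$ and $w_1$ playing asymmetric roles, and the natural candidate is
$$\phi(x_1^{(2)}\t z_1\t w_1)=\psi_4\Big(\psi_1(x_1\t z_1)\cdot\psi_1(x_1\t w_1)\Big)+Nd_1(z_1)\cdot\psi_3\Big(w_1\t\chi(x_1^{(2)})\Big).$$
(One should also check this is well defined as a function of $x_1^{(2)}$, i.e. that the right-hand side is genuinely quadratic in $x_1$ and the bilinear polarization is consistent; this is routine since $\psi_1(x_1\t -)$ is linear in $x_1$ and $\chi$ is defined on $D_2F_1$.) The whole content is then to verify $\phi(x_1^{(2)}\t d_2(x_2)\t w_1)=0$ for all $x_1,w_1\in F_1$ and $x_2\in F_2$; once that is done Remark~\ref{R1} finishes the lemma immediately.

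So the main obstacle — and essentially the only real work — is the identity
$$\psi_4\Big(\psi_1(x_1\t d_2(x_2))\cdot\psi_1(x_1\t w_1)\Big)+Nd_1(d_2(x_2))\cdot\psi_3\Big(w_1\t\chi(x_1^{(2)})\Big)=0,$$
where of course $d_1\circ d_2=0$ kills the second term outright, so what actually must be shown is
$$\psi_4\Big(\psi_1(x_1\t d_2(x_2))\cdot\psi_1(x_1\t w_1)\Big)=0.$$
Hmm — that cannot be right as stated, because nothing forces that product to vanish; the cancellation must come from combining \emph{both} displayed summands before specializing. Let me restate the obstacle correctly: I must expand $\phi(x_1^{(2)}\t d_2(x_2)\t w_1)$ keeping both terms and use the structural equations. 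The handle on the first term is (\ref{B.5}) (the $N$-compatibility relation $\psi_4(\psi_1(x_1\t d_2(x_2))\cdot x_2)=Nd_1(x_1)\cdot\psi_4(x_2^{(2)})$), together with graded-commutativity of $\psi_4$ and the identity $\psi_1(x_1\t w_1)=\psi_1(x_1\t x_1)+(\text{alternating part})$; and the handle on the second term is (\ref{B.1.5}), namely $d_3(\chi(x_1^{(2)}))=\psi_1(x_1\t x_1)$, fed through (\ref{A.3}) (which converts $\psi_3(d_2(-)\t -)$ into $\psi_4(-\cdot d_3(-))$). The correct route is: rewrite $\psi_3(w_1\t\chi(x_1^{(2)}))$ using the perfect pairing property, apply $d$ to $\chi(x_1^{(2)})$ via (\ref{B.1.5}), and push everything through (\ref{A.3}) and (\ref{B.4}); the $N$'s produced by (\ref{B.4})/(\ref{B.5}) should exactly match the explicit $N$ in the second summand, producing the cancellation. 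I expect this to be a three-or-four-line $\psi_4$/$\psi_3$ computation of the type carried out in Lemmas~\ref{5.2} and \ref{three} and in the proof of Observation~\ref{Y22.5}.

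In summary: (1) write down the candidate $\phi$, (2) check it is well defined on $D_2F_1\t F_1$ (quick polarization check), (3) prove $\phi$ annihilates $D_2F_1\t d_2(F_2)\t F_1$ using (\ref{A.2}), (\ref{A.3}), (\ref{B.4}), (\ref{B.5}), (\ref{B.1.5}) and the fact that $\psi_1$ restricted via $\comult$ lands where $\chi$ controls it, (4) invoke Remark~\ref{R1} with $Y=D_2F_1$ to get $\beta$ with the asserted property. Step (3) is the crux; steps (1), (2), (4) are bookkeeping.
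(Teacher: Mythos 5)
Your candidate map $\phi(x_1^{(2)}\t z_1\t w_1)=\psi_4\big(\psi_1(x_1\t z_1)\cdot\psi_1(x_1\t w_1)\big)+Nd_1(z_1)\cdot\psi_3\big(w_1\t\chi(x_1^{(2)})\big)$ is exactly the map $B$ used in the paper, and the strategy of applying Remark~\ref{R1} with $Y=D_2F_1\t F_1$ is the right one. But you specialize the wrong slot. The two $F_1$-arguments $z_1$ and $w_1$ play genuinely asymmetric roles in $\phi$ (only $z_1$ sits inside $d_1(\cdot)$), and the conclusion you want has $d_1(w_1)$ on the left; so Remark~\ref{R1} must be applied with the \emph{last} factor $w_1$ playing the role of the $F_1$ in that remark, which means what you must verify is $\phi(x_1^{(2)}\t z_1\t d_2(w_2))=0$, not $\phi(x_1^{(2)}\t d_2(x_2)\t w_1)=0$. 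With your choice, $d_1(d_2(x_2))=0$ really does annihilate the second summand and the residual term $\psi_4\big(\psi_1(x_1\t d_2(x_2))\cdot\psi_1(x_1\t w_1)\big)$ is simply not zero in general; you noticed this, but the ``restatement'' you propose (``keep both terms before specializing'') does not change anything, because after substituting $d_2(x_2)$ in the $z_1$ slot the second term is identically zero and no amount of recombination can resurrect it. So your step (3), which you rightly call the crux, is not repaired by the discussion that follows.

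With the correct substitution $w_1\mapsto d_2(w_2)$, both summands survive and the calculation is short: polarize (\ref{B.5}) to move the outer $d_2(w_2)$ out, obtaining $-\psi_4\big(w_2\cdot\psi_1(x_1\t(d_2\circ\psi_1)(x_1\t z_1))\big)+Nd_1(x_1)\cdot\psi_4\big(w_2\cdot\psi_1(x_1\t z_1)\big)$ from the first summand; apply (\ref{A.3}) to the second summand to get $-Nd_1(z_1)\cdot\psi_4\big(w_2\cdot(d_3\circ\chi)(x_1^{(2)})\big)$; then expand $(d_2\circ\psi_1)(x_1\t z_1)$ by (\ref{B.4}) and replace $(d_3\circ\chi)(x_1^{(2)})$ by $\psi_1(x_1\t x_1)$ via (\ref{B.1.5}). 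The four resulting terms cancel in pairs. That is the paper's proof; your plan is structurally right but the mis-specialization is a genuine gap that the subsequent hand-waving does not close.
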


\begin{proof} Let $B:D_2F_1\t T_2(F_1)\to F_4$ be the homomorphism
$$B(x_1^{(2)}\t z_1\t w_1)=\begin{cases} 
\phantom{+}\psi_4\Big(\psi_1(x_1\t z_1)\cdot\psi_1(x_1\t w_1)\Big)\\
+Nd_1(z_1)\cdot \psi_3\Big(w_1\t \chi(x_1^{(2)})\Big).\end{cases}$$
According to Remark~\ref{R1} it suffices to prove that
$$B(x_1^{(2)}\t z_1\t d_2(w_2))=0,$$ 
for all $x_1,z_1\in F_1$ and $w_2\in F_2$. 
 Observe that
\begingroup\allowdisplaybreaks
\begin{align*}
&B(x_1^{(2)}\t z_1\t d_2(w_2))\\
{}={}&\begin{cases} 
\phantom{+}\psi_4\Big(\psi_1(x_1\t z_1)\cdot\psi_1(x_1\t d_2(w_2))\Big)\\
+Nd_1(z_1)\cdot \psi_3\Big(d_2(w_2)\t \chi(x_1^{(2)})\Big)\end{cases}\\
{}={}&\begin{cases} 
-\psi_4\Big(w_2\cdot\psi_1\big(x_1\t (d_2\circ\psi_1)(x_1\t z_1)\big)\Big)\\
+Nd_1(x_1)\cdot \psi_4\Big(w_2\cdot \psi_1(x_1\t z_1)\Big)\\
-Nd_1(z_1)\cdot \psi_4\Big(w_2\cdot (d_3\circ\chi)(x_1^{(2)})\Big),
\end{cases}&&\text{by (\ref{B.5}) and (\ref{A.3}),}\\
{}={}&\begin{cases} 
-Nd_1(x_1)\cdot \psi_4\Big(w_2\cdot\psi_1\big(x_1\t z_1 \big)\Big)\\
+Nd_1(z_1)\cdot\psi_4\Big(w_2\cdot\psi_1\big(x_1\t  x_1 \big)\Big)\\
+Nd_1(x_1)\cdot \psi_4\Big(w_2\cdot \psi_1(x_1\t z_1)\Big)\\
-Nd_1(z_1)\cdot \psi_4\Big(w_2\cdot \psi_1(x_1\t x_1)\Big),\end{cases}&&\text{by (\ref{B.4}) and (\ref{B.1.5})},\end{align*}\endgroup and this is zero.
\end{proof}

\begin{remark}\label{j11.1}Adopt the data of {\rm\ref{data4}}.  We often show that an element $x_4$ of $F_4$ is zero by showing that $d_1(w_1)\cdot x_4=0$ for all $w_1\in F_1$.
\begin{proof} The ideal $\im d_1$ of $P$ has positive grade; hence,  
the only element of $F_4$  annihilated by $\im d_1$ is the element zero.
(Keep in mind that $F_4$ is isomorphic to $P$.) \end{proof}\end{remark}

\begin{lemma}\label{X.16} Adopt the data of {\rm\ref{data4}} and the  homomorphism 
$\beta$ of Lemma~{\rm\ref{!real-24.6}}. Then there exists a homomorphism $r:D_2F_1\to F_4$ such that 
$$d_1(z_1)r(\theta_2)=\beta(\theta_2\t z_1)+N\psi_3(z_1\t \chi(\theta_2)),$$for $z_1\in F_1$ and $\theta_2\in D_2F_1$.
\end{lemma}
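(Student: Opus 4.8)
The plan is to imitate the construction pattern already used several times in this section: one builds a homomorphism out of $F_4$ by first verifying that a certain auxiliary map factors through a boundary, and then peeling off one copy of $d_1$. Here the target object $r(\theta_2)$ is characterized by how it pairs against $d_1(z_1)$, so I would introduce the homomorphism
$$C:D_2F_1\t F_1\to F_4,\qquad C(\theta_2\t z_1)=\beta(\theta_2\t z_1)+N\psi_3(z_1\t \chi(\theta_2)),$$
and then appeal to Remark~\ref{R1} (with $Y=D_2F_1$) to produce $r:D_2F_1\to F_4$ with $d_1(z_1)\cdot r(\theta_2)=C(\theta_2\t z_1)$. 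By Remark~\ref{R1} this is legitimate provided $C(\theta_2\t d_2(z_2))=0$ for all $\theta_2\in D_2F_1$ and $z_2\in F_2$; that vanishing is the whole content of the lemma.

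So the core of the proof is the computation of $C(x_1^{(2)}\t d_2(z_2))$. First I would expand $\beta(x_1^{(2)}\t d_2(z_2))$ using the defining property of $\beta$ from Lemma~\ref{!real-24.6}, but note that Lemma~\ref{!real-24.6} only describes $d_1(w_1)\cdot\beta(x_1^{(2)}\t z_1)$, not $\beta$ itself. This suggests the cleaner route: rather than evaluating $C$ directly, I would show $d_1(w_1)\cdot C(x_1^{(2)}\t d_2(z_2))=0$ for all $w_1\in F_1$ and then invoke Remark~\ref{j11.1} (the ideal $\im d_1$ has positive grade, so it has no nonzero annihilator in $F_4\cong P$) to conclude $C(x_1^{(2)}\t d_2(z_2))=0$. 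Thus I compute
$$d_1(w_1)\cdot C(x_1^{(2)}\t d_2(z_2))=d_1(w_1)\cdot\beta(x_1^{(2)}\t d_2(z_2))+N\,d_1(w_1)\cdot\psi_3(d_2(z_2)\t\chi(x_1^{(2)})).$$
For the first term, apply the formula of Lemma~\ref{!real-24.6} with $z_1$ replaced by $d_2(z_2)$: it becomes $\psi_4(\psi_1(x_1\t d_2(z_2))\cdot\psi_1(x_1\t w_1))+N\,d_1(d_2(z_2))\cdot\psi_3(w_1\t\chi(x_1^{(2)}))$, and $d_1\circ d_2=0$ kills the second summand. For the remaining piece $\psi_4(\psi_1(x_1\t d_2(z_2))\cdot\psi_1(x_1\t w_1))$ I would use the $N$-compatibility relation (\ref{B.5}) in the form $\psi_4(\psi_1(x_1\t d_2(z_2))\cdot z_2)=N\,d_1(x_1)\cdot\psi_4(z_2^{(2)})$ — but here the second factor is $\psi_1(x_1\t w_1)\in F_2$ rather than $z_2$, so one must be more careful; the right tool is probably to massage $\psi_4(\psi_1(x_1\t d_2(z_2))\cdot\psi_1(x_1\t w_1))$ via (\ref{A.3}) after writing one of the $\psi_1$-factors through its differential, exactly as in the proof of Lemma~\ref{!real-24.6}. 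For the second term, $d_1(w_1)\cdot\psi_3(d_2(z_2)\t\chi(x_1^{(2)}))=\psi_3(d_2(z_2)\t d_1(w_1)\chi(x_1^{(2)}))$, and this can be rewritten using (\ref{A.3}) as $-\psi_4(z_2\cdot d_3(d_1(w_1)\chi(x_1^{(2)})))=-d_1(w_1)\cdot\psi_4(z_2\cdot(d_3\circ\chi)(x_1^{(2)}))=-d_1(w_1)\cdot\psi_4(z_2\cdot\psi_1(x_1\t x_1))$ by (\ref{B.1.5}). The two contributions should then cancel after using (\ref{B.4}) and (\ref{B.5}) to relate $\psi_4(z_2\cdot\psi_1(x_1\t x_1))$ back to $\psi_4(\psi_1(x_1\t d_2(z_2))\cdot\psi_1(x_1\t w_1))$ — this is precisely the same bookkeeping that appears at the end of the proof of Lemma~\ref{!real-24.6}.

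The main obstacle I anticipate is purely that last cancellation: keeping the arguments of $\psi_4$ and $\psi_1$ straight, matching the $D_2$-versus-$T_2$ conventions (so that $x_1\cdot x_1=2x_1^{(2)}$ does not sneak in an erroneous factor of $2$), and correctly tracking the sign coming from (\ref{A.3}). I do not expect any conceptual difficulty — the acyclicity inputs are all packaged into Remark~\ref{R1} and Remark~\ref{j11.1}, and every algebraic identity needed is among (\ref{A.2}), (\ref{A.3}), (\ref{B.4}), (\ref{B.5}), (\ref{B.1.5}), and Lemma~\ref{!real-24.6} — but the computation is the kind where a dropped sign invalidates the whole thing, so I would write it out as a displayed chain of equalities with each step annotated by the identity used, in the same style as the other proofs in this section.
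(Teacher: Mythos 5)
Your proposal is correct and follows the paper's argument essentially step for step: the same auxiliary map ($\xi$ in the paper, your $C$), the same appeal to Remark~\ref{R1} with $Y=D_2F_1$, the same reduction to checking vanishing after multiplying by $d_1(w_1)$ via Remark~\ref{j11.1}, the same expansion of $\beta(x_1^{(2)}\t d_2(z_2))$ via Lemma~\ref{!real-24.6} (where $d_1\circ d_2=0$ drops the second summand), and the same rewriting of $\psi_3(d_2(z_2)\t\chi(x_1^{(2)}))$ via (\ref{A.3}) and (\ref{B.1.5}). The one slight imprecision is in the middle, where you suggest (\ref{A.3}) might be needed to massage $\psi_4(\psi_1(x_1\t d_2(z_2))\cdot\psi_1(x_1\t w_1))$; in fact the paper needs only the polarized form of (\ref{B.5}) (taking $y_2=\psi_1(x_1\t w_1)$ as the second factor) followed by (\ref{B.4}), which is exactly what you anticipate in your final sentence, and the two $Nd_1(x_1)$-terms then cancel, leaving precisely $Nd_1(w_1)\psi_4(\psi_1(x_1\t x_1)\cdot z_2)=-Nd_1(w_1)\psi_3(d_2(z_2)\t\chi(x_1^{(2)}))$.
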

\begin{proof}
Let $\xi: D_2F_1\t F_1\to F_4$ be the map
$$\xi(\theta_2\t z)=\beta(\theta_2\t z_1)+N\psi_3(z_1\t \chi(\theta_2)).$$
According to Remark~\ref{R1} it suffices to show  
 that the element $\xi(\theta_2\t d_2(w_2))$ of $F_4$  is zero for all $\theta_2\in D_2F_1$ and $w_2\in F_2$.
In light of Remark~\ref{j11.1}, we
prove that
\begin{equation}\label{!X.3}d_1(w_1)\cdot \xi(x_1^{(2)}\t d_2(w_2))=0,\end{equation}  for all $x_1,w_1\in F_1$ and $w_2\in F_2$. Of course, 
\begin{equation}\notag\xi(x_1^{(2)}\t d_2(w_2))=\beta(x_1^{(2)}\t d_2(w_2))+N\psi_3(d_2(w_2)\t \chi(x_1^{(2)}))\end{equation} and one applies (\ref{A.3}) and (\ref{B.1.5}) to see that
\begin{align}\psi_3(d_2(w_2)\t \chi(x_1^{(2)})){}={}&{}-\psi_4(w_2\cdot (d_3 \circ\chi)(x_1^{(2)}))\label{X.16.2}\\
{}={}&{}-\psi_4(w_2\cdot \psi_1(x_1\t x_1)).\notag\end{align}
On the other hand, 
\begin{align*}&d_1(w_1)\beta(x_1^{(2)}\t d_2(w_2))\\{}={}&\psi_4\Big(\psi_1\big(x_1\t d_2(w_2)\big)\cdot \psi_1(x_1\t w_1)\Big),&&\text{by Lemma~\ref{!real-24.6}},\\
{}={}&\begin{cases} 
-\psi_4\Big(\psi_1\big(x_1\t (d_2\circ\psi_1)(x_1\t w_1)\big)\cdot w_2\Big)\\
+N d_1(x_1)\cdot \psi_4\Big( \psi(x_1\t w_1)\cdot  w_2\Big),\end{cases}&&\text{by (\ref{B.5}),}\\
{}={}&\begin{cases} 
-N d_1(x_1)\cdot\psi_4\Big(\psi_1(x_1\t  w_1))\cdot w_2\Big)\\
+N d_1(w_1)\cdot\psi_4\Big(\psi_1(x_1\t x_1)\cdot w_2\Big)\\
+N d_1(x_1)\cdot\psi_4(\psi(x_1\t w_1)\cdot w_2),\end{cases}&&\text{by (\ref{B.4}),}\\
{}={}&{}-Nd_1(w_1)\cdot \psi_3(d_2(w_2)\t \chi(x_1^{(2)})),&&\text{by (\ref{X.16.2})}.
\end{align*} 
Thus, $$d_1(w_1)\Big[\beta(x_1^{(2)}\t d_2(w_2))+N \psi_3(d_2(w_2)\t \chi(x_1^{(2)}))\Big]=0;$$
(\ref{!X.3}) is established; and the proof is complete.
\end{proof}

\begin{corollary}\label{X.3}Adopt the data of {\rm\ref{data4}} and the  homomorphism
 $r$ of Lemma~{\rm\ref{X.16}}. If $x_1,z_1,w_1$ are in $F_1$, then
$$d_1(w_1)\cdot d_1(z_1)\cdot r(x_1^{(2)})=\begin{cases}
\phantom{+}\psi_4\Big(\psi_1(x_1\t z_1)\cdot \psi_1(x_1\t w_1)\Big)\\
+Nd_1(z_1)\cdot \psi_3\Big(w_1\t \chi(x_1^{(2)})\Big)\\
+Nd_1(w_1)\cdot \psi_3\Big(z_1\t \chi(x_1^{(2)})\Big)
\end{cases}$$
\end{corollary}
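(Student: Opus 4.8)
The plan is to derive the identity directly by combining Lemma~\ref{X.16} with Lemma~\ref{!real-24.6}; no new diagram chase is needed.

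First I would specialize the defining relation for $r$ from Lemma~\ref{X.16} to the element $\theta_2=x_1^{(2)}$, obtaining
$$d_1(z_1)\,r(x_1^{(2)})=\beta(x_1^{(2)}\t z_1)+N\psi_3(z_1\t\chi(x_1^{(2)})),$$
and then multiply both sides by the scalar $d_1(w_1)\in P$. Since $r$, $\beta$, $\psi_3$, and $\chi$ are $P$-linear, this yields
$$d_1(w_1)\cdot d_1(z_1)\cdot r(x_1^{(2)})=d_1(w_1)\cdot\beta(x_1^{(2)}\t z_1)+N\,d_1(w_1)\cdot\psi_3(z_1\t\chi(x_1^{(2)})).$$

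Next I would replace $d_1(w_1)\cdot\beta(x_1^{(2)}\t z_1)$ by the value supplied by Lemma~\ref{!real-24.6}, namely $\psi_4(\psi_1(x_1\t z_1)\cdot\psi_1(x_1\t w_1))+N\,d_1(z_1)\cdot\psi_3(w_1\t\chi(x_1^{(2)}))$. Substituting this into the previous display produces exactly the three-term right-hand side in the statement of the corollary, which completes the argument.

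There is no serious obstacle: the corollary is a purely formal consequence of the two preceding lemmas, obtained by multiplying one identity by a ring element and inserting the other. The only things worth a remark are that $d_1(w_1)$ and $d_1(z_1)$ lie in $P$, so they commute past every homomorphism appearing, and that --- as a consistency check rather than a step of the proof --- the resulting right-hand side is visibly symmetric under interchanging $z_1$ and $w_1$, since $\psi_1(x_1\t z_1)\cdot\psi_1(x_1\t w_1)$ is a product in $D_2F_2$ and hence unchanged by that swap.
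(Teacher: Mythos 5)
Your proposal is correct and is exactly the paper's argument: apply Lemma~\ref{X.16} with $\theta_2=x_1^{(2)}$, multiply through by $d_1(w_1)$, and substitute the expression for $d_1(w_1)\cdot\beta(x_1^{(2)}\t z_1)$ from Lemma~\ref{!real-24.6}. The paper simply states the two identities and leaves the substitution to the reader, while you spell it out; there is no substantive difference.
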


\begin{proof}Recall from Lemma~\ref{!real-24.6} that 
$$d_1(w_1)\cdot\beta(x_1^{(2)}\t z_1)=\begin{cases} 
\phantom{+}\psi_4\Big(\psi_1(x_1\t z_1)\cdot\psi_1(x_1\t w_1)\Big)\\
+Nd_1(z_1)\cdot \psi_3\Big(w_1\t \chi(x_1^{(2)})\Big).\end{cases}$$ and from Lemma~\ref{X.16} that 
$$d_1(z_1)r(x_1^{(2)})=\beta(x_1^{(2)}\t z_1)+N\psi_3(z_1\t \chi(x_1^{(2)})).$$
\vskip-24pt\end{proof}

\begin{lemma}\label{X.19} Adopt the data of {\rm\ref{data4}} and the  homomorphism 
$\beta$  of Lemmas~{\rm\ref{!real-24.6}}.  If $x_1$ is in $F_1$, then 
$$\beta(x_1^{(2)}\t x_1)=N\psi_3\big(x_1\t \chi(x_1^{(2)})\big).$$\end{lemma}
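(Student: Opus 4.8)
The plan is to reduce the identity to a ``multiply by $\im d_1$'' statement and then run the by-now familiar computation. For fixed $x_1$, the element $\beta(x_1^{(2)}\t x_1)-N\psi_3\big(x_1\t\chi(x_1^{(2)})\big)$ is a fixed element of $F_4$, so by Remark~\ref{j11.1} it suffices to show that $d_1(w_1)$ annihilates it for every $w_1\in F_1$.

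First I would invoke Lemma~\ref{!real-24.6} with $z_1=x_1$, which gives
$$d_1(w_1)\cdot\beta(x_1^{(2)}\t x_1)=\psi_4\Big(\psi_1(x_1\t x_1)\cdot\psi_1(x_1\t w_1)\Big)+Nd_1(x_1)\cdot\psi_3\Big(w_1\t\chi(x_1^{(2)})\Big).$$
Next, using $\psi_1(x_1\t x_1)=(d_3\circ\chi)(x_1^{(2)})$ from (\ref{B.1.5}) together with the commutativity of the product in $D_2F_2$, rewrite the first summand as $\psi_4\big(\psi_1(x_1\t w_1)\cdot d_3(\chi(x_1^{(2)}))\big)$ and apply (\ref{A.3}) to convert it into $-\psi_3\big((d_2\circ\psi_1)(x_1\t w_1)\t\chi(x_1^{(2)})\big)$. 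Finally, expand $(d_2\circ\psi_1)(x_1\t w_1)=N\big(d_1(x_1)\cdot w_1-d_1(w_1)\cdot x_1\big)$ via (\ref{B.4}); the term $-Nd_1(x_1)\psi_3\big(w_1\t\chi(x_1^{(2)})\big)$ produced this way cancels the leftover $Nd_1(x_1)\psi_3\big(w_1\t\chi(x_1^{(2)})\big)$, leaving
$$d_1(w_1)\cdot\beta(x_1^{(2)}\t x_1)=Nd_1(w_1)\cdot\psi_3\big(x_1\t\chi(x_1^{(2)})\big),$$
which is exactly the required cancellation. Then Remark~\ref{j11.1} finishes the proof.

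There is no genuine obstacle here; the only points requiring attention are the choice $z_1=x_1$ in Lemma~\ref{!real-24.6} (so that $\psi_1(x_1\t x_1)$ appears and can be replaced by a boundary), and keeping track of the two places where $Nd_1(x_1)\psi_3\big(w_1\t\chi(x_1^{(2)})\big)$ occurs so that they cancel.
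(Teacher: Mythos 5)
Your proposal is correct and follows the paper's own proof essentially verbatim: reduce via Remark~\ref{j11.1}, specialize Lemma~\ref{!real-24.6} at $z_1=x_1$, substitute $\psi_1(x_1\t x_1)=(d_3\circ\chi)(x_1^{(2)})$ from (\ref{B.1.5}), apply (\ref{A.3}) and then (\ref{B.4}), and observe the cancellation. There is nothing to add.
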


\begin{proof} In light of Remark~\ref{j11.1}, it suffices to prove that
$$d_1(w_1)\cdot \beta(x_1^{(2)}\t x_1)=Nd_1(w_1)\cdot \psi_3(x_1\t \chi(x_1^{(2)})),$$ for all $x_1, w_1$ in $F_1$.
Observe that
\begingroup\allowdisplaybreaks
\begin{align*}
&d_1(w_1)\cdot\beta(x_1^{(2)}\t x_1)\\
{}={}&\begin{cases} 
\phantom{+}\psi_4\Big(\psi_1(x_1\t x_1)\cdot\psi_1(x_1\t w_1)\Big)\\
+Nd_1(x_1)\cdot \psi_3\Big(w_1\t \chi(x_1^{(2)})\Big),\end{cases}&&\text{by Lemma~\ref{!real-24.6}},\\
{}={}&\begin{cases} 
\phantom{+}\psi_4\Big(d_3\chi(x_1^{(2)})\cdot\psi_1(x_1\t w_1)\Big)\\
+Nd_1(x_1)\cdot \psi_3\Big(w_1\t \chi(x_1^{(2)})\Big),\end{cases}&&\text{by (\ref{B.1.5}),}\\
{}={}&\begin{cases} 
-\psi_3\Big((d_2\circ \psi_1)(x_1\t w_1)\t \chi(x_1^{(2)})\Big)\\
+Nd_1(x_1)\cdot \psi_3\Big(w_1\t \chi(x_1^{(2)})\Big),\end{cases}
&&\text{by (\ref{A.3})}\\
{}={}&\begin{cases} 
-Nd_1(x_1)\cdot\psi_3\Big(w_1\t \chi(x_1^{(2)})\Big)\\
+Nd_1(w_1)\cdot\psi_3\Big(x_1\t \chi(x_1^{(2)})\Big)\\
+Nd_1(x_1)\cdot \psi_3\Big(w_1\t \chi(x_1^{(2)})\Big),\end{cases}&&\text{by (\ref{B.4}),}\\
{}={}& 
Nd_1(w_1)\cdot \psi_3\Big(x_1\t \chi(x_1^{(2)})\Big).
\end{align*}\endgroup
\vskip-24pt\end{proof}

\begin{corollary}\label{X.20} Adopt the data of {\rm\ref{data4}} and the  homomorphism $r$ of Lemmas~{\rm\ref{X.16}}. 
 If $x_1\in F_1$, then 
$$d_1(x_1)r(x_1^{(2)})=2N\psi_3\big(x_1\t \chi(x_1^{(2)})\big).$$\
\end{corollary}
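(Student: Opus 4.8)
The plan is to obtain this identity as an immediate specialization of the two preceding lemmas, so the proof will be only two lines. First I would apply Lemma~\ref{X.16} with $\theta_2 = x_1^{(2)}$ and $z_1 = x_1$, which gives
$$d_1(x_1)\, r(x_1^{(2)}) = \beta\big(x_1^{(2)}\t x_1\big) + N\psi_3\big(x_1\t \chi(x_1^{(2)})\big).$$
Then I would substitute the value of $\beta(x_1^{(2)}\t x_1)$ supplied by Lemma~\ref{X.19}, namely $\beta(x_1^{(2)}\t x_1) = N\psi_3\big(x_1\t \chi(x_1^{(2)})\big)$. Combining the two equal terms on the right-hand side then yields $d_1(x_1)\, r(x_1^{(2)}) = 2N\psi_3\big(x_1\t \chi(x_1^{(2)})\big)$, which is the assertion.

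I do not expect any real obstacle here. Lemma~\ref{X.16} produces a genuine equation in $F_4$ (rather than one valid only after multiplying through by an element of $\im d_1$), so at this stage no further cancellation argument or appeal to Remark~\ref{j11.1} is needed; the entire proof consists of the two substitutions just described. The only point worth a moment's care is to recognize that the $\psi_3(x_1\t\chi(x_1^{(2)}))$ term coming out of Lemma~\ref{X.16} and the one coming out of Lemma~\ref{X.19} are literally the same expression, so that they add to $2N\psi_3(x_1\t\chi(x_1^{(2)}))$ and nothing is lost or doubled incorrectly.
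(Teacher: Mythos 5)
Your proof is correct and is essentially the paper's own: substitute $\theta_2 = x_1^{(2)}$, $z_1 = x_1$ into Lemma~\ref{X.16}, then replace $\beta(x_1^{(2)}\t x_1)$ using Lemma~\ref{X.19}.
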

\begin{proof}Observe that 
\begin{align*}d_1(x_1)r(x_1^{(2)}){}={}&
\beta(x_1^{(2)}\t x_1)+N\psi_3(x_1\t \chi(x_1^{(2)})),&&\text
{by Lemma~\ref{X.16},}\\{}={}&2N\psi_3(x_1\t \chi(x_1^{(2)})),&&\text{by Lemma~\ref{X.19}.}\end{align*}
\end{proof}

\begin{lemma}\label{X.75} Adopt the data of {\rm\ref{data4}} and the  homomorphism 
$\beta$  of Lemma~{\rm\ref{!real-24.6}}. If $x_1$ and $y_1$ are in $F_1$, then 
$$\beta(y_1x_1\t x_1)+\beta(x_1^{(2)}\t y_1)=N\psi_3\Big(x_1\t\chi(x_1y_1)\Big)+N\psi_3\Big(y_1\t\chi(x_1^{(2)})\Big).$$
\end{lemma}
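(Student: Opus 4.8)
The plan is to mimic the proof of Lemma~\ref{X.19}: by Remark~\ref{j11.1} it suffices to show that
$$d_1(w_1)\cdot\Big[\beta(y_1x_1\t x_1)+\beta(x_1^{(2)}\t y_1)-N\psi_3\big(x_1\t\chi(x_1y_1)\big)-N\psi_3\big(y_1\t\chi(x_1^{(2)})\big)\Big]=0$$
for all $w_1\in F_1$. I would compute the two summands $d_1(w_1)\cdot\beta(x_1^{(2)}\t y_1)$ and $d_1(w_1)\cdot\beta(y_1x_1\t x_1)$ separately. The first is handled directly by Lemma~\ref{!real-24.6} applied with $z_1=y_1$. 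For the second I would first record a polarized form of Lemma~\ref{!real-24.6}: replacing $x_1$ by $x_1+y_1$ in that lemma, using the divided power identity $(x_1+y_1)^{(2)}=x_1^{(2)}+x_1y_1+y_1^{(2)}$ together with the additivity of $\beta$ and $\chi$ and the bilinearity of $\psi_1$, and then subtracting off the two instances of Lemma~\ref{!real-24.6} corresponding to $x_1^{(2)}$ and $y_1^{(2)}$, one obtains
$$d_1(w_1)\,\beta(x_1y_1\t z_1)=\psi_4\big(\psi_1(x_1\t z_1)\psi_1(y_1\t w_1)\big)+\psi_4\big(\psi_1(y_1\t z_1)\psi_1(x_1\t w_1)\big)+Nd_1(z_1)\psi_3\big(w_1\t\chi(x_1y_1)\big).$$
Specializing this to $z_1=x_1$ gives the required expression for $d_1(w_1)\cdot\beta(y_1x_1\t x_1)$.

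I would also use the polarized form of the defining relation (\ref{B.1.5}) for $\chi$: since $(d_3\circ\chi)(x_1^{(2)})=\psi_1(x_1\t x_1)$, replacing $x_1$ by $x_1+y_1$ yields $(d_3\circ\chi)(x_1y_1)=\psi_1(x_1\t y_1)+\psi_1(y_1\t x_1)$. Now add the expression for $d_1(w_1)\cdot\beta(y_1x_1\t x_1)$ (with $z_1=x_1$) to the expression for $d_1(w_1)\cdot\beta(x_1^{(2)}\t y_1)$ from Lemma~\ref{!real-24.6}. In the resulting sum, rewrite the term $\psi_4\big(\psi_1(x_1\t x_1)\psi_1(y_1\t w_1)\big)$ using $\psi_1(x_1\t x_1)=(d_3\circ\chi)(x_1^{(2)})$, and combine the two terms $\psi_4\big(\psi_1(y_1\t x_1)\psi_1(x_1\t w_1)\big)$ and $\psi_4\big(\psi_1(x_1\t y_1)\psi_1(x_1\t w_1)\big)$ into $\psi_4\big((d_3\circ\chi)(x_1y_1)\cdot\psi_1(x_1\t w_1)\big)$ via the polarized $\chi$-relation. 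Each of these two terms now has the shape $\psi_4\big(\psi_1(u\t w_1)\cdot(d_3\circ\chi)(v)\big)$; apply (\ref{A.3}) to turn it into $-\psi_3\big((d_2\circ\psi_1)(u\t w_1)\t\chi(v)\big)$ and then (\ref{B.4}) to replace $(d_2\circ\psi_1)(u\t w_1)$ by $N\big(d_1(u)\cdot w_1-d_1(w_1)\cdot u\big)$. After this substitution, the terms carrying the factor $d_1(x_1)$ or $d_1(y_1)$ cancel in pairs, and what survives is exactly $d_1(w_1)\cdot\big[N\psi_3(x_1\t\chi(x_1y_1))+N\psi_3(y_1\t\chi(x_1^{(2)}))\big]$. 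An appeal to Remark~\ref{j11.1} then finishes the proof.

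The computation is mechanical; the only point requiring care is bookkeeping the two \emph{a priori} distinct values $\psi_1(x_1\t y_1)$ and $\psi_1(y_1\t x_1)$ — recall $\psi_1$ is not yet assumed alternating — and noticing that it is precisely their symmetric sum that occurs, so that the polarized relation $(d_3\circ\chi)(x_1y_1)=\psi_1(x_1\t y_1)+\psi_1(y_1\t x_1)$ lets (\ref{A.3}) and (\ref{B.4}) take over. This is the same mechanism as in Lemma~\ref{X.19}, applied after one polarization step, so I do not expect any serious obstacle beyond the bookkeeping.
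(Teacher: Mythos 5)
Your proposal is correct and follows essentially the same route as the paper's proof: reduce via Remark~\ref{j11.1} to an equality in the image of $d_1$, expand $d_1(w_1)\cdot\beta(x_1y_1\t x_1)$ and $d_1(w_1)\cdot\beta(x_1^{(2)}\t y_1)$ via (the polarization of) Lemma~\ref{!real-24.6}, group the $\psi_4$-terms using the polarized form of (\ref{B.1.5}) so that $d_3\circ\chi$ appears, and then apply (\ref{A.3}) followed by (\ref{B.4}) to make the unwanted terms cancel. The only cosmetic difference is that you state the polarized form of Lemma~\ref{!real-24.6} explicitly before specializing $z_1=x_1$, whereas the paper applies it directly and records the five resulting summands $S_1,\dots,S_5$.
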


\begin{proof} We apply Remark~\ref{j11.1} and show that proposed equality holds after both sides have been multiplied by $d_1(w_1)$, for each $w_1\in F_1$.
Apply Lemma~\ref{!real-24.6} twice to write
$$d_1(w_1)\cdot\Big[\beta(y_1x_1\t x_1)+\beta({x_1}^{(2)}\t y_1)\Big]=\sum_{i=1}^5S_i,$$ with
\begingroup\allowdisplaybreaks\begin{align*}
S_1={}&\psi_4\Big(\psi_1(x_1\t x_1)\cdot\psi_1(y_1\t w_1)\Big),\\
S_2={}&\psi_4\Big(\psi_1(y_1\t x_1)\cdot\psi_1(x_1\t w_1)\Big),\\
S_3={}&Nd_1(x_1)\cdot \psi_3\Big(w_1\t \chi(x_1y_1)\Big),\\
S_4={}&\psi_4\Big(\psi_1(x_1\t y_1)\cdot\psi_1(x_1\t w_1)\Big),\text{ and}\\
S_5={}&Nd_1(y_1)\cdot \psi_3\Big(w_1\t \chi(x_1^{(2)})\Big).\\
\end{align*}\endgroup
Observe that
\begingroup\allowdisplaybreaks\begin{align*}
S_2+S_4={}&\psi_4\Big((d_3\circ\chi)(x_1y_1)\cdot\psi_1(x_1\t w_1)\Big),&&\text{by (\ref{B.1.5}),}\\
{}={}&{}-\psi_3\Big(d_2\psi_1(x_1\t w_1)\t\chi(x_1y_1)\Big),&&\text{by (\ref{A.3}),}\\
{}={}&{}-S_3+Nd_1(w_1)\cdot  \psi_3\Big(x_1
\t\chi(x_1y_1)\Big),&&\text{by (\ref{B.4}).}
\end{align*}\endgroup
Similarly,
\begingroup\allowdisplaybreaks\begin{align*}
S_1={}&\psi_4\Big(d_3\chi(x_1^{(2)})\cdot\psi_1(y_1\t w_1)\Big),&&\text{by (\ref{B.1.5}),}\\
{}={}&{}-\psi_3\Big(d_2\psi_1(y_1\t w_1)\t\chi(x_1^{(2)})\Big),&&\text{by (\ref{A.3}),}\\
{}={}&{}-S_5+Nd_1(w_1)\cdot\psi_3\Big(y_1\t\chi(x_1^{(2)})\Big),&&\text{by (\ref{B.4}).}
\end{align*}\endgroup
Thus, $$\sum_{i=1}^5 S_i=d_1(w_1)\cdot 
\Big[N\psi_3\Big(x_1\t\chi(x_1y_1)\Big)+N\psi_3\Big(y_1\t\chi(x_1^{(2)})\Big)\Big],$$ and the proof is complete. \end{proof}

\begin{corollary}\label{X.27} Adopt the data of {\rm\ref{data4}} and the  homomorphism $r$
of Lemma~{\rm\ref{X.16}}. 
If $x_1$ and $y_1$ are in $F_1$, then 
$$d_1(x_1)\cdot r(x_1y_1)+d_1(y_1)r(x_1^{(2)})=2N\psi_3(x_1\t \chi(x_1y_1))+2N\psi_3(y_1\t \chi(x_1^{(2)})).$$
\end{corollary}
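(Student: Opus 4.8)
The statement is a direct bookkeeping consequence of the two results that precede it, so the plan is simply to combine them. The key observation is that Lemma~\ref{X.16} holds for an \emph{arbitrary} element $\theta_2\in D_2F_1$, not merely for elements of the form $x_1^{(2)}$; in particular it applies to the product $x_1y_1\in D_2F_1$ (where, as always in $D_2F_1$, $x_1y_1=y_1x_1$).

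First I would instantiate Lemma~\ref{X.16} twice. Taking $\theta_2=x_1y_1$ and $z_1=x_1$ gives
$$d_1(x_1)\,r(x_1y_1)=\beta(x_1y_1\t x_1)+N\psi_3\bigl(x_1\t\chi(x_1y_1)\bigr),$$
and taking $\theta_2=x_1^{(2)}$ and $z_1=y_1$ gives
$$d_1(y_1)\,r(x_1^{(2)})=\beta(x_1^{(2)}\t y_1)+N\psi_3\bigl(y_1\t\chi(x_1^{(2)})\bigr).$$
Adding these two identities, the left side is the left side of Corollary~\ref{X.27}, and the right side is
$$\bigl[\beta(x_1y_1\t x_1)+\beta(x_1^{(2)}\t y_1)\bigr]+N\psi_3\bigl(x_1\t\chi(x_1y_1)\bigr)+N\psi_3\bigl(y_1\t\chi(x_1^{(2)})\bigr).$$

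Next I would apply Lemma~\ref{X.75}, which says precisely that the bracketed sum of $\beta$-terms equals $N\psi_3\bigl(x_1\t\chi(x_1y_1)\bigr)+N\psi_3\bigl(y_1\t\chi(x_1^{(2)})\bigr)$. Substituting this in, the right side collapses to $2N\psi_3\bigl(x_1\t\chi(x_1y_1)\bigr)+2N\psi_3\bigl(y_1\t\chi(x_1^{(2)})\bigr)$, which is the desired equality. There is no real obstacle here: the only point requiring a moment's care is matching the slot conventions (which argument of $\beta$ lies in $D_2F_1$ and which in $F_1$) and noting the symmetry $x_1y_1=y_1x_1$ in $D_2F_1$, so that the $\chi(x_1y_1)$ appearing from Lemma~\ref{X.16} is the same element as the one produced by Lemma~\ref{X.75}.
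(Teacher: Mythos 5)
Your proof is correct and is essentially identical to the paper's: both apply Lemma~\ref{X.16} to the two pairs $(\theta_2,z_1)=(x_1y_1,x_1)$ and $(x_1^{(2)},y_1)$, add, and then invoke Lemma~\ref{X.75} to replace the sum of $\beta$-terms. The remark about $x_1y_1=y_1x_1$ in $D_2F_1$ is the right point to note when matching Lemma~\ref{X.75}.
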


\begin{proof} Observe that
\begin{align*}
&d_1(x_1)\cdot r(x_1y_1)+d_1(y_1)r(x_1^{(2)})\\
{}={}&\begin{cases}
\phantom{+}\beta(x_1y_1\t x_1)+N\psi_3(x_1\t \chi(x_1y_1))\\
+\beta(x_1^{(2)}\t y_1)+N\psi_3(y_1\t \chi(x_1^{(2)})),\end{cases}&&\text{by  Lemma~\ref{X.16},}\\
{}={}&\begin{cases}\phantom{+}2N\psi_3(x_1\t \chi(x_1y_1))\\+2N\psi_3(y_1\t \chi(x_1^{(2)})),\end{cases}&&\text{by  Lemma~\ref{X.75}}.\end{align*}
\vskip-24pt\end{proof}

\section{The induction step in the proof of Lemma~\ref{2^n}.}\label{Inductive step.}

The goal of this section is \ref{prove 2^n}, where  we prove Lemma~\ref{2^n}. 
The main step in this proof is carried out in Proposition~\ref{prop29.5}, where 
we 
start with data $(\psi_1,N,\chi)$, as described in \ref{data4}, with $\psi_1$ not an alternating map, and we 
 modify the 
 data in order to 
 produce 
data  $(\psi_1',N',\chi')$ with the property that $\psi_1'$ is closer to being an alternating map than $\psi_1$ is. 

The proof of Lemma~\ref{2^n} is inspired by, but significantly different than, the proof in \cite{KM80}.

\begin{data}\label{29.1} Adopt the data of {\rm\ref{data4}}. 
Assume further that 
there is a decomposition \begin{equation}F_1=G_1\p H_1\p I_1\label{29.1.b}\end{equation}  of $F_1$ 
as a direct sum of free $P$ modules, where $H_1$ has rank $1$,
\begin{enumerate}[\rm(a)] 
\item \label{29.5.5+}$\chi|_{D_2G_1}=0$, and 
\item\label{29.1.d} if $G_1$ is not zero, then there is an element $\zeta_1\in G_1$ with $d_1(\zeta_1)$ is regular on $P$.
\end{enumerate}
\end{data}

\begin{remark}\label{Rem29.3}If the hypotheses of \ref{29.1} are in effect, then
it follows from (\ref{B.1.5}) and \ref{29.1}.(\ref{29.5.5+}) that 
$$\psi_1(x_1\t x_1)=0\quad\text{and}\quad \psi_1(x_1\t y_1)+\psi_1(y_1\t x_1)=0,$$for all $x_1$ and $y_1$ in $G_1$.
\end{remark}

\begin{lemma}\label{L29.2}
Adopt the  Data of {\rm\ref{29.1}}. The following statements hold\,{\rm:}
\begin{enumerate}[\rm(a)]
\item\label{L29.2.a} $r|_{D_2G_1}=0$ and 
\item\label{L29.2.b} $
\psi_4(\psi_1(x_1\t w_1)\cdot \psi_1(x_1\t z_1))=0$, for all $x_1\in G_1$ and $z_1,w_1\in F_1$.
\end{enumerate}
\end{lemma}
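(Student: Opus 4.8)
The plan is to first prove the pointwise vanishing $r(x_1^{(2)})=0$ for every $x_1\in G_1$ (where $r$ is the homomorphism of Lemma~\ref{X.16}), and then to extract both assertions from it. If $G_1=0$ there is nothing to prove, so assume $G_1\ne 0$ and fix, by \ref{29.1}.(\ref{29.1.d}), an element $\zeta_1\in G_1$ with $d_1(\zeta_1)$ regular on $P$; since $F_4\cong P$, this element is also regular on $F_4$.

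For any $x_1\in G_1$ we have $\chi(x_1^{(2)})=0$ by \ref{29.1}.(\ref{29.5.5+}), so Corollary~\ref{X.20} collapses to $d_1(x_1)\cdot r(x_1^{(2)})=0$; taking $x_1=\zeta_1$ gives $r(\zeta_1^{(2)})=0$. I would then invoke Corollary~\ref{X.27} twice, always using that $\chi$ vanishes on all of $D_2G_1$, so the entire right-hand side of that corollary is $0$ whenever both divided-power arguments lie in $D_2G_1$. First, with $x_1=\zeta_1$ and $y_1\in G_1$ arbitrary: the term $d_1(y_1)\cdot r(\zeta_1^{(2)})$ is zero, so $d_1(\zeta_1)\cdot r(\zeta_1 y_1)=0$ and hence $r(\zeta_1 y_1)=0$ for all $y_1\in G_1$. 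Second, with $x_1=g$ an arbitrary element of $G_1$ and $y_1=\zeta_1$: the term $d_1(g)\cdot r(g\zeta_1)$ is zero by the previous step (recall $g\zeta_1=\zeta_1 g$ in $D_2F_1$), so $d_1(\zeta_1)\cdot r(g^{(2)})=0$ and therefore $r(g^{(2)})=0$. This establishes the pointwise claim.

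Part (a) now follows because $D_2G_1$ has a $P$-basis consisting of divided squares $g^{(2)}$ and products $g\cdot g'$ with $g,g'\in G_1$, and $r(g\cdot g')=r\big((g+g')^{(2)}\big)-r(g^{(2)})-r({g'}^{(2)})=0$ by the identity $(g+g')^{(2)}=g^{(2)}+g\cdot g'+{g'}^{(2)}$ from Conventions~\ref{2.1}. For part (b), fix $x_1\in G_1$ and $z_1,w_1\in F_1$; because $\chi(x_1^{(2)})=0$, Corollary~\ref{X.3} reduces to $d_1(w_1)\cdot d_1(z_1)\cdot r(x_1^{(2)})=\psi_4\big(\psi_1(x_1\t z_1)\cdot\psi_1(x_1\t w_1)\big)$, whose left-hand side vanishes by part (a). Since the product in $D_2F_2$ is symmetric, this says $\psi_4\big(\psi_1(x_1\t w_1)\cdot\psi_1(x_1\t z_1)\big)=0$, which is exactly (b).

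I expect the only real obstacle to be the double application of Corollary~\ref{X.27} in the second paragraph. The identity of Corollary~\ref{X.20} only yields $r(x_1^{(2)})=0$ when $d_1(x_1)$ is regular, and most elements of $G_1$ need not have regular image; the point is to parlay the single regular element $\zeta_1$ into vanishing for all of $D_2G_1$, first clearing the off-diagonal values $r(\zeta_1 y_1)$ and only then the diagonal values $r(g^{(2)})$, which works precisely because $\chi$ kills $D_2G_1$. The remaining steps are routine substitutions into identities already established in the preceding section.
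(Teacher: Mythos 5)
Your proof is correct, and for part (a) it takes a genuinely different route from the paper's. The paper establishes $r(\zeta_1^{(2)})=0$ exactly as you do (via Corollary~\ref{X.20}), but then proceeds by applying Corollary~\ref{X.3} with $x_1=\zeta_1$ to prove the vanishing of the quadratic expression $\psi_4(\psi_1(\zeta_1\t z_1)\cdot\psi_1(\zeta_1\t w_1))$, converting this via Remark~\ref{Rem29.3} to the form $\psi_4(\psi_1(z_1\t\zeta_1)\cdot\psi_1(w_1\t\zeta_1))=0$ for $z_1,w_1\in G_1$, and finally feeding that back into Corollary~\ref{X.3} with $z_1=w_1=\zeta_1$ and $x_1\in G_1$ arbitrary to isolate $d_1(\zeta_1)^2\cdot r(x_1^{(2)})$. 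Your argument instead applies Corollary~\ref{X.27} twice, first clearing the off-diagonal values $r(\zeta_1 y_1)$ and then the diagonal values $r(g^{(2)})$, which tracks $r$ directly and never needs to control the $\psi_4(\psi_1\cdot\psi_1)$ term nor invoke Remark~\ref{Rem29.3}. This is a modest simplification: you reach the conclusion of part (a) with fewer moving parts, at the cost of relying on Corollary~\ref{X.27} (which depends on Lemma~\ref{X.75}) rather than only Corollary~\ref{X.3}. For part (b), both proofs are identical, an application of Corollary~\ref{X.3} after (a) is in hand. One small imprecision in your write-up: what you describe as ``a $P$-basis consisting of divided squares $g^{(2)}$ and products $g\cdot g'$'' is really a generating set; the correct point, which your subsequent identity $g\cdot g'=(g+g')^{(2)}-g^{(2)}-{g'}^{(2)}$ already makes, is that the divided squares $\{g^{(2)}:g\in G_1\}$ generate $D_2G_1$ over $P$, and the homomorphism $r$ vanishes on all of them.
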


\begin{proof} 
If $G_1=0$, then (\ref{L29.2.a}) and  (\ref{L29.2.b}) automatically hold. Henceforth, we assume $G_1$ is not zero. According to Data~\ref{29.1}.(\ref{29.1.d}), there is an element $\zeta_1$ in $G_1$ with $d_1(\zeta_1)$ regular on $P$. We 
first prove some further properties of $\zeta_1$. 
Apply 
Corollary~\ref{X.20}, with $x_1$ replaced by $\zeta_1$, to see that
$$d_1(\zeta_1)r(\zeta_1^{(2)})=2N\psi_3\big(\zeta_1\t \chi(\zeta_1^{(2)})\big).$$The element $\chi(\zeta_1^{(2)})$ is zero by Data~\ref{29.1}.(\ref{29.5.5+}) and the element $d_1(\zeta_1)$ is regular on $F_4$ (which is isomorphic to $P$); therefore, we conclude that $r(\zeta_1^{(2)})=0$.
 Now apply Corollary~\ref{X.3}, with $x_1$ replaced by $\zeta_1$, to see that
$$d_1(w_1)\cdot d_1(z_1)\cdot r(\zeta_1^{(2)})=\begin{cases}
\phantom{+}\psi_4\Big(\psi_1(\zeta_1\t z_1)\cdot \psi_1(\zeta_1\t w_1)\Big)\\
+Nd_1(z_1)\cdot \psi_3\Big(w_1\t \chi(\zeta_1^{(2)})\Big)\\
+Nd_1(w_1)\cdot \psi_3\Big(z_1\t \chi(\zeta_1^{(2)})\Big).
\end{cases}$$
The elements $r(\zeta_1^{(2)})$ of $F_4$ and $\chi(\zeta_1^{(2)})$ of $F_3$ are zero. It follows that 
$$\psi_4\Big(\psi_1(\zeta_1\t z_1)\cdot \psi_1(\zeta_1\t w_1)\Big)=0,$$
for all $z_1$ and $w_1$ of $F_1$.
Apply Remark~\ref{Rem29.3} to conclude that
\begin{equation}\label{29.2.1}\psi_4\Big(\psi_1(z_1\t \zeta_1)\cdot \psi_1(w_1\t \zeta_1)\Big)=0,\end{equation}
for $z_1$ and $w_1$ in $G_1$.

Now we attack assertion (\ref{L29.2.a}).
Apply Corollary~\ref{X.3} again. This time, 
take $z_1$ and $w_1$ both to be $\zeta_1$ and let $x_1$  be an arbitrary element of $G_1$. Obtain
$$d_1(\zeta_1)\cdot d_1(\zeta_1)\cdot r(x_1^{(2)})=\begin{cases}
\phantom{+}\psi_4\Big(\psi_1(x_1\t \zeta_1)\cdot \psi_1(x_1\t \zeta_1)\Big)
\\
+Nd_1(\zeta_1)\cdot \psi_3\Big(\zeta_1\t \chi(x_1^{(2)})\Big)
\\
+Nd_1(\zeta_1)\cdot \psi_3\Big(\zeta_1\t \chi(x_1^{(2)})\Big)
\end{cases}$$
The top summand on the right is zero by (\ref{29.2.1}). The other two summands on the right are zero because $\chi(D_2(G_1))=0$ by Data~\ref{29.1}.(\ref{29.5.5+}). It follows that the element
$$d_1(\zeta_1)\cdot d_1(\zeta_1)\cdot r(x_1^{(2)})$$ of $F_4$ is zero. However, $d_1(\zeta_1)$ is regular on $F_4$; hence $r(x_1^{(2)})=0$ for all $x_1$ in $G_1$ and the restriction of $r$ to $D_2G_1$ is identically zero. This is assertion (\ref{L29.2.a}).

 For  assertion (\ref{L29.2.b}), apply Corollary~\ref{X.3} again.
If $x_1\in G_1$ and $z_1,w_1$ are in $F_1$, then
$$d_1(w_1)\cdot d_1(z_1)\cdot r(x_1^{(2)})=\begin{cases}
\phantom{+}\psi_4\Big(\psi_1(x_1\t z_1)\cdot \psi_1(x_1\t w_1)\Big)\\
+Nd_1(z_1)\cdot \psi_3\Big(w_1\t \chi(x_1^{(2)})\Big)\\
+Nd_1(w_1)\cdot \psi_3\Big(z_1\t \chi(x_1^{(2)})\Big).
\end{cases}$$ 
 The elements $r(x_1^{(2)})$ of $F_4$ and $\chi(x_1^{(2)})$ of $F_3$ are zero by (\ref{L29.2.a}) and Data~\ref{29.1}.(\ref{29.5.5+}). It follows that 
$$\psi_4\Big(\psi_1(x_1\t z_1)\cdot \psi_1(x_1\t w_1)\Big)=0.$$
This is assertion (\ref{L29.2.b}).
\end{proof}

\begin{proposition}
\label{prop29.5}Adopt Data~{\rm\ref{29.1}}. Let $N'$ be the integer $N'=2N^2$.
Then there exists a
homomorphism $\psi_1':T_2F_1\to F_2$ 
 such that 
\begin{equation}\label{29.4'}d_2\circ \psi_1'(x_1\t y_1)= N'\Big(d_1(x_1)\cdot y_1-d_1(y_1)\cdot x_1\Big),\end{equation}
\begin{equation}\label{29.5'}\psi_4\Big(\psi_1'(x_1\t d_2(x_2))\cdot x_2\Big)=N'(d_1(x_1))\cdot \psi_4( x_2^{(2)}), \quad\text{and}\end{equation}
\begin{equation}\label{29.5.5'} \psi_1'|_{\im D_2(G_1\p H_1)}=0. \end{equation}
\end{proposition}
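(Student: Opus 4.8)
\emph{The plan} is to realize $\psi_1'$ as a correction of $2N\psi_1$. The right-hand side of (\ref{29.4'}) is $2N$ times that of (\ref{B.4}), so $2N\psi_1$ already satisfies (\ref{29.4'}); and since $\Hom(T_2F_1,F)$ is acyclic, any other map satisfying (\ref{29.4'}) differs from $2N\psi_1$ by an element of $\ker\bigl(d_{2*}\colon\Hom(T_2F_1,F_2)\to\Hom(T_2F_1,F_1)\bigr)=\im d_{3*}$. So I seek $\psi_1'=2N\psi_1+d_3\circ\nu$ for a homomorphism $\nu\colon T_2F_1\to F_3$ to be chosen; then (\ref{29.4'}) holds for free. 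Writing $K_1=G_1\p H_1$ and using $\psi_1\circ\comult=d_3\circ\chi$ (a polarization of (\ref{B.1.5})) together with $\chi|_{D_2G_1}=0$, one finds $\psi_1'(\comult(\theta))=d_3\bigl(2N\chi(\theta)+\nu(\comult(\theta))\bigr)$ for $\theta\in D_2K_1$, so (\ref{29.5.5'}) will follow as soon as $\nu(\comult(\theta))=-2N\chi(\theta)$ on $D_2K_1$. Applying (\ref{B.5}) to the $2N\psi_1$ summand and (\ref{A.3}) to the $d_3\circ\nu$ summand of $\psi_4(\psi_1'(x_1\t d_2 x_2)\cdot x_2)$, the first contribution is exactly $N'd_1(x_1)\cdot\psi_4(x_2^{(2)})$, so (\ref{29.5'}) reduces to the single requirement that $\psi_3\bigl(d_2(x_2)\t\nu(x_1\t d_2(x_2))\bigr)=0$ for all $x_1\in F_1$ and $x_2\in F_2$.

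\emph{Building $\nu$.} I would set $\nu=\nu_0+\nu_1$. Take $\nu_0(x_1\t y_1)=-N\,\chi\bigl(\pi(x_1)\cdot\pi(y_1)\bigr)$, where $\pi\colon F_1\to K_1$ is the projection along $I_1$ and the dot is the divided-power product $K_1\t K_1\to D_2K_1$; since $\comult$ followed by this product is multiplication by $2$ and $\chi$ kills $D_2G_1$, this $\nu_0$ satisfies $\nu_0(\comult(\theta))=-2N\chi(\theta)$ on $D_2K_1$, so (\ref{29.5.5'}) is handled by $\nu_0$ alone. But $\nu_0$ fails the displayed requirement for (\ref{29.5'}); the leftover is a term of the shape $N\,\psi_3\bigl(d_2(x_2)\t\chi(\pi(x_1)\cdot\pi(d_2 x_2))\bigr)$, and disposing of it is where the rank-one hypothesis on $H_1$ enters. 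Fix a generator $h_1$ of $H_1$ and its coordinate functional $p\colon F_1\to P$; expand $\chi(\pi(x_1)\cdot\pi(d_2 x_2))$ into $\chi$-values on products $g\cdot h_1$ (with $g\in G_1$) and on $h_1^{(2)}$, weighted by $p$; and then define a second correction $\nu_1\colon T_2F_1\to F_3$ by prescribing $\psi_3(z_1\t\nu_1(x_1\t y_1))$ through a $P$-linear formula in $z_1$ assembled from $r$ (Lemma~\ref{X.16}), $p$ and $d_1$ --- this is a legitimate way to define a map into $F_3$ because the isomorphism $F_3\to F_1^\vee$ of (\ref{perfectp}) lets one specify elements of $F_3$ by their $\psi_3$-pairing. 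One arranges the formula so that $\nu_1$ annihilates $\comult(D_2K_1)$, leaving (\ref{29.5.5'}) intact, and exactly cancels the above leftover of $\nu_0$. Then $\psi_1'=2N\psi_1+d_3\circ(\nu_0+\nu_1)$.

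\emph{The crux, and the main obstacle.} What remains --- the analogue of Claim~\ref{29.1.14} --- is to verify that $\nu=\nu_0+\nu_1$ satisfies $\psi_3\bigl(d_2(x_2)\t\nu(x_1\t d_2 x_2)\bigr)=0$. This is an identity in $F_4\cong P$, so by Remark~\ref{j11.1} (recall $\im d_1$ has positive grade) it is enough to prove it after multiplying by $d_1(w_1)$ for arbitrary $w_1\in F_1$; one then rewrites everything, via (\ref{A.2}), (\ref{A.3}), (\ref{B.4}) and (\ref{B.5}), in terms of the identities for $r$ and $\chi$ collected in Section~\ref{Properties of the data.} --- decisively Corollary~\ref{X.27} --- together with $r|_{D_2G_1}=0$ and $\psi_4(\psi_1(x_1\t w_1)\cdot\psi_1(x_1\t z_1))=0$ for $x_1\in G_1$ from Lemma~\ref{L29.2}, discarding several terms by means of $d_1(d_2 x_2)=0$. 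I expect this verification to be the main obstacle: the delicate part is making the coordinate bookkeeping built into $\nu_1$ (the weights in $h_1$ and $p$) line up precisely with the two terms involving $r$ and $\chi$ on the two sides of Corollary~\ref{X.27}. One must also argue throughout by multiplying by elements of $\im d_1$ rather than by dividing, since $F_4\cong P$ need not be $2N$-torsion-free.
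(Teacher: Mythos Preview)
Your overall strategy is the same as the paper's: set $\psi_1'=2N\psi_1-d_3\circ\lambda$ (your sign convention is opposite, but that is cosmetic), observe that (\ref{29.4'}) is automatic, reduce (\ref{29.5'}) to the vanishing of $\psi_3\bigl(d_2(x_2)\t\lambda(x_1\t d_2(x_2))\bigr)$, reduce (\ref{29.5.5'}) to a compatibility between $\lambda\circ\comult$ and $2N\chi$ on $D_2K_1$, and appeal to Corollary~\ref{X.27} and Lemma~\ref{L29.2} for the key cancellations. All of these reductions are correct and match the paper exactly.

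Where your organization differs is in the shape of the correction. The paper does \emph{not} split $\lambda$ as a symmetric $\chi$–piece plus a remainder; instead it writes a single explicit five-term formula for $\psi_3\bigl(x_1\t\lambda(y_1\t z_1)\bigr)$, every term of which carries a prefactor $h_1^*(y_1)$. This asymmetry is the point: when one evaluates at $y_1=x_1$ and $z_1=x_1=d_2(x_2)$ for (\ref{29.5'}), the two $\chi$–terms in the formula cancel against each other and the $d_1(d_2x_2)$ term dies, leaving exactly the combination $2N\psi_3(y_1\t\chi(\smallint y_1))-d_1(y_1)r(\smallint y_1)$ with $y_1=\proj_{K_1}(d_2x_2)$, which is Claim~\ref{29.1.14} and is dispatched by Corollaries~\ref{X.20} and \ref{X.27} together with $r|_{D_2G_1}=\chi|_{D_2G_1}=0$. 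Your symmetric choice $\nu_0(x_1\t y_1)=-N\chi(\pi(x_1)\cdot\pi(y_1))$ does give $\nu_0\circ\comult=-2N\chi$ on $D_2K_1$, but its leftover for (\ref{29.5'}) is $-N\psi_3\bigl(d_2x_2\t\chi(\pi(x_1)\cdot\pi(d_2x_2))\bigr)$, which depends on the $G_1$–component of $x_1$ (through the term $h_1^*(d_2x_2)\cdot\chi(\proj_{G_1}(x_1)\cdot h_1)$), not merely on $p(x_1)=h_1^*(x_1)$. So $\nu_1$ cannot be assembled from $r$, $p$ and $d_1$ alone; it must also see $\proj_{G_1}$ and $\chi$. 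That is fixable, but it means your $\nu_1$ ends up reconstructing the asymmetric part of the paper's $\lambda$ anyway, and the bookkeeping is strictly harder than starting from the $h_1^*(y_1)$–prefactored formula.
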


\begin{proof}
 Let $\proj_{G_1\p H_1}:F_1\to G_1\p H_1$ be the projection map induced by the direct sum decomposition of $F_1$ which is given in (\ref{29.1.b}), $h_1$ be a basis element for $H_1$, $h_1^*:F_1\to P$ be the homomorphism which sends $G_1$ and $I_1$ to zero and $h_1$ to $1$, and $\int:F_1\to D_2F_1$ be the homomorphism
$${\ts\int} (x_1)=\begin{cases} x_1h_1&\text{if $x_1\in G_1\p I_1$}\\h_1^{(2)}&\text{if $x_1=h_1$}
.\end{cases}$$
Let 
$\NNY:T_2F_1\to F_3$ be the homomorphism defined by 
\begin{align}\label{29.5.4}&\psi_3\Big(x_1\t \NNY(y_1\t z_1)\Big)\\\notag{}={}& h_1^*(y_1)\begin{cases} 
+2N\cdot \psi_3\Big(x_1\t \chi\big((\int \circ\proj_{G_1\p H_1})(z_1)\big)\Big)\\
-2N\cdot \psi_3\Big(z_1\t \chi \big((\int \circ\proj_{G_1\p H_1})(x_1)\big)\Big)\\ 
+2N\cdot \psi_3\Big(\proj_{G_1\p H_1}(z_1)\t \chi\big((\int \circ\proj_{G_1\p H_1})(x_1)\big)\Big)\\
-d_1(\proj_{G_1\p H_1}(z_1))\cdot r\big((\int \circ\proj_{G_1\p H_1})(x_1)\big)\\
+d_1(z_1)\cdot r\big((\int \circ\proj_{G_1\p H_1})(x_1)\big),
\end{cases}\end{align}
and $\psi_1':T_2(F_1)\to F_2$ be the homomorphism  
$$\psi_1'=2N\psi_1-d_3\circ \NNY.$$

We first verify (\ref{29.4'}).  Observe that
\begingroup\allowdisplaybreaks
\begin{align*}d_2\circ \psi_1'(x_1\t y_1){}={}&(d_2\circ (2N\psi_1-d_3\circ \NNY))(x_1\t y_1)\\
{}={}&2N(d_2\circ \psi_1)(x_1\t y_1),&&\text{because $d_2\circ d_3=0$,}\\
{}={}&2N^2\Big(d_1(x_1)\cdot y_1-d_1(y_1)\cdot x_1\Big),&&\text{by (\ref{B.4}),}\\
{}={}&N'\Big(d_1(x_1)\cdot y_1-d_1(y_1)\cdot x_1\Big),\end{align*}\endgroup as desired.

Now we verify (\ref{29.5'}). Observe that
\begingroup\allowdisplaybreaks
\begin{align*}
&\psi_4\Big(\psi_1'(x_1\t d_2(x_2))\cdot x_2\Big)\\{}={}
&\psi_4\Big(
(2N\psi_1-d_3\circ \NNY)(x_1\t d_2(x_2))\cdot x_2\Big)\\{}={}
&\begin{cases}\phantom{+}2N\psi_4\Big(\psi_1(x_1\t d_2(x_2))\cdot x_2\Big)\\
-\psi_4\Big((d_3\circ \NNY)(x_1\t d_2(x_2))\cdot x_2\Big)\end{cases}\\{}={}
&\begin{cases}\phantom{+}2N\cdot 
N\cdot d_1(x_1)\cdot \psi_4(x_2^{(2)})\\
+\psi_3\Big(d_2(x_2)\t \NNY(x_1\t d_2(x_2))\Big),\end{cases}&&\text{by (\ref{B.5}) and (\ref{A.3}),}
\\{}={}
&\begin{cases}\phantom{+}N'\cdot d_1(x_1)\cdot \psi_4(x_2^{(2)})\\
+\psi_3\Big(d_2(x_2)\cdot \NNY(x_1\t d_2(x_2))\Big).\end{cases}
\end{align*}\endgroup
We complete the verification  of (\ref{29.5'}) by showing that
$$\psi_3\Big(d_2(x_2)\t \NNY(x_1\t d_2(x_2))\Big)=0.$$
Use (\ref{29.5.4}) to see that 
$$
  \psi_3\Big(d_2(x_2)\t \NNY(x_1\t d_2(x_2))\Big)$$$$= h_1^*(x_1)\begin{cases} 
+2N\cdot \psi_3\Big(d_2(x_2)\t \chi\big((\int \circ\proj_{G_1\p H_1})(d_2(x_2))\big)\Big)\\
-2N\cdot \psi_3\Big(d_2(x_2)\t \chi \big((\int \circ\proj_{G_1\p H_1})(d_2(x_2))\big)\Big)\\ 
+2N\cdot \psi_3\Big(\proj_{G_1\p H_1}(d_2(x_2))\t \chi\big((\int \circ\proj_{G_1\p H_1})(d_2(x_2))\big)\Big)\\
-d_1(\proj_{G_1\p H_1}(d_2(x_2)))\cdot r\big((\int \circ\proj_{G_1\p H_1})(d_2(x_2))\big)\\
+d_1(d_2(x_2))\cdot r\big((\int \circ\proj_{G_1\p H_1})(d_2(x_2))\big)
\end{cases}$$The top two summands add to zero. The bottom summand is zero.  Apply Claim \ref{29.1.14} to see that summands three and four add to zero.

\begin{claim-no-advance}\label{29.1.14}
If $y_1$ is in $G_1\p H_1$, then 
$$\ts 2N\cdot \psi_3(y_1\t \chi(\int y_1))
-d_1(y_1)\cdot r(\int y_1)$$ is equal to zero.\end{claim-no-advance} 

\ms\noindent{\it Proof of Claim~{\rm\ref{29.1.14}}.} It suffices to write $y_1=g_1+p h_1$, with $g_1\in G_1$ and $p\in P$ and to prove that
\begingroup\allowdisplaybreaks\begin{align}
0={}&\ts 2N\cdot \psi_3(g_1\t \chi(\int g_1))
-d_1(g_1)\cdot r(\int g_1),\label{first formula}\\
0={}&\ts \begin{cases}\phantom{+}2N\cdot \psi_3(g_1\t \chi(\int h_1))
-d_1(g_1)\cdot r(\int h_1)\\+2N\cdot \psi_3(h_1\t \chi(\int g_1))
-d_1(h_1)\cdot r(\int g_1),&\text{and}\end{cases}\label{second formula}\\
0={}&\ts 2N\cdot \psi_3(h_1\t \chi(\int h_1))
-d_1(h_1)\cdot r(\int h_1).\label{third formula}
\end{align}\endgroup

Formulas (\ref{third formula}) and (\ref{second formula}) are  immediate applications of Corollaries \ref{X.20} and 
\ref{X.27}, respectively. For formula (\ref{first formula}), apply Corollary~\ref{X.27} with
$x_1$ replaced by $g_1$ and $y_1$ replaced by $h_1$:
$$d_1(g_1)\cdot r(g_1h_1)+d_1(h_1)r(g_1^{(2)})=2N\psi_3(g_1\t \chi(g_1h_1))+2N\psi_3(h_1\t \chi(g_1^{(2)})).$$
Recall from Data~\ref{29.1}.(\ref{29.5.5+}) and Lemma~\ref{L29.2} that 
$\chi(g_1^{(2)})$  and  
$r(g_1^{(2)})$
are both zero. Thus $$d_1(g_1)\cdot r(g_1h_1)=2N\psi_3(g_1\t \chi(g_1h_1)),
$$ which is (\ref{first formula}). This completes the proof of Claim~\ref{29.1.14}. \hfill \qed 

\bs To complete the proof of Proposition~\ref{prop29.5}, we verify (\ref{29.5.5'}). Indeed, we show that $\psi_1'$ vanishes on $g_1\t g_1$, $h_1\t h_1$ and $g_1\t h_1+h_1\t g_1$, with $g_1$ in $G_1$.

\begin{claim-no-advance}\label{29.5.9} If $x_1$ and $y_1$ are in $F_1$ with $h_1^*(x_1)=0$, then $(d_3\circ \NNY)(x_1\t y_1)=0$. 
\end{claim-no-advance}

\ms\noindent{\it Proof of Claim~{\rm\ref{29.5.9}}.} It suffices to show that
\begin{equation}\label{29.5.10}\psi_4\Big(x_2\cdot (d_3\circ \NNY)(x_1\t y_1)\Big)=0.\end{equation} According to (\ref{A.3}) and (\ref{29.5.4}), the left side of (\ref{29.5.10}) is
$$-\psi_3\Big(d_2(x_2)\t \NNY(x_1\t y_1)\Big),$$ which is equal to $h_1^*(x_1)$ times an element of $F_4$.
This completes the proof of Claim~\ref{29.5.9}. \hfill \qed 

\bs The homomorphism $\psi'=2N\psi_1-d_3\circ \NNY$ vanishes on $g_1\t g_1$ because of Remark~\ref{Rem29.3} and Claim~\ref{29.5.9}.

\ms We show that $\psi_1'(h_1\t h_1)=0$ by showing that $\psi_4\Big(x_2\cdot \psi_1'(h_1\t h_1)\Big)=0$ for all $x_2\in F_2$. Recall that
\begin{align*}2N\psi_4\Big(x_2\cdot \psi_1(h_1\t h_1)\Big)={}&2N\psi_4\Big(x_2\cdot (d_3\circ\chi)(h_1^{(2)})\Big),&&\text{by (\ref{B.1.5}),}\\
{}={}&-2N\psi_3\Big(d_2(x_2)\t \chi(h_1^{(2)})\Big),&&\text{by (\ref{A.3}),}
\end{align*} and 
$$\psi_4\Big(x_2\cdot (d_3\circ \NNY)(h_1\t h_1)\Big)
=
-\psi_3\Big(d_2(x_2)\t \NNY(h_1\t h_1)\Big),$$by (\ref{A.3}).
It suffices to show that
\begin{equation}2N\psi_3\Big(d_2(x_2)\t\chi(h_1^{(2)})\Big)=\psi_3\Big(d_2(x_2)\t \NNY(h_1\t h_1)\Big).\label{29.5.11}\end{equation}
The definition of $\NNY$, which is given in (\ref{29.5.4}), yields that
\begingroup\allowdisplaybreaks \begin{align*} &\psi_3\Big(d_2(x_2)\t \NNY(h_1\t h_1)\Big)\\
{}={}&h_1^*(h_1)\begin{cases} 
+2N\cdot \psi_3\Big(d_2(x_2)\t \chi\big((\int \circ\proj_{G_1\p H_1})(h_1)\big)\Big)\\
-2N\cdot \psi_3\Big(h_1\t \chi \big((\int \circ\proj_{G_1\p H_1})(d_2(x_2))\big)\Big)\\ 
+2N\cdot \psi_3\Big(\proj_{G_1\p H_1}(h_1)\t \chi\big((\int \circ\proj_{G_1\p H_1})(d_2(x_2))\big)\Big)\\
-d_1\big(\proj_{G_1\p H_1}(h_1)\big)\cdot r\big((\int \circ\proj_{G_1\p H_1})(d_2(x_2))\big)\\
+d_1(h_1)\cdot r\big((\int \circ\proj_{G_1\p H_1})(d_2(x_2))\big).
\end{cases}\end{align*}\endgroup
The second and third summands add to become zero; as do the fourth and fifth summands. Thus, (\ref{29.5.11}) is established and $\psi_1'(h_1\t h_1)=0$.

\ms Finally, we show that $\psi_1'(g_1\t h_1+h_1\t g_1)=0$.
Recall that \begin{align*}
      2N\psi_4\Big(x_2\cdot \psi_1(g_1\t h_1+h_1\t g_1)\Big)
{}={}&2N\psi_4\Big(x_2\cdot (d_3\circ\chi)(h_1g_1)\Big),&&\text{by (\ref{B.1.5})},\\
{}={}&-2N\psi_3\Big(d_2(x_2)\t \chi(h_1g_1)\Big),&&\text{by (\ref{A.3})},\end{align*}
and \begin{align*}
&\psi_4\Big(x_2\cdot (d_3\circ \NNY)(g_1\t h_1+h_1\t g_1)\Big)\\
{}={}&
\psi_4\Big(x_2\cdot (d_3\circ \NNY)(h_1\t g_1)\Big),&&\text{by Claim \ref{29.5.9}},\\
{}={}&
-\psi_3\Big(d_2(x_2)\t \NNY(h_1\t g_1)\Big),&&\text{by (\ref{A.3})}.\end{align*}
It suffices to show that \begin{equation}\label{29.5.12}2N\psi_3\Big(d_2(x_2)\t \chi(h_1g_1)\Big)=\psi_3\Big(d_2(x_2)\t \NNY(
h_1\t g_1)\Big).\end{equation} 
The definition of $\NNY$, which is given in (\ref{29.5.4}), yields that
$$
\psi_3\Big(d_2(x_2)\t \NNY(h_1\t g_1)\Big)$$$$= h_1^*(h_1)\begin{cases} 
+2N\cdot \psi_3\Big(d_2(x_2)\t \chi\big((\int \circ\proj_{G_1\p H_1})(g_1)\big)\Big)\\
-2N\cdot \psi_3\Big(g_1\t \chi \big((\int \circ\proj_{G_1\p H_1})(d_2(x_2))\big)\Big)\\ 
+2N\cdot \psi_3\Big(\proj_{G_1\p H_1}(g_1)\t \chi\big((\int \circ\proj_{G_1\p H_1})(d_2(x_2))\big)\Big)\\
-d_1\big(\proj_{G_1\p H_1}(g_1)\big)\cdot r\big((\int \circ\proj_{G_1\p H_1})(d_2(x_2))\big)\\
+d_1(g_1)\cdot r\big((\int \circ\proj_{G_1\p H_1})(d_2(x_2))\big)
\end{cases}$$
The second and third summands add to become zero; as do the fourth and fifth summands. Thus, (\ref{29.5.12}) is established and $\psi_1'(g_1\t h_1+h_1\t g_1)=0$. The proof of Proposition~\ref{prop29.5} is complete.
 \end{proof}

\begin{chunk}
\label{prove 2^n}{\em The proof of Lemma~{\rm{\ref{2^n}}}.} Start with Data~\ref{A.1}.
Apply Observation~\ref{Y22.5} to obtain a map $\psi_1^{\la 0\ra}:T_2F_1\to F_2$ which is $2$-compatible with the data of \ref{A.1}. At this point, one has the data of \ref{B.1}. According to Remark~\ref{7.0}, there exists a homomorphism $\chi:D_2F_1\to F_3$ which satisfies (\ref{B.1.5}). At this point, one has the data of \ref{data4}. 

Apply Lemma~\ref{2.2} to obtain an element $h_1\in F_1$ and a complementary free summand $I_1$ such that $F_1=Ph_1\p I_1$ and $d_1(h_1)$ is a regular element of $P$. Take $H_1=Ph_1$ and $G_1$ to be zero. Observe that conditions (\ref{29.5.5+}) and (\ref{29.1.d}) of Data~\ref{29.1} are automatically satisfied. Indeed, all of Data~\ref{29.1} has been accumulated.

Apply Proposition~\ref{prop29.5} to produce a homomorphism $\psi_1':T_2F_1\to F_2$ and an integer $N'=2^{n'}$, for some positive integer $n'$, so that $\psi_1'$ is $N'$-compatible with the data of \ref{A.1} and $\psi_1'|_{\im D_2(G_1\p H_1)}=0$. Apply Remark~\ref{7.0} again to produce $\chi':D_2F_1\to F_3$, with $(d_3\circ \chi')(x_1^{(2)})=\psi_1'(x_1\t x_1)$ and $\chi'|_{D_2(G_1\p H_1)}=0$.

Pick a new decomposition $G_1'\p H_1'\p I_1'$ of $F_1$ with $G_1'$ equal to $G_1\p H_1$ and $\rank H_1'=1$. Apply Proposition~\ref{prop29.5} again to produce a homomorphism $$\psi_1'':T_2F_1\to F_2$$ and an integer $N''=2^{n''}$
so that $\psi_1''$ is $N''$-compatible with the data of \ref{A.1} and 
$\chi'|_{D_2(G_1'\p H_1')}=0$. Of course, $G_1\p H_1$ and $G_1'\p H_1'$ are both free summands of $F_1$; but
$$\rank(G_1\p H_1)+1=\rank(G_1'\p H_1').$$ One iterates the procedure 
to produce an alternating homomorphism $$\Psi_{1,2}:T_2F_1\to F_2$$ which is $2^m$-compatible with the data of \ref{A.1}, for some non-negative integer $m$.
 The proof of Lemma~\ref{2^n} is complete.\hfill \qed
\end{chunk}

\end{document}